\newtheorem{theorem}{Theorem}
\newtheorem{lemma}[theorem]{Lemma}
\newtheorem*{lemma*}{Lemma}
\newtheorem{proposition}[theorem]{Proposition}
\newtheorem*{fact*}{Fact}
\theoremstyle{definition}
\newtheorem{remark}[theorem]{Remark}
\newtheorem{definition}[theorem]{Definition}
\newtheorem*{example*}{Example}
\newtheorem{question*}[theorem]{Question}
\newtheorem{questions*}[theorem]{Questions}
\def\vint_#1{\mathchoice%
      {\mathop{\kern 0.2em\vrule width 0.6em height 0.69678ex depth -0.58065ex
              \kern -0.8em \intop}\nolimits_{\kern -0.4em#1}}%
      {\mathop{\kern 0.1em\vrule width 0.5em height 0.69678ex depth -0.60387ex
              \kern -0.6em \intop}\nolimits_{#1}}%
      {\mathop{\kern 0.1em\vrule width 0.5em height 0.69678ex depth -0.60387ex
              \kern -0.6em \intop}\nolimits_{#1}}%
      {\mathop{\kern 0.1em\vrule width 0.5em height 0.69678ex depth -0.60387ex
              \kern -0.6em \intop}\nolimits_{#1}}}
\def\vintslides_#1{\mathchoice%
      {\mathop{\kern 0.1em\vrule width 0.5em height 0.697ex depth -0.581ex
              \kern -0.6em \intop}\nolimits_{\kern -0.4em#1}}%
      {\mathop{\kern 0.1em\vrule width 0.3em height 0.697ex depth -0.604ex
              \kern -0.4em \intop}\nolimits_{#1}}%
      {\mathop{\kern 0.1em\vrule width 0.3em height 0.697ex depth -0.604ex
              \kern -0.4em \intop}\nolimits_{#1}}%
      {\mathop{\kern 0.1em\vrule width 0.3em height 0.697ex depth -0.604ex
              \kern -0.4em \intop}\nolimits_{#1}}}
\def\Z{\mathbf{Z}}
\def\reals{\mathbf{R}}
\def\N{\mathbf{N}}
\def\R{\reals}
\def\D{\mathbb{D}}
\def\C{\mathbf{C}}
\def\E{\mathbb{E}\,} 
\def\P{\mathbb{P}}
\def\Prob{\mathbb{P}}
\def\1{\mathbf{1}}
\renewcommand{\Re}{\operatorname{Re}}
\renewcommand{\Im}{\operatorname{Im}}
\newcommand{\weaklyprob}{\overset{w^*}{\underset{\Prob}{\longrightarrow}}} 
\newcommand{\inprob}{\underset{\Prob}{\longrightarrow}} 
\numberwithin{equation}{section}
\numberwithin{theorem}{section}
\def\enddef{\ensuremath{\hfill\diamond}}
\author{Janne Junnila}
\address{University of Helsinki, Department of Mathematics and Statistics, P.O. Box 68, FIN-00014 University of Helsinki, Finland}
\email{janne.junnila@helsinki.fi}
\author{Eero Saksman}
\address{Department of Mathematical Sciences, Norwegian University of Science and Technology (NTNU), NO-7491 Trondheim, Norway}
\address{University of Helsinki, Department of Mathematics and Statistics, P.O. Box 68, FIN-00014 University of Helsinki, Finland}
\email{eero.saksman@helsinki.fi}
\author{Christian Webb}
\address{Department of mathematics and systems analysis, Aalto University, P.O. Box 11000, 00076 Aalto, Finland}
\email{christian.webb@aalto.fi}
\begin{document}

\title[Decompositions of log-correlated fields]{Decompositions of log-correlated fields with applications}

\begin{abstract}
In this article we establish novel decompositions of Gaussian fields taking values in suitable spaces of generalized functions, and then use these decompositions to prove results about Gaussian multiplicative chaos.

We prove two decomposition theorems. The first one is a global one and says that if the difference between the covariance kernels of two Gaussian fields, taking values in some Sobolev space, has suitable Sobolev regularity, then these fields differ by a H\"older continuous Gaussian process. Our second decomposition theorem is more specialized and is in the setting of Gaussian fields whose covariance kernel has a logarithmic singularity on the diagonal -- or log-correlated Gaussian fields. The theorem states that any log-correlated Gaussian field $X$ can be decomposed locally into a sum of a H\"older continuous function and an independent almost $\star$-scale  invariant field (a special class of stationary log-correlated fields with 'cone-like' white noise representations). This decomposition holds whenever the term $g$ in the covariance kernel $C_X(x,y)=\log(1/|x-y|)+g(x,y)$ has locally $H^{d+\varepsilon}$ Sobolev smoothness. 

We use these decompositions to extend several results that have been known basically only for $\star$-scale invariant fields to general log-correlated  fields. These include the existence of critical multiplicative chaos, analytic continuation of the subcritical chaos in the  so-called inverse temperature  parameter $\beta$,
 as well as generalised Onsager-type covariance  inequalities which play a role in the study of imaginary multiplicative chaos.
 \end{abstract}

\maketitle

\section{Introduction}\label{sec:intro}

Gaussian multiplicative chaos (GMC) measures are random measures that can be formally thought of as the exponential of a log-correlated Gaussian field. They have connections to many models in mathematical physics such as 2d quantum gravity \cite{DuSh,DKRV,KRV}, SLE \cite{AJKS,DuSh2,Sheffield}, and random matrices \cite{W,BWW,LOS}, as well as to number theory \cite{SW}. A good review of Gaussian multiplicative chaos theory is given in \cite{RhVa}. 

Many of the key results in the theory are proven under rather strong assumptions on the field, which one would not expect to be required. The goal of this article is to partially rectify the situation via  new decomposition methods for the underlying log-correlated field.  Before describing our results in more detail, let us briefly recall  how GMC measures are constructed.

A typical construction of a GMC measure  goes  as follows: given a sequence of continuous approximations $X_n$ of a given log-correlated field $X$, i.e. a Gaussian field with a covariance kernel satisfying $\E X(x) X(y) = -\log |x-y| + O(1)$, one constructs a sequence of approximating measures
\[d\mu_n(x) = e^{\beta X_n(x) - \frac{\beta^2}{2} \E X_n(x)^2} \, dx,\]
where $\beta \in \R$ is a parameter. Then, under fairly general conditions for the approximations $X_n$, for $\beta \in (-\sqrt{2d}, \sqrt{2d})$ the sequence $\mu_n$ converges in probability in the weak$^*$-topology of Radon measures (see Definition \ref{def:weak*probability} for the definition of this concept), and the limiting measure $\mu$ is almost surely non-zero and singular with respect to the Lebesgue measure \cite{Kahane, RhVa,Berestycki,S}. The range $|\beta| < \sqrt{2d}$ of parameter values is called \emph{subcritical}, the case $|\beta| = \sqrt{2d}$ is called \emph{critical} and $|\beta| > \sqrt{2d}$ is \emph{supercritical}. For critical and supercritical $\beta$ the standard normalization scheme yields a limiting measure that is almost surely $0$.

In the critical $\beta = \sqrt{2d}$ case, there are two ways to modify the renormalization to obtain a non-trivial limiting measure. The first way is the so-called Seneta--Heyde normalization, where one looks at the sequence
\[d\mu^{(crit)}_n(x) = \sqrt{\E X_n(x)^2} d\mu_n(x),\]
where the extra factor $\sqrt{\E X_n(x)^2}$ blows up at just the right rate to yield a non-trivial limiting measure. The second approach (yielding the same limiting object  up to a deterministic multiplicative constant) is known as the derivative normalization, where one looks at the derivative of $\mu_n$ with respect to $\beta$ at $\beta=\sqrt{2d}$ and defines
\[{d}\mu'_n(x) = - {\left.\frac{d}{d\beta}\right|_{\beta=\sqrt{2d}}} {d}\mu_n(x) = -(X_n(x) - {\sqrt{2d}}\E X_n(x)^2) {d}\mu_n(x).\]
The existence and uniqueness of critical Gaussian chaos has been studied in various papers \cite{DRSV1,DRSV2,JS,P}. In particular, the existence was established basically only for so called $*$-scale invariant fields, which form a rather specific class of log-correlated fields. 

GMC measures are closely related to another model of random measures called multiplicative cascades. Multiplicative cascades are based on a tree structure, which makes the analysis of these models relatively simple due to the  independence relations that the tree structure induces. Independence properties are also  present to some degree  in certain very specific   approximations of log-correlated Gaussian fields -- in particular in the white noise type approximations of $\star$-scale invariant fields. The special structure of these fields has allowed one to prove in the GMC setting many important results that were already known for   cascades and whose proofs heavily depended on the cascade structure. Such results include the aforementioned existence of critical chaos \cite{DRSV1,DRSV2} and   analyticity of  subcritical chaos in the parameter $\beta$ \cite{AJKS}.

Our first decomposition theorem is a  general result stating that if the difference between the covariance kernels of two (possibly distribution valued) Gaussian fields is regular enough, then the fields can be constructed on the same probability space so  that they differ by a H\"older continuous Gaussian process only. One should note that this is trivial if the difference of the two covariances is a covariance in itself -- in this case the H\"older continuous process could be chosen to be independent of one of the fields, while the general result is less obvious. The precise statement of the theorem is the following -- for definitions of the relevant function spaces, see Section \ref{sec:spaces}.

\medskip

\noindent{\bf Theorem A}\quad {\sl For an exponent $\alpha\in \R$ and  a domain $U\subset \R^d$, let $X_1$ and $X_2$ be two centered Gaussian fields which are random elements in $H_{loc}^\alpha(U)$ with covariance kernels $C_1$ and $C_2$. Let us assume\footnote{The regularity assumption of $C_1$ and $C_2$ can be easily relaxed -- only the regularity of the difference $C_1-C_2$ is important. In fact, the same proof would work if we just assumed $C_1,C_2$ to be in some suitable space of generalized functions, but to avoid unnecessary abstraction, we focus on the case stated in the theorem.} that $C_1,C_2\in L_{loc}^1(U\times U)$ and that for some $\varepsilon>0$ one has $C_1-C_2\in H_{loc}^{d+\varepsilon}(U\times U)$.
	Then, for any bounded subdomain $U'$ with $\overline{U'}\subset U$,  we may construct copies of the fields ${X_1}$ and $X_2$ on a common probability space in such a way that 
	\[
	X_1=X_2+G \quad \textrm{on}\;\; U'
	\]
	for some Gaussian process $G$ which is almost surely H\"older continuous on $U'$.
}

\medskip

\noindent In this paper we use the result only in the setting of log-correlated fields, but we expect that the theorem might  turn out to be useful in other applications as well.
\smallskip

Our second decomposition theorem, which is more specialized in that it applies only to log-correlated fields and is only local in nature, has the benefit of constructing a H\"older continuous Gaussian process which is independent of one of the fields. We will later leverage this independence to prove analyticity of multiplicative chaos in the inverse temperature parameter and suitable Onsager inequalities for logarithmic covariances. The   theorem states that  locally we can write any log-correlated Gaussian field as a sum of a H\"older continuous function and a very special log-correlated field with particular scaling properties. We will refer to this class of special fields as \emph{almost $\star$-scale invariant ones} -- see Remark \ref{rem:0} for discussion about such objects. To avoid technical details, we state a restricted version of the theorem here -- for a more extensive version of the theorem, see Theorem \ref{th:deco} in Section \ref{se:deco}.

\medskip

\noindent{\bf Theorem B}\quad {\sl Assume that $X$ is a log-correlated field on a domain $U\subset\R^d$, whose covariance $C_X(x,y)=\log (1/|x-y|)+g(x,y)$ satisfies $g\in H^{{d+\varepsilon}}_{loc}(U\times U)$ for some $\varepsilon>0$  $($again, see Section {\rm \ref{sec:spaces}} for the definition of the local Sobolev space$)$. Then, any point $x_0 \in U$ has a neighbourhood in which  $X$ can be decomposed {\rm (}possibly on an extended probability space{\rm )} as
	$$
	X\;\;=\;\; L+R,
	$$
	where $L$ is an almost $\star$-scale invariant field $($see Remark {\rm \ref{rem:0}} for the notion$)$, and $ R$ is a regular Gaussian field with H\"older continuous realisations. Moreover,  $L$ and $R$ are independent.} 

\medskip

\noindent Theorem A would of course yield a more global version of such a result, but without the independence assumption. 

Theorem A is proven in Section \ref{se:deco1} and Theorem B is proven in Section~\ref{se:deco}. They have several strong corollaries, that are the the topic of the remaining sections of the paper. Namely,  we deduce the existence of critical chaos  (Theorem~\ref{th:critical} below) and analyticity (Theorem~\ref{th:holomorphic} below)  for a fairly general class of log-correlated fields. These have been known open questions and are interesting since in applications one meets mostly fields that are not of  $\star$-scale invariant type. The general result on analyticity also   implies strong  regularity of the dependence on $\beta$ and provides new  gateways for establishing convergence to chaos for real values of $\beta$. In addition, we prove a general Onsager-type covariance inequality in all dimensions (previously corresponding inequalities have been  proven in the case $g\equiv 0$ in dimension 2), which  is a key tool  for bounding the growth rate of moments of general imaginary Gaussian chaos in \cite{JSW}. We also expect  that the decomposition result might be helpful in studying the fine distribution of the maxima of general log-correlated fields, and also for analogous  extensions of the theory of supercritical chaos -- see Remark \ref{rem:scmax}.

\medskip

{\bf Acknowledgements:} The first author was supported by the the Doctoral Programme in Mathematics and Statistics at the University of Helsinki. The first and second authors were supported by the Academy of Finland CoE \lq Analysis and Dynamics\rq, as well as the Academy of Finland Project \lq Conformal methods in analysis and random geometry\rq. C.W. was supported by the Academy of Finland grant 308123. We wish to thank an anonymous reviewer for carefully reading the article, pointing out misprints, and making suggestions which have improved the quality of the article.

\section{Preliminaries}\label{sec:logcor}

\subsection{Log-correlated fields}\label{subse:log correlated}

A (distribution-valued) centered Gaussian process $X$ on a domain $U\subset \R^d$ with a covariance (kernel)
\begin{equation}\label{eq:cov}
C_X(x,y)=\E X(x)X(y)=\log |x-y|^{-1}+g(x,y),
\end{equation}
where $g\in C(U\times U)$ is  called a log-correlated field. Naturally then $C_X$ is  symmetric and positive semi-definite: $C_X(x,y)=C_X(y,x)$ and
$$
\int C_X(x,y) f(x) \overline{f(y)} \, dx \, dy \ge 0
$$ 
for all $f \in C^\infty_c(\R^d)$. Conversely, given such a covariance kernel, and assuming e.g. that 
\begin{align}\label{eq:assumptions}
\begin{cases} \quad g \in L^1(U \times U)\cap C(U\times U), \quad  g\;\;\textrm{is bounded from above in}\;\; U\times U, \quad\textrm{and}\\
\quad U\subset\R^d\quad \textrm{is a  bounded domain}.\end{cases}
\end{align}
one may easily prove the existence of a Gaussian field with covariance \eqref{eq:cov} and with nice regularity properties like a.s. $X\in H^s({\R^d})$ for any $s<0$, when the field is understood as zero outside of $U$ (see e.g \cite[Proposition 2.3]{JSW}). 
It will be  convenient to extend $C_X(x,y)$ to $\reals^d \times \reals^d$ by setting $C_X(x,y) = 0$ whenever $(x,y) \notin U \times U$. 

The Gaussian multiplicative chaos $``e^{\beta X}"$  is defined by replacing $X$ by suitable approximations $X_n$, which are  a.s. continuous Gaussian fields.  One exponentiates, renormalizes and then removes the smoothing by taking an appropriate limit in $n$. We refer to e.g. the review \cite{RhVa} for basic definitions and properties of multiplicative chaos. Usually the  approximating fields $X_n$ are given in terms of the problem under consideration, or often they are just standard mollifications of $X$. Most of the approximations one encounters have certain useful properties in common, that are described by the notion of a \emph{\lq standard approximation\rq}:

\begin{definition}[Standard approximation]\label{def:standard}
	Let the covariance $C_X$ be as in \eqref{eq:cov} and \eqref{eq:assumptions}. We say that a sequence $(X_n)_{n\geq1}$ of continuous jointly Gaussian centered fields on $U$ is a standard approximation of $X$ if it satisfies:
	
	\begin{itemize}[leftmargin=0.5cm]
		\item[(i)] One has
		$$
		\lim_{(m,n)\to \infty}\E X_m(x)X_n(y)=C_X(x,y),
		$$
		
		\noindent where convergence is in measure with respect to the Lebesgue measure on $U\times U$. 
		
		\item[(ii)] There exists a sequence $(c_n)_{n=1}^\infty$ such that $c_1\geq c_2\geq ...>0$, $\lim_{n\to\infty}c_n=0$,  and for every compact $K\subset U$
		$$
		\sup_{n\geq 1}\sup_{x,y\in K}\left|\E X_n(x)X_n(y)-\log \frac{1}{\max(c_n,|x-y|)}\right|<\infty.
		$$
		
		\item[(iii)] We have 
		$$
		\sup_{n\geq 1}\sup_{x,y\in U}\left[\E X_n(x)X_n(y)-\log\frac{1}{|x-y|}\right]<\infty.
		$$ \enddef
		
	\end{itemize}\end{definition}
	
	\smallskip
	
	A typical standard approximation is obtained by mollifications $ X_{\varepsilon_n}:=\psi_{\varepsilon_n}*X$,
	where $\psi\geq 0$ is a compactly supported smooth  test function with integral 1, $\psi_\varepsilon:=\varepsilon^{-d}\psi(\varepsilon^{-1}\cdot)$  and $\varepsilon_n\searrow 0$ as $n\to\infty$ (see e.g. \cite[Lemma 2.8]{JSW}).

	\subsection{Classical function spaces }\label{sec:spaces}

	We recall here the standard definition of $L^2$-based Sobolev spaces of smoothness index $s\in \R$. One sets
	\begin{equation}\label{eq:sobo}
	H^{s}(\R^d)=\left\lbrace \varphi\in \mathcal{S}'(\R^d): \|\varphi\|_{H^s(\R^d)}^2=\int_{\R^d}(1+|\xi|^2)^s \big|\widehat{\varphi}(\xi)\big|^2 \, d\xi <\infty\right\rbrace,
	\end{equation}
	where $\widehat{\varphi}$ stands for the Fourier transform of the tempered distribution $\varphi$ -- our convention for the Fourier transform is 
	$$
	\widehat{\varphi}(\xi)=\int_{\R^d}e^{-2\pi i \xi \cdot x}\varphi(x)dx
	$$
	for any Schwartz function $\varphi\in \mathcal{S}(\R^d)$. Some basic facts about the spaces $H^s(\R^d)$ 
	are e.g. that they are Hilbert spaces,  for $s>0$, $H^{-s}(\R^d)$ is the dual of $H^s(\R^d)$ with respect to the
	standard dual pairing, $H^s(\R^d)$ is a subspace of $C_0(\R^d)$ for $s>d/2$, i.e. there is a continuous 
	embedding into the space of continuous functions vanishing at infinity, and for $s<-d/2$, compactly 
	supported Borel measures (especially $\delta$-masses) are elements of $H^{s}(\R^d)$.

	Given a domain $U\subset\R^d$, a distribution $\lambda$ belongs to the local Sobolev space $H^s_{loc}(U)$ if for every test function $\varphi\in C_c^\infty(U)$ one has 
	$\varphi\lambda \in H^s(\R^d)$. Moreover, we say that $\lambda_j\to\lambda$ in $H^s_{loc}(U)$ if $\| \varphi(\lambda_j-\lambda)\|_{H^s(\R^d)}\to 0$  as $j\to\infty$ for any $\varphi\in C_c^\infty(U)$.
	
	We shall make use of the standard Sobolev embedding
	\begin{equation}\label{eq:six}
	\| f\|_{L^q(\R^{d})}\leq C'\|  f\|_{H^{s}(\R^{d})} \qquad \textrm{if}\quad \frac{s}{d}=\frac{1}{2}-\frac{1}{q},\quad s<d/2,
	\end{equation}
	and of the super-critical Sobolev embeddings (say for $\delta\in (0,1)$)
	\begin{equation}\label{eq:trois}
	\| f\|_{C^\delta (\R^{d})}\leq C'\|  f\|_{H^{d/2+\delta}(\R^{d})} \qquad \textrm{and}\quad 
	\| f\|_{C^{1+\delta}(\R^{d})}\leq C'\|  f\|_{H^{d/2+1+\delta}(\R^{d})}.
	\end{equation}
	Here for $\delta\in(0,1)$, $C^\delta(\R^d)$ denotes the space of $\delta$-H\"older continuous functions vanishing at infinity and $C^{1+\delta}(\R^d)$ the space of once differentiable functions vanishing at infinity whose derivatives are in $C^\delta(\R^d)$ and both spaces are endowed with their standard norms -- for a proof of the embeddings  and further details, see e.g. \cite[Section 2.8.1]{Tr}.
	
	We also  need a   basic result from  interpolation theory of function spaces: let $s_1, s_2,s'_1,s'_2\in\R$ with $s_1<s_2$ and $s'_1<s'_2$, and assume that the linear operator $T$ (perhaps originally defined only on say $C_c^\infty(\R^d)$) extends both to a bounded operator $T: H^{s_1}(\R^{d})\to H^{s'_1}(\R^{d})$ and to a bounded operator $T: H^{s_2}(\R^{d})\to H^{s'_2}(\R^{d})$. Then for any $\theta\in (0,1)$ the operator $T$ also extends to a bounded operator $T: H^{s}(\R^{d})\to H^{s'}(\R^{d})$, where $s=(1-\theta)s_1+\theta s_2$, $s'=(1-\theta)s'_1+\theta s'_2$ with the norm bound
	\begin{align}\label{eq:interpolation}
	&\| T\; : \; H^{s}(\R^{d})\to H^{s'}(\R^{d})\|\\ \;\leq \; &C\| T: H^{s_1}(\R^{d})\to H^{s'_1}(\R^{d})\|^{1-\theta}\| T: H^{s_2}(\R^{d})\to H^{s'_2}(\R^{d})\|^\theta \nonumber
	\end{align}
	(see \cite[Section 2.4]{Tr}). 
	Moreover, for a fixed function $f$ an application of  H\"older's inequality and definition \eqref{eq:sobo} yields 
	\begin{align}\label{eq:interpolation2}
	\| f \|_{H^{s}(\R^{d})}\;\leq \; \| f \|_{H^{s_1}(\R^{d})}^{1-\theta} \| f \|_{H^{s_2}(\R^{d})}^{\theta}.
	\end{align}
	
	In the proof of the analytic dependence of multiplicative chaos on the inverse temperature parameter it will be convenient to employ Hilbert space valued Hardy spaces. Thus, let $E$ be a separable Hilbert space, $\D$ the open unit disk, and $p\in (1,\infty)$. A  function $f:\D\to E$ belongs to the $E$-valued Hardy space $\mathcal{H}^p(\D,E)$ if $f$ is analytic, i.e. for all $e \in E$ the map $z \mapsto \langle f(z), e \rangle_E$ is analytic, and
	\[\|f\|_{\mathcal{H}^p(\D,E)} := \sup_{0 < r < 1} \Big( \int_0^{2\pi} \|f(r e^{it})\|_E^p \, dt \Big)^{1/p} < \infty.\]
	It is also easy to check by using the Cauchy integral formula that we have the uniform bound
	\begin{align}\label{eq:hardy bound}
	\sup_{|z|\leq 1/2}\|f(z)\|_E\leq C_p\|f\|_{\mathcal{H}^p(\D,E)}.
	\end{align}
	The space $\mathcal{H}^p(\D,E)$, $p \in (1,\infty)$, is reflexive and separable, and hence has the Radon--Nikodym property.
	This can be verified by an elementary argument, or one may deduce it from the general results of \cite{Bl}. One actually notes that $\mathcal{H}^p(\D,E)$ is isometrically isomorphic to  a closed a subspace  $M\subset L^p(\partial \D,E)$ via the Poisson extension. Here  $L^p(\partial \D,E)$ is the standard $E$-valued Lebesgue space, which is separable and reflexive for $p\in (1,\infty)$, and $M$ consists of those elements whose negative Fourier coefficients vanish.
	For general balls  $B\subset\C$, the function $f:B\to E$ belongs to the Hardy space $\mathcal{H}^p(B,E)$ if
	$f\circ \phi\in \mathcal{H}^p(\D,E)$, where $\phi:\D\to B$ is a  bijective affine map, $\phi(z)=az+b$.

	Finally to conclude this section, we record a simple approximation result in Sobolev spaces which is certainly a well-known fact, but for the reader's convenience we provide a proof.  
	\begin{lemma}\label{le:soboapprox}
		Let $K\in H^s(\R^{2d})$ for some $s\geq 0$ be symmetric  $(K(x,y)=K(y,x)$ for $x,y\in\R^d)$ and real-valued. Then for each $\varepsilon>0$, we can find a symmetric real-valued function $K_\varepsilon\in C_c^\infty(\R^{2d})$ such that $\|K-K_{\varepsilon}\|_{H^s}<\varepsilon$ and the integral operator on $L^2(\R^d)$ associated with the kernel $K_{\varepsilon}$ is of finite rank.
	\end{lemma}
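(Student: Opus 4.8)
The plan is to reduce first to a smooth, compactly supported kernel and then approximate such a kernel by a finite sum of tensor products, keeping realness and symmetry throughout.

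\emph{Reduction to the smooth case.} I would mollify $K$ with a standard radial mollifier and then multiply by a cutoff $\eta_R\in C_c^\infty(\R^{2d})$ that equals $1$ on a large ball; for $R$ large and the mollification scale small this yields $\tilde K\in C_c^\infty(\R^{2d})$ with $\|K-\tilde K\|_{H^s}$ as small as desired. That the cutoff does not spoil the approximation follows from the density of $C_c^\infty$ in $H^s$ together with the uniform boundedness of multiplication by $1-\eta_R$ on $H^s$ (arranged by bounding the $C^{\lceil s\rceil+1}$-norms of $1-\eta_R$). Since $K$ is real and the mollifier and cutoff are real, $\tilde K$ is real, and replacing it by $\tfrac12\big(\tilde K(x,y)+\tilde K(y,x)\big)$ makes it symmetric without leaving $C_c^\infty$ and without increasing the distance to $K$: indeed $K$ is symmetric and the coordinate swap $(x,y)\mapsto(y,x)$ is an orthogonal linear map of $\R^{2d}$, hence an isometry of $H^s(\R^{2d})$. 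So it suffices to approximate, in $H^s$, a smooth compactly supported real symmetric kernel by one of the same type which in addition is of finite rank.

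\emph{Finite-rank approximation.} Assuming now $K\in C_c^\infty(\R^{2d})$ real and symmetric, I would fix a cube $Q=[-M,M]^d$ with $\supp K$ contained in the interior of $Q\times Q$, and expand $K$ in its Fourier series on the torus $(\R/2M\Z)^{2d}$, say $K=\sum_{m,n\in\Z^d}c_{m,n}\,e_m(x)e_n(y)$ with $e_m(x)=(2M)^{-d/2}e^{\pi i m\cdot x/M}$; since $K$ is smooth the coefficients decay faster than any polynomial, so the truncations $S_N=\sum_{|m|_\infty,|n|_\infty\le N}c_{m,n}e_m\otimes e_n$ converge to $K$ in $C^k$ on $Q\times Q$ for every $k$. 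Picking $\chi\in C_c^\infty(\R^d)$ with $\chi\equiv1$ on the projection of $\supp K$ and $\supp\chi$ inside the interior of $Q$, set $K_N(x,y)=\chi(x)\chi(y)S_N(x,y)$. Then $K=(\chi\otimes\chi)K$ and $K-K_N=(\chi\otimes\chi)(K-S_N)$, so $K_N\to K$ in $C^k(\R^{2d})$ for all $k$, with all functions supported in the fixed compact set $\supp\chi\times\supp\chi$; hence $K_N\to K$ in $H^s$, using that for $g$ supported in a fixed compact set $\|g\|_{H^s}\le\|g\|_{H^{\lceil s\rceil}}\le C\|g\|_{C^{\lceil s\rceil}}$. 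Finally $K_N=\sum_{|m|_\infty,|n|_\infty\le N}c_{m,n}\,(\chi e_m)(x)(\chi e_n)(y)$ is a finite sum of products of an $L^2(\R^d)$-function of $x$ with one of $y$, so the associated integral operator has rank at most $(2N+1)^{2d}$; realness is preserved because $\overline{c_{m,n}}=c_{-m,-n}$ and the truncation index set is symmetric, and symmetry because $c_{m,n}=c_{n,m}$ and multiplication by $\chi\otimes\chi$ respects the swap. Combining the two steps with error budget $\varepsilon/2$ each gives the claim.

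\emph{Main obstacle.} There is no deep difficulty here; the only mildly delicate points are the two routine verifications above — that the cutoff $\eta_R$ does not destroy the $H^s$-approximation in the first step, and that $C^k$-convergence on a fixed compact set upgrades to $H^s$-convergence in the second step — together with the bookkeeping needed to carry realness, symmetry, compact support and finite rank simultaneously. The latter is handled cleanly once one notes that every reduction used (mollification, cutoff, Fourier truncation, multiplication by $\chi\otimes\chi$, and the final symmetrization) either preserves these properties outright or, in the case of symmetrization, is harmless because the coordinate swap is an $H^s$-isometry.
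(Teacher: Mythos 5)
Your proof is correct and follows essentially the same route as the paper's: reduce to a smooth, compactly supported symmetric kernel and then realize it, up to small $H^s$ error, as a cutoff times a finite trigonometric sum of tensor products, which gives finite rank. The only cosmetic difference is that the paper discretizes the truncated Fourier inversion integral $\int_{|\xi|\le R}\widehat K(\xi)\varphi_\delta\cos(\xi\cdot)\,d\xi$ by a Riemann sum, whereas you truncate a Fourier series on a large torus -- a variant the paper itself notes as an alternative quick proof.
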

	\begin{proof}
		One may obtain a very quick proof by applying a suitable wavelet decomposition of the given function (or  by working with Fourier series). However, we give here an argument that utilises  the very definition of the Sobolev norm.
		Let us begin by defining a function $K^{(R)}$ for $R>1$ by 
		\[
		K^{(R)}(x)=\int_{|\xi|\leq R}e^{2\pi i \xi \cdot x}\widehat K(\xi)d\xi= \int_{|\xi|\leq R}\cos(\xi x)\widehat K(\xi)d\xi ,
		\]
		by the symmetry of  $K$.  We also have that $K^{(R)}$ is real  since  $\widehat K$ is real.
		Obviously  $K^{(R)}\to K$ in $H^s(\R^{2d})$ as $R\to \infty$. Fix $  \varphi_0\in C_c^\infty(\R^d)$ non-negative with $\varphi_0(0)=1$,  define $\varphi_\delta \in C_c^\infty(\R^{2d})$ by setting $\varphi_\delta(x_1,x_2)=\varphi_0(\delta x_1)\varphi_0(\delta x_2)$ for $x_1,x_2\in\R^{d}$, and set $K^{(R,\delta)}:=\varphi_\delta K^{(R)}$. Then it is a classical fact (e.g. easily checked by using the density of smooth functions in $H^{s}$) that $K^{(R,\delta)}\to K^{(R)}$ in $H^s$ as $\delta\to 0$. In order to produce  a finite rank approximation we observe that  $\xi\mapsto g_\xi:=\varphi_\delta\cos(\xi \cdot)$ is a continuous map $\R^{2d}\to H^s(\R^{2d})$. If $s\in\N$, this follows easily by differentiating, and it thus holds for all $s$. The continuity of $\xi\mapsto g_\xi$ allows us to  approximate in a standard manner the integral  representation 
		$$
		K^{(R,\delta)}=\int_{|\xi|\leq R}\widehat K(\xi)g_\xi
		$$
		by a discrete sum $\sum_{n=1}^Na_ng_{\xi_n}$ with judiciously chosen $N$, $a_n\in\R$ and $\xi_n\in B(0,R)$. Finally,  symmetrisation produces the desired approximation kernel.
	\end{proof}

	\medskip
	
	We now turn to our first decomposition result.

	\section{Global decomposition of Gaussian fields: Proof of Theorem A}
	\label{se:deco1}
	
	We start by recalling some basic facts on covariances and integral operators    needed for our purposes here. Assume that $K\in L^2(\R^{2d})$ is real valued and symmetric. Then $K$ is the kernel of a self-adjoint Hilbert-Schmidt operator $T=T_K$ on $L^2(\R^d)$, and we also denote $K=K_T$. In particular, $T$ is compact and  basic spectral theory yields (due the fact that $K$ is real) the existence of orthogonal  and real-valued eigenfunctions $g_1,g_2,\ldots\in L^2(\R^d)$ and real eigenvalues $(\lambda_k)_{k\geq 1}$ such that the (square of the) Hilbert-Schmidt norm $\|T\|^2_{HS}:=\int_{\R^{2d}}K^2(x,y)dxdy=\sum_{k=0}^\infty  |\lambda_k|^2$ is finite. We may then write
	$$
	K(x,y)=\sum_{k=1}^\infty \lambda_kg_k(x)g_k(y).
	$$ 
	with convergence in $L^2(\R^{2d})$, or equivalently, $T=\sum_{k=1}^\infty \lambda_kg_k\otimes g_k$. Conversely, each such sum with real-valued orthonormal $g_k$:s and real-valued square-summable $(\lambda_k)_{k\geq 1}$ defines a Hilbert-Schmidt operator with a real and symmetric kernel. The kernel $K$ is called positive if $\int_{\R^{2d}}K(x,y)\varphi(x)\varphi(y)\geq 0$ for all (real-valued) test functions $\varphi\in C_c^\infty (\R^d)$, which is equivalent to  positivity $T\geq 0$ in the standard operator sense, again since $K$ is real and symmetric. 
	
	The absolute value of the operator $T$,  denoted by $|T|$,  is  the  operator with  kernel
	\begin{equation}\label{eq:absT}
	K_{|T|}(x,y):=\sum_{k=1}^\infty |\lambda_k|g_k(x)g_k(y).
	\end{equation}
	Actually, $|T|$ is the unique bounded operator such that $|T|\geq 0$ and $|T|^2=T^2.$
	The non-linear operation (sometimes called operator Hilbert transform) $T\mapsto |T|$ obviously satisfies
	$\| \, |T|\,\|_{HS}=\|T\|_{HS}$, or equivalently, it keeps the $L^2$-norm of the kernel invariant. By definition the operators
	$T^{\pm} :=(|T|\pm T)/2$ are  positive  and Hilbert-Schmidt.  The proof of Theorem A will be based on use of the decomposition $T=T^+-T^-$ in combination with  an auxiliary result stating that not just the $L^2$-norm, but also the smoothness property $K_T\in H^{s}(\R^{2d})$, $s>0$, remains intact under the operation $T\mapsto |T|:$
	\begin{lemma}\label{le:absreg}
		Let  $K$ be a symmetric and square-integrable kernel on $\R^d$, and denote by $T$ the corresponding operator on $L^2(\R^d)$. Assume that in addition $K\in H^s(\R^d\times \R^d)$ with $s>0$ Then also $K_{|T|}\in H^s(\R^d\times \R^d)$.
	\end{lemma}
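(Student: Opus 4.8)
The plan is to use the two factorisations of $|T|$ provided by the polar decomposition of the self-adjoint operator $T$: one of them to control the Sobolev smoothness of $K_{|T|}$ in the first variable, the other to control it in the second. I would first record an elementary pointwise inequality: for $s\geq 0$ and $(\xi,\eta)\in\R^d\times\R^d$,
\[
(1+|\xi|^2)^s+(1+|\eta|^2)^s\;\leq\;2\,(1+|\xi|^2+|\eta|^2)^s\;\leq\;2^{s+1}\big((1+|\xi|^2)^s+(1+|\eta|^2)^s\big),
\]
the last step using $1+|\xi|^2+|\eta|^2\leq 2\max(1+|\xi|^2,1+|\eta|^2)$. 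Writing $\Lambda_s^{(1)}$ and $\Lambda_s^{(2)}$ for the Fourier multipliers with symbols $(1+|\xi|^2)^{s/2}$ and $(1+|\eta|^2)^{s/2}$ acting on the first, resp.\ second, group of variables in $\R^{2d}$, this inequality says that a function $F\in L^2(\R^{2d})$ lies in $H^s(\R^{2d})$ if and only if both $\Lambda_s^{(1)}F\in L^2(\R^{2d})$ and $\Lambda_s^{(2)}F\in L^2(\R^{2d})$, with comparable norms. Hence it suffices to prove $\Lambda_s^{(1)}K_{|T|}\in L^2(\R^{2d})$ and $\Lambda_s^{(2)}K_{|T|}\in L^2(\R^{2d})$, and since $K_{|T|}$ is again symmetric, treating one of the two is enough.

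Next, with $\Lambda_s$ the Bessel potential multiplier $(I-\Delta)^{s/2}$ on $L^2(\R^d)$, I note that the composition $\Lambda_s T$ (on the left) has kernel $\Lambda_s^{(1)}K$ and $T\Lambda_s$ (on the right) has kernel $\Lambda_s^{(2)}K$ — using that $\Lambda_s$ is a real symmetric multiplier — and that a short Plancherel/Cauchy--Schwarz computation turns the hypothesis $K\in H^s(\R^{2d})$, via the equivalence above, into the statement that both $\Lambda_s T$ and $T\Lambda_s$ are Hilbert--Schmidt. Now from the spectral representation $T=\sum_k\lambda_k\, g_k\otimes g_k$ I introduce the sign operator $S:=\sum_k\sgn(\lambda_k)\, g_k\otimes g_k$, which is bounded and self-adjoint with $\|S\|\leq 1$, and I record the two factorisations $|T|=ST=TS$. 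Then $\Lambda_s|T|=(\Lambda_s T)S$ and $|T|\Lambda_s=S(T\Lambda_s)$, so in each case we compose a Hilbert--Schmidt operator with the bounded operator $S$; the products are therefore Hilbert--Schmidt, with Hilbert--Schmidt norms at most $\|\Lambda_s T\|_{HS}$ and $\|T\Lambda_s\|_{HS}$ respectively. Since the kernels of $\Lambda_s|T|$ and $|T|\Lambda_s$ are $\Lambda_s^{(1)}K_{|T|}$ and $\Lambda_s^{(2)}K_{|T|}$, we conclude $\Lambda_s^{(1)}K_{|T|},\Lambda_s^{(2)}K_{|T|}\in L^2(\R^{2d})$, hence $K_{|T|}\in H^s(\R^{2d})$, with a quantitative bound $\|K_{|T|}\|_{H^s(\R^{2d})}\leq C_s\|K\|_{H^s(\R^{2d})}$.

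The one genuinely substantive point is the idea of exploiting \emph{both} factorisations $|T|=ST$ and $|T|=TS$ simultaneously: a single factorisation only governs the smoothness of $K_{|T|}$ in one of its two variables. Everything else is soft. The mildly delicate — but entirely routine — step is the bookkeeping that identifies ``compose $T$ with $\Lambda_s$ on the appropriate side'' with ``apply $\Lambda_s$ to the appropriate variable of the kernel'', together with the check that these compositions are genuinely Hilbert--Schmidt operators rather than merely densely defined; both are handled by elementary Fourier analysis (essentially Plancherel in one group of variables), and I do not expect any real obstacle there.
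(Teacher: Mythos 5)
Your argument is correct, but it follows a genuinely different route from the paper's. The paper exploits the identity $|T|^2=T^2$: testing $T^2$ against the exponentials $e_\xi$ yields the exact identity $\int_{\R^d}|\widehat{K_{|T|}}(\xi,\eta)|^2\,d\eta=\int_{\R^d}|\widehat K(\xi,\eta)|^2\,d\eta$, hence $\|K_{|T|}\|_{H^s}\asymp\|K\|_{H^s}$; since $e_\xi\notin L^2$, this is first carried out for smooth, compactly supported, finite-rank kernels (via Lemma \ref{le:soboapprox}) and then transferred to general $K$ using the operator-Lipschitz continuity of $T\mapsto|T|$ in the Hilbert--Schmidt norm \cite{D,PS} together with a Banach--Alaoglu weak-limit argument. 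You instead use the polar-type factorisations $|T|=TS=ST$ with the contraction $S=\sum_k\sgn(\lambda_k)\,g_k\otimes g_k$, combined with the observation that $K\in H^s(\R^{2d})$ is equivalent (for $s\geq 0$) to the Bessel potential of order $s$ applied in either single group of variables landing in $L^2(\R^{2d})$, i.e.\ to $\Lambda_s T$ and $T\Lambda_s$ being Hilbert--Schmidt; composing with the bounded operator $S$ then gives the same for $|T|$ at once. This works directly for arbitrary $K\in H^s$, so you bypass the approximation lemma, the appeal to Davies' theorem, and the weak-compactness step --- a genuine simplification --- and you still get the quantitative bound $\|K_{|T|}\|_{H^s(\R^{2d})}\le C_s\|K\|_{H^s(\R^{2d})}$ (a two-sided comparison also follows by running the same argument with $T=S|T|=|T|S$, which is essentially what the paper's exact identity provides and later reuses in its approximation step). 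The routine points you flag --- that left/right composition with the Bessel potential corresponds to applying it in the first/second kernel variable, and that these compositions extend from a dense domain to honest Hilbert--Schmidt operators whose $L^2$ kernels agree with the distributional ones --- do go through by Plancherel in one group of variables, as you say. One minor expository wrinkle: your closing claim that both factorisations are genuinely needed sits oddly with your earlier (correct) remark that the symmetry of $K_{|T|}$ reduces everything to one variable; either mechanism suffices, so this is not a gap.
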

	\newcommand{\four}{\mathcal F}
	\begin{proof}
		We use the identity $|T|^2=T^2$ to relate the Sobolev norms of the kernels $K=K_T$ and $K_{|T|}$. Let us first assume that $K$  is smooth with compact support and $T$ is finite rank -- then the same holds for $K_{|T|}$ and $|T|$ according to \eqref{eq:absT} and by the very definition of eigenfunctions.  We denote by $\four_i$ the Fourier transform with respect to the variables $x_{d(i-1)+1},\ldots x_{d(i-1)+d}$, $i=1,2$, so that $\widehat K=\four_1\four_2K$, and we write $e_u(x)=e^{2\pi i u\cdot x}$ for any $u\in\R^d.$ Our assumptions on $K$ and $K_{|T|}$ imply that we may safely compute for any fixed $\xi\in\R^d$ 
		\begin{align*}
		\widehat{\big(T^2e_\xi\big)}(\xi)&=\int_{\R^d}e^{-2\pi i \xi \cdot x}\int_{\R^d\times \R^d}K(x,y)K(y,v)e^{2\pi i \xi\cdot v}dydvdx\\
		&=\int_{\R^d}(\four_1 K)(\xi,y)(\four_2 K)(y,-\xi)dy.
		\end{align*}
		Due to symmetry of $K$ we actually have $(\four_2 K)(y,-\xi)=\overline{(\four_1 K)(\xi,y)}$, whence
		\begin{equation}\label{eq:helppi3}
		\widehat{\big(T^2e_\xi\big)}(\xi)= \int_{\R^d}|(\four_1 K)(\xi,y)|^2dy=\int_{\R^d}|\widehat K(\xi,\eta)|^2d\eta,
		\end{equation}
		where the last equality follows from Parseval's theorem with respect to the variable $y$. Since $|T|^2=T^2$, performing the same computation for $K_{|T|}$ yields the equality
		$$
		\int_{\R^d}|\widehat K_{|T|}(\xi,\eta)|^2d\eta\; =\; \int_{\R^d}|\widehat K(\xi,\eta)|^2d\eta.
		$$

		By symmetry we have $\widehat{K}(\xi,\eta)=\widehat{K}(\eta,\xi)$, and similarly for $K_{|T|}$, so that 
		\[
		\int_{\R^d}|\widehat{K}(\xi,\eta)|^2d\xi=\int_{\R^d}|\widehat{K}_{|T|}(\xi,\eta)|^2d\xi  \quad \text{and} \quad \int_{\R^d}|\widehat{K}(\xi,\eta)|^2d\eta =\int_{\R^d}|\widehat{K}_{|T|}(\xi,\eta)|^2d\eta.
		\]
		This yields for any $s>0$
		\begin{equation}\label{eq:jelppi1}
		\int_{\R^d\times \R^d}|\widehat{K}(\xi,\eta)|^2(1+|\xi|^{2s}+|\eta|^{2s})d\xi d\eta=\int_{\R^d\times \R^d}|\widehat{K}_{|T|}(\xi,\eta)|^2(1+|\xi|^{2s}+|\eta|^{2s})d\xi d\eta.
		\end{equation}
		Since for $s\geq 0$ it holds that $(1+|\xi|^{2s}+|\eta|^{2s})\asymp (1+|\xi|^2+|\eta|^2)^s$  we see that the knowledge $K\in H^s(\R^d\times \R^d)$ implies that also $K_{|T|}\in H^s(\R^d\times \R^d)$. 
		
		In order to deal with the general case, we use Lemma \ref{le:soboapprox} to approximate a general $K\in H^s(\R^{2d})$ by a sequence of kernels $K_n$ which are smooth, have compact support, and the associated operators $T_{K_n}$ have finite rank. In particular, Lemma \ref{le:soboapprox} implies that $\|T_{K_n}-T\|_{\mathrm{HS}}=\|K-K_n\|_{L^2(\R^{2d})}\to 0$ as $n\to \infty$. We next make use of the fact fact that  $\| T_{K_n}-T_{K}\|_{HS}\to 0 $  implies $\| |T_{K_n}|-|T_{K}|\|_{HS}\to 0 $  according to \cite{D}
		(see also \cite{PS}, especially the discussion of the special case $f(t)=|t|$ on p. 376). Equivalently, $K_{|T_n|}\to K_{|T|}$ in $L^2(\R^{2d})$.
		
		We still need to argue that this implies  that $K_{|T|}\in H^s(\R^{2d})$. For this, we note that since $K_{T_n}\to K_{T}\in H^s(\R^{2d})$ and the first part of the proof of this lemma implies that $\|K_{|T_n|}\|_{H^s(\R^{2d})}\asymp \|K_{T_n}\|_{H^s(\R^{2d})}$, the sequence $\|K_{|T_n|}\|_{H^s(\R^{2d})}$ is bounded. Now by a standard Banach-Alaoglu argument, one can extract from $K_{|T_n|}$ a weakly convergent subsequence, with limit say $\widetilde K\in H^s(\R^{2d})$. But since we already know that $K_{|T_n|}\to K_{|T|}$ in $L^2(\R^{2d})$, a standard argument shows that we must have $\widetilde K=K_{|T|}$, which implies that $K_{|T|}\in H^s(\R^{2d})$ and concludes our proof.
	\end{proof}

	The rest of the proof will be divided into separate lemmas.

	\begin{lemma}\label{le:absreg2} 
		Under the assumptions of Theorem A there exist almost surely  H\"older continuous and centered Gaussian processes $G_\pm$ on $\R^d$ such that their covariances satisfy 
		$$
		C_{G_+}(x,y)- C_{G_-}(x,y)= C_1(x,y)-C_2(x,y) \quad \textrm{for}\;\; x,y\in U'.
		$$
	\end{lemma}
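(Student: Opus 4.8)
The plan is to obtain $G_+$ and $G_-$ as the centered Gaussian fields associated with the positive parts $T^{\pm}=(|T|\pm T)/2$ of a Hilbert--Schmidt operator $T$ whose kernel is a localised version of $C_1-C_2$. Since $\overline{U'}$ is compact and contained in $U$, fix $\varphi\in C_c^\infty(U)$ with $\varphi\equiv 1$ on a neighbourhood of $\overline{U'}$ and set
\[
K(x,y):=\varphi(x)\varphi(y)\big(C_1(x,y)-C_2(x,y)\big).
\]
Because $\varphi(x)\varphi(y)\in C_c^\infty(U\times U)$ and $C_1-C_2\in H^{d+\varepsilon}_{loc}(U\times U)$, the definition of the local Sobolev space gives $K\in H^{d+\varepsilon}(\R^{2d})$, and $K$ is real-valued and symmetric since $C_1,C_2$ are covariance kernels of real Gaussian fields and $\varphi(x)\varphi(y)$ is symmetric in $(x,y)$. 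In particular $K\in L^2(\R^{2d})$, so it is the kernel of a self-adjoint Hilbert--Schmidt operator $T=T_K$ on $L^2(\R^d)$.

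Next I would invoke Lemma \ref{le:absreg}, which yields $K_{|T|}\in H^{d+\varepsilon}(\R^{2d})$; consequently the kernels $K_{\pm}:=K_{T^{\pm}}=\tfrac12\big(K_{|T|}\pm K\big)$ of the operators $T^{\pm}=(|T|\pm T)/2$ also lie in $H^{d+\varepsilon}(\R^{2d})$. By construction each $T^{\pm}$ is a positive Hilbert--Schmidt operator, so each $K_{\pm}$ is a real, symmetric, positive semi-definite kernel. Applying the super-critical Sobolev embedding \eqref{eq:trois} on $\R^{2d}$ (where the threshold smoothness for continuity is $d=2d/2$, and $d+\varepsilon>d$) shows $K_{\pm}\in C^{\delta}(\R^{2d})$ for any $\delta<\min(\varepsilon,1)$; in particular each $K_{\pm}$ is a continuous, bona fide covariance kernel on $\R^d\times\R^d$. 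Let $G_+$ and $G_-$ be centered Gaussian fields on $\R^d$ with covariances $K_+$ and $K_-$; these exist by Kolmogorov's extension theorem. Since
\[
\E\big|G_{\pm}(x)-G_{\pm}(y)\big|^2=K_{\pm}(x,x)-2K_{\pm}(x,y)+K_{\pm}(y,y)\le C|x-y|^{\delta}
\]
by the $C^{\delta}$ bound on $K_{\pm}$, and the left-hand side controls every moment of the Gaussian increment, Kolmogorov's continuity criterion produces modifications of $G_{\pm}$ with almost surely (locally) $\delta'$-Hölder continuous sample paths for each $\delta'<\delta/2$.

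It remains only to verify the covariance identity: for $x,y\in U'$ we have $\varphi(x)=\varphi(y)=1$, hence $K(x,y)=C_1(x,y)-C_2(x,y)$, and therefore
\[
C_{G_+}(x,y)-C_{G_-}(x,y)=K_+(x,y)-K_-(x,y)=K_T(x,y)=C_1(x,y)-C_2(x,y).
\]
The only genuinely non-trivial ingredient is Lemma \ref{le:absreg}: a priori there is no reason for the Sobolev regularity of $C_1-C_2$ to survive the nonlinear passage $T\mapsto|T|$, and making this precise is the heart of the argument. Everything else is bookkeeping: one must keep in mind that the ambient dimension is $2d$, so that the hypothesis $H^{d+\varepsilon}$ is exactly what is needed to land in a Hölder class, and one accepts the harmless loss of a factor $1/2$ in the Hölder exponent when passing from a Hölder covariance to Hölder sample paths.
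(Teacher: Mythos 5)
Your proposal is correct and follows essentially the same route as the paper: localise $C_1-C_2$ by a cutoff $\varphi\otimes\varphi$ equal to $1$ near $\overline{U'}$, apply Lemma \ref{le:absreg} to get $H^{d+\varepsilon}$ regularity of the kernels of $T^{\pm}=(|T|\pm T)/2$, use the Sobolev embedding to make them H\"older continuous covariances, and realise $G_\pm$ with H\"older paths. The only (immaterial) difference is that you invoke Kolmogorov's continuity criterion directly, while the paper cites standard Gaussian regularity theory (Adler--Taylor) for the same step.
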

	
	\begin{proof} Let $\psi_0\in C^\infty_c(U)$ satisfy $\psi_0\equiv 1$ in a neighbourhood of $U'$, and note that  under the assumptions of Theorem A, the kernel $K(x,y):=\psi_0(x)\psi_0(y)\big(C_1(x,y)-C_2(x,y)\big)$ is an  element of $H^{d+\varepsilon}(\R^d\times \R^d)$. Denote by $T=T_K$ the operator on $L^2(\R^d)$ with kernel $K$. Lemma \ref {le:absreg}  then implies that the kernels $K_{\pm}$ of the positive and symmetric operators $T^\pm:=(|T|\pm T)/2$  also belong to the space $H^{d+\varepsilon}(\R^d\times \R^d)$. Due to the Sobolev embedding \eqref{eq:trois} they thus are H\"older continuous, and by standard regularity theory of Gaussian processes, see e.g. \cite[Theorem 1.3.5]{AT}, they are covariances of  centered Gaussian process $G_\pm$ on $\R^d$ with almost surely H\"older continuous realizations. The statement follows since we have $C_{G_+}- C_{G_-}= C_1-C_2$ on $U'\times U'$.
	\end{proof}

	\begin{lemma}\label{le:gconstr1} Let $A_k,A'_k$, $k\geq 1$ be jointly Gaussian centered variables. In a similar way, assume that  $B_k,B'_k$, $k\geq 1$  are jointly  Gaussian centred variables, but possibly defined on a different probability space, and assume that there is the equality of distributions  
		$$
		(A_k+A'_k)_{k\geq 1} \sim (B_k+B'_k)_{k\geq 1}.
		$$
		Then we may realize all the variables $A_k,A'_k,B_k,B'_k,$ $k\geq 1$ as jointly Gaussian variables on a common probability space in such a way that the distributions  of the double sequences $(A_k,A'_k)_{k\geq 1}$ and $(B_k,B'_k)_{k\geq 1}$ remain intact,  and at the same time there is the almost sure equality $A_k+A'_k=B_k+B'_k$ for all $k\geq 1.$
	\end{lemma}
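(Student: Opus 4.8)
The plan is to phrase everything in the language of Gaussian subspaces of $L^2$ and to ``glue'' the two families along their common sum. First I would set $\mathcal H_A\subset L^2(\Omega_A)$ and $\mathcal H_B\subset L^2(\Omega_B)$ to be the closed linear spans of $\{A_k,A'_k:k\ge1\}$ and $\{B_k,B'_k:k\ge1\}$; these are Gaussian subspaces, meaning every element is a centered Gaussian variable and joint Gaussianity of any countable subfamily is automatic. Inside them I single out the closed subspaces $\mathcal S:=\overline{\mathrm{span}}\{A_k+A'_k:k\ge1\}\subset\mathcal H_A$ and $\mathcal S':=\overline{\mathrm{span}}\{B_k+B'_k:k\ge1\}\subset\mathcal H_B$. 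The hypothesis $(A_k+A'_k)_k\sim(B_k+B'_k)_k$ is exactly the statement that these two sequences have the same covariance, so the assignment $B_k+B'_k\mapsto A_k+A'_k$ extends by linearity and continuity to an isometric isomorphism $\Upsilon:\mathcal S'\to\mathcal S$.

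Next I would build the common probability space. Let $P$ and $Q=I-P$ be the orthogonal projections of $\mathcal H_B$ onto $\mathcal S'$ and onto $\mathcal R:=\mathcal H_B\ominus\mathcal S'$. Take $\Omega=\Omega_A\times\Omega'$, where $\Omega'$ carries an i.i.d.\ standard Gaussian sequence indexed by an orthonormal basis of $\mathcal R$; this produces inside $L^2(\Omega)$ a copy of $\mathcal H_A$ together with an independent Gaussian subspace $\widetilde{\mathcal R}$ and an isometric isomorphism $V:\mathcal R\to\widetilde{\mathcal R}$. Then $\mathcal H:=\mathcal H_A\oplus\widetilde{\mathcal R}$ is again a Gaussian subspace of $L^2(\Omega)$, with $\mathcal S\perp\widetilde{\mathcal R}$. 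I define the linear map $\Phi:\mathcal H_B\to\mathcal H$ by $\Phi(b):=\Upsilon(Pb)+V(Qb)$. Because $\Upsilon,V$ are isometries on $\mathcal S',\mathcal R$, the summands $Pb,Qb$ are orthogonal, and $\Upsilon(Pb)\in\mathcal S\perp\widetilde{\mathcal R}\ni V(Qb)$, the map $\Phi$ is a linear isometry of $\mathcal H_B$ into the Gaussian subspace $\mathcal H$.

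With this in hand I would set $\widetilde B_k:=\Phi(B_k)$, $\widetilde B'_k:=\Phi(B'_k)$, and regard $A_k,A'_k$ as living in $L^2(\Omega)$ through the product structure. Now $A_k,A'_k,\widetilde B_k,\widetilde B'_k$ all lie in the single Gaussian subspace $\mathcal H$, so they are jointly Gaussian; the law of $(A_k,A'_k)_k$ is unchanged; and since an isometry between Gaussian subspaces preserves covariances and hence the joint law of any countable centered Gaussian family, $(\widetilde B_k,\widetilde B'_k)_k\sim(B_k,B'_k)_k$. Finally, $B_k+B'_k\in\mathcal S'$ forces $P(B_k+B'_k)=B_k+B'_k$ and $Q(B_k+B'_k)=0$, so
\[
\widetilde B_k+\widetilde B'_k=\Phi(B_k+B'_k)=\Upsilon(B_k+B'_k)=A_k+A'_k\qquad\text{a.s.}\quad(k\ge1),
\]
which is the asserted identity; renaming $\widetilde B_k,\widetilde B'_k$ back to $B_k,B'_k$ finishes the argument.

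The construction is essentially formal, so there is no genuine analytic obstacle; the one thing to get right is the bookkeeping with the projections $P,Q$ that makes $\Phi$ an isometry while simultaneously fixing the sums $B_k+B'_k$, together with the routine but worth-recording fact that an isometry of Gaussian $L^2$-subspaces transports joint Gaussian laws. A more hands-on variant avoids $\Phi$: one takes a regular Gaussian conditional law $\kappa(s,\cdot)$ of $((B_k,B'_k))_k$ given $((B_k+B'_k))_k=s$, writes it as $\mathrm{law}(m(s)+W)$ with $m$ linear and $W$ a fixed centered Gaussian independent of $s$, realizes $W$ on a factor independent of $\Omega_A$, and defines $\widetilde B_k:=m_k\big((A_\ell+A'_\ell)_\ell\big)+W_k$; since $(B+B')$ is a deterministic function of its own value, the conditional variance of $\widetilde B_k+\widetilde B'_k$ vanishes and $m_k+m'_k$ is the identity in the relevant coordinate, giving $\widetilde B_k+\widetilde B'_k=A_k+A'_k$ a.s., while $(A_k+A'_k)_k\sim(B_k+B'_k)_k$ yields the correct joint law of $(\widetilde B_k,\widetilde B'_k)_k$.
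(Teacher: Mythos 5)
Your argument is correct and is essentially the paper's own proof in slightly more concrete clothing: both glue the two Gaussian Hilbert spaces along the closed span of the sums via the isometry coming from the equality of covariances, send the orthogonal complements into orthogonal (independent) Gaussian subspaces, and use that isometries between Gaussian spaces preserve joint laws. The paper embeds both families into an abstract Gaussian space $G=G_0\oplus G_0^\perp$, while you keep the $A$-family in place and adjoin an independent factor for the complement of the $B$-side, but this is only a cosmetic difference.
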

	\begin{proof}
		Let $G=G_0\oplus G_0^\perp$ be a Gaussian (centred) Hilbert space, where both of  the mutually orthogonal subspaces $G_0$ and $G_0^\perp$ are infinite-dimensional. In turn denote by $M$ the Gaussian Hilbert space obtained as the closed $L^2$-span of all the variables $A_k,A'_k$, $k\geq 1.$ Let $M_0\subset M$ stand for the closed linear span of $A_k+A'_k$, $k\geq 1$, in $M$ so that $M=M_0+M_0^\perp$. Let $\Psi_A:M\to G$ be an isometric imbedding (not necessarily surjective) such that $\Psi_A(M_0)\subset G_0$ and $\Psi_A(M_0^\perp)\subset G_0^\perp.$ We may then pick an analogous isometric embedding $\Psi_B$ from the linear span of all the variables $B_k,B'_k$, $k\geq 1$, into $G$  such that $\Psi_A(A_k+A'_k)=\Psi_B(B_k+B'_k)$ for all $k\geq 1.$ Then the variables $\Psi_A (A_k),\Psi_A(A'_k), \Psi_B(B_k)$, and $\Psi_B(B'_k)$ have the desired properties.
	\end{proof}
	\begin{lemma}\label{le:gconstr}
		Under the assumptions of Theorem A and in the notation of Lemma \ref{le:absreg2}, we may construct jointly Gaussian copies of the fields $X_1, X_2, G_\pm$ on $U'$ on a common probability space so that almost surely
		\[
		X_1+G_-= X_2+G_+ \quad\textrm{on}\;\; U'.
		\]
	\end{lemma}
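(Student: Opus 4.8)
The plan is to leverage Lemma~\ref{le:gconstr1}: I would encode each field through its pairings with a fixed countable family of test functions, glue the resulting scalar sequences with that lemma, and then recover the fields from the countable data. To begin, realize $X_1$ together with an \emph{independent} copy of the Gaussian field $G_-$ furnished by Lemma~\ref{le:absreg2} on one probability space, and likewise $X_2$ together with an independent copy of $G_+$ on another (this uses only that $G_\pm$ are genuine a.s.\ H\"older continuous Gaussian fields, hence $H^\alpha_{loc}(U')$-valued after restriction). Restricting everything to $U'$, the covariance kernel of $X_1+G_-$ on $U'\times U'$ is $C_1+C_{G_-}$ and that of $X_2+G_+$ is $C_2+C_{G_+}$; by the conclusion of Lemma~\ref{le:absreg2} one has $C_{G_+}-C_{G_-}=C_1-C_2$ on $U'\times U'$, so these two kernels agree on $U'\times U'$.

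Next I would fix a sequence $(\varphi_k)_{k\ge1}\subset C^\infty_c(U')$ which is dense in $C^\infty_c(U')$, and set $A_k=\langle X_1,\varphi_k\rangle$, $A_k'=\langle G_-,\varphi_k\rangle$ on the first space and $B_k=\langle X_2,\varphi_k\rangle$, $B_k'=\langle G_+,\varphi_k\rangle$ on the second. These are centered and jointly Gaussian, and since the $\varphi_k$ are supported in $U'$ and the covariance kernels of the two sums agree on $U'\times U'$, the Gaussian sequences $(A_k+A_k')_{k\ge1}$ and $(B_k+B_k')_{k\ge1}$ have the same law. Lemma~\ref{le:gconstr1} then provides a single probability space carrying jointly Gaussian copies of all the variables $A_k,A_k',B_k,B_k'$, with the laws of $(A_k,A_k')_{k\ge1}$ and of $(B_k,B_k')_{k\ge1}$ unchanged and with $A_k+A_k'=B_k+B_k'$ almost surely for every $k$.

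Finally I would turn the sequences back into fields. Here one invokes the standard fact that a centered Gaussian random element $Y$ of the separable space $H^\alpha_{loc}(U')$ is a measurable function of the countable family $(\langle Y,\varphi_k\rangle)_{k\ge1}$ — for instance via its series expansion $Y=\sum_j e_j\xi_j$ (a.s.\ convergent, $e_j$ deterministic, $(\xi_j)_j$ an orthonormal basis of the Gaussian Hilbert space generated by $Y$), each $\xi_j$ being an $L^2$-limit of fixed linear combinations of the $\langle Y,\varphi_k\rangle$ because $(\varphi_k)_k$ is dense and $C_Y\in L^1_{loc}$. Applying this reconstruction to $X_1$, $G_-$, $X_2$, $G_+$ with the data $(A_k)_k$, $(A_k')_k$, $(B_k)_k$, $(B_k')_k$ on the common probability space yields $\widetilde X_1,\widetilde G_-,\widetilde X_2,\widetilde G_+$; since the joint laws of $(A_k,A_k')_k$ and of $(B_k,B_k')_k$ were preserved, the pair $(\widetilde X_1,\widetilde G_-)$ has the law of $(X_1,G_-)$ and $(\widetilde X_2,\widetilde G_+)$ that of $(X_2,G_+)$, all four are $L^2$ (hence jointly Gaussian) functionals of the family $(A_k,A_k',B_k,B_k')_k$, and $\langle\widetilde X_1,\varphi_k\rangle=A_k$, $\langle\widetilde G_-,\varphi_k\rangle=A_k'$, and similarly for $\widetilde X_2,\widetilde G_+$. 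Consequently $\langle\widetilde X_1+\widetilde G_-,\varphi_k\rangle=A_k+A_k'=B_k+B_k'=\langle\widetilde X_2+\widetilde G_+,\varphi_k\rangle$ for every $k$, and density of $(\varphi_k)_k$ in $C^\infty_c(U')$ upgrades this to the almost sure identity $\widetilde X_1+\widetilde G_-=\widetilde X_2+\widetilde G_+$ in $H^\alpha_{loc}(U')$. The step I expect to require the most care is precisely this passage to and from the countable coordinates — checking that a Gaussian field is faithfully encoded by $(\langle\cdot,\varphi_k\rangle)_k$ and recovered measurably while keeping joint Gaussianity — whereas the covariance computation and the application of Lemma~\ref{le:gconstr1} are routine.
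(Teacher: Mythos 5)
Your proposal is correct and follows essentially the same route as the paper: both arguments reduce to Lemma \ref{le:gconstr1} by noting that the independent sums $X_1+G_-$ and $X_2+G_+$ have equal covariance (via Lemma \ref{le:absreg2}), encoding everything in countable Gaussian coordinates, gluing, and reconstructing the fields. The paper merely chooses Fourier coefficients of the cutoff fields $\psi_0X_1,\psi_0X_2,G_\pm$ on a large cube instead of your pairings with a dense family of test functions, which makes the reconstruction step immediate, whereas your version needs the (valid) measurable-reconstruction argument you sketch.
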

	\begin{proof}
		By construction, just by computing covariances, $\psi_0X_1+G_-$ (where $G_-$ is independent of $X_1$) has the same distribution as $\psi_0X_2+G_+$ (where $G_+$ is independent of $X_2$).  Here $\psi_0$ is from the proof of Lemma \ref{le:absreg2}, and we view both of these sums as $H^{\min{(\alpha,0)}}(\R^d)$-valued random variables. Due to the compact support of $\psi_0$, we may pick a cube $Q_0=(-a,a]^{d}\subset\R^{d}$ so large that the support of $\psi_0$, and hence of all the fields we are considering here are contained already in the cube $\frac{1}{2}Q_0= (-a/2,a/2]^{d}.$ Develop the Gaussian field $ \psi_0 X_1$ into Fourier series
		$$
		\psi_0X_1 =\sum_{k\in \Z^d}A_{k}e_{k},
		$$
		where $(e_{k})_{k\in\Z^d}$ stands for the standard Fourier basis in the cube $Q_0$. In a similar vein,
		let  $(A'_{k})_{k\in\Z^d}$, $(B_{k})_{k\in\Z^d}$, and $(B'_{k})_{k\in\Z^d}$, respectively, stand for the Fourier coefficients of $G_-$, $\psi_0 X_2$, and $G_+$, respectively.  
		The claim is now an easy consequence of Lemma \ref{le:gconstr1}.
	\end{proof}
	
	\begin{proof}[Proof of Theorem A] The statement  follows immediately from  Lemma \ref{le:gconstr} by choosing $G:=G_+-G_-$ on $U'$. \end{proof}
	
	\begin{remark}\label{rem:gl_opt}
		As suggested to us by an anonymous referee, we now discuss the optimality of Theorem A, namely we discuss what happens  if we only assume that $C_1-C_2\in H_{loc}^{d}(\R^{2d})$. More precisely, we will provide an example of such a situation where the conclusion of Theorem A does not hold. Our example, which is in the case $d=1$, is built from the kernel 
		\[
		K(x)=\varphi(x)\log\log \frac{1}{|x|},
		\]
		where $\varphi:\R^2\to [0,\infty)$ is a smooth symmetric function which is non-zero at the origin and whose support is contained in the open unit ball of $\R^2$. One can then readily check that the gradient of $K$ is square integrable, so that $K\in H^{1}(\R^2)$. 
		
		Now $K=K(x,y)$ is symmetric, so the positive definite kernels $K_\pm$ -- that is the kernels of the operators $T_{|K|}\pm T_{K}$ -- are covariance kernels. Moreover, by Lemma \ref{le:absreg}, $K_\pm \in H^1(\R^2)$, and they of course satisfy $K_+-K_-=K$. Thus, since $K$ is unbounded, at least one of the kernels $K_+,K_-$ is also unbounded near the origin. Let $C_1$ be (one of) the unbounded one(s) and choose $C_2\equiv 0$. 
		
		Let $X_1$ be a centered Gaussian field with covariance $C_1$ and set $X_2\equiv0$. If $X_1$ were almost surely H\"older continuous, then $C_1$ would be continuous (after possibly modifying it in a set of Lebesgue measure zero).  This contradicts the unboundedness of $C_1$ at zero and shows that Theorem A does not hold in case we simply assume that $C_1-C_2\in H^{d}_{loc}(\R^{2d})$.
		
		It is an open question what happens if one replaces in Theorem A the scale $H^s$ by some other scale, for example the Besov spaces $B_{p,q}^s$, which also cover the H\"older-spaces.
		\hfill $\diamond$ 
	\end{remark}
	
	We now move on to proving our local decomposition.

	\section{Local decomposition of log-correlated fields: Proof of Theorem B}\label{se:deco}
	
	In this section we establish our basic result on local splitting of log correlated fields. We start by constructing a suitable split of $\star$-scale invariant fields. 
	
	\subsection{Decomposing $\star$-scale invariant fields}\label{subse:decomposing}
	
	Roughly speaking, a $\star$-scale invariant log-correlated Gaussian field { $Y$} has a translation invariant covariance { $C_Y$} of the form { $C_Y(x,y)=K(x-y)$} with the representation
	\begin{equation}\label{eq:logno}
	K(x):=\int_0^\infty k(e^{u}x)du,\quad x\not=0
	\end{equation}
	where $k(x-y)$ is a covariance  on $\R^d$, with some regularity, { $k(0) = 1$}, and $k$ has suitable decay at infinity, e.g. 
	\begin{equation}\label{eq:logno2}
	|k(x)|\lesssim  |x|^{-a}\qquad \textrm{for some}\quad a>0,
	\end{equation}
	which in particular makes $K$ well-defined. We call $k$ the `seed covariance function' of the construction.
	
	These fields are quite natural because  \eqref{eq:logno} implies that they possess a certain self-similar structure, and they appear  in the characterisation of $\star$-scale invariant measures \cite{ARV}.  Moreover, several basic results  related to multiplicative chaos have been previously established only in connection of these fields, including the construction of critical and supercritical chaos,  analytic continuation in the inverse temperature parameter, and sharp estimates for maxima  of log-correlated Gaussian fields \cite{AJKS,DRSV1,DRSV2,Madaule,MRV}. In the following auxiliary result we revisit the construction of such fields and, in particular, introduce a useful split of the constructed field { $Y=L+S$} into independent summands where $L$ is an `almost $\star$-scale invariant' field (see Remark \ref{rem:0} below for clarification of  our terminology here) and $S$ has H\"older continuous realisations. For our later purposes non-rotationally invariant $\star$-scale invariant covariance kernels  are not so useful, so in the what follows we will always assume rotational symmetry\footnote{Thus, the covariance kernel { $C_Y$} satisfies { $C_Y(x,y)=K(x-y)$}, where $K:\R^d\to\R$ is rotation symmetric.}.
	
	\begin{proposition}\label{pr:split} Fix $\delta\in (0,1)$ and assume that $k:\R^d\to\R$ is a rotation invariant and $\varepsilon$-H\"older-continuous function with some $\varepsilon>0$. Moreover, let $k$ be such that $(x,y)\mapsto k(x-y)$ yields a translation invariant covariance on $\R^d$. We also assume that $k(0)=1$ and $k$ satisfies the decay \eqref{eq:logno2}.  Then there is a constant $\varepsilon_0 > 0$ and  almost surely continuous centered {\rm (}jointly{\rm )} Gaussian fields {$(x,t)\mapsto L_t(x)$} and $(x,t)\mapsto S_t(x)$, indexed by $(x,t)\in \R^d\times (0,\infty)$   with the following properties:
		
		\smallskip
		
		\noindent {\rm (i)}\quad The fields { $L_{\{\mathbf{\cdot}\}}(\mathbf{\cdot})$} and 
		$S_{\{\mathbf{\cdot}\}}(\mathbf{\cdot})$ are independent of each other. In addition,
		for any $0\leq t_0<t_1<\ldots <t_n,$ the increment fields {$L_{t_j}-L_{t_{j-1}}$} on $\R^d$, $j=1,\ldots,n$, are independent, and the same is true for the increments of $S_{\{\mathbf{\cdot}\}}(\mathbf{\cdot})$. 
		For arbitrary  $x,x'\in\R^d$ and $t,t'\in(0,\infty)$ we have 
		\begin{align}\label{eq:covLR} 
		\begin{array}{lll}\E { L_t(x)L_{t'}(x')} &=& \int_0^{t\wedge t'}k\big(e^{u}(x-x')\big)(1-e^{-\delta u})du\qquad \textrm{and}\\ &&\\
		\E S_t(x)S_{t'}(x')&=& \int_0^{t\wedge t'}k\big(e^{u}(x-x')\big)e^{-\delta u}du \end{array}
		\end{align}
		\smallskip
		
		\noindent {\rm (ii)}\quad Almost surely the realizations {$(x,t)\mapsto L_t(x)$}
		and $(x,t)\mapsto S_t(x)$ are $\varepsilon_0$-H\"older continuous on any compact subset of $\R^d\times (0,\infty)$.
		
		\smallskip
		
		\noindent {\rm (iii)}\quad The field $S_{\{\mathbf{\cdot}\}}(\mathbf{\cdot})$ extends continuously to $\R^d\times [0,\infty]$. Moreover, the field $S:=S_\infty$ is almost surely  $\varepsilon_0$-H\"older continuous, and we have almost surely
		$$
		\|S_t-S\|_{C^{\varepsilon_0}(B)}\to 0\quad \textrm{as}\quad t\to\infty
		$$
		for any closed ball $B\subset\R^d$. The covariance $C_S$ is  H\"older continuous.
		
		\smallskip
		
		\noindent {\rm (iv)}\quad As $t\to\infty$, it holds that  $L_t \to L$ {almost surely}, where { $L$} is a log-correlated field on $\R^d$, and the convergence is in $H^{s}_{loc}(\R^d)$ for any $s<0$. The fields {$(L_t)_{t\geq 0}$} provide a standard approximation of $L$ in the sense of Definition \ref{def:standard}.
		
		\smallskip
		
		\noindent {\rm (v)}\quad For $t>0$ set {$Y_t:=L_t+S_t$}, and denote {$Y:=L+S$}. The field {$Y$} is a $\star$-scale invariant log-correlated field whose covariance kernel is obtained from \eqref{eq:logno}, and {$Y_t\to Y$} {almost surely} in $H^{s}_{loc}(\R^d)$ for any $s<0$ as $t\to\infty$. Moreover, the fields {$(Y_t)_{t\geq 0}$} provide a standard approximation of {$Y$}. 
		
		\smallskip
		
		\noindent {\rm (vi)}\quad Assume in addition that $k\in H^{s}_{loc}(\R^d)
		$ for some $s>d$. Then 
		the covariance {$C_Y$} satisfies 
		{
			$$
			C_Y(x,y)=\log(1/|x-y|)+ g(x,y),
			$$ 
		}
		where $g\in H^{s'}_{loc}(\R^{2d})$ with some $s'>d.$ 
		
	\end{proposition}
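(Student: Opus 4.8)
The plan is to build the pair $(L_t,S_t)$ from a single auxiliary white noise, read off the structural properties (i)--(v) by elementary second moment estimates, and obtain the Sobolev regularity (vi) from an exact scaling identity for the $\star$-scale invariant kernel combined with a Mellin-type Fourier estimate. For the construction underlying (i) and (ii), note first that for each fixed $u>0$ the functions $(1-e^{-\delta u})\,k(e^u\,\cdot)$ and $e^{-\delta u}\,k(e^u\,\cdot)$ are nonnegative multiples of rescalings of the covariance $k$, hence are themselves covariances on $\R^d$. Using Bochner's theorem to write $k(z)=\int_{\R^d}e^{2\pi i\xi\cdot z}\,\nu(d\xi)$ with $\nu$ a symmetric probability measure, I would realise on one probability space, via a complex Gaussian white noise on $\R^d\times(0,\infty)$ with intensity $\nu\otimes\mathrm{Leb}$, a field $\dot L$ that is white in $u$ with spatial covariance $(1-e^{-\delta u})k(e^u(x-x'))$, together with an independent analogue $\dot S$ with weight $e^{-\delta u}$; then $L_t(x):=\int_0^t\dot L(x,u)\,du$ and $S_t(x):=\int_0^t\dot S(x,u)\,du$ have the covariances \eqref{eq:covLR} by a direct computation. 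Independence of $L_{\{\cdot\}}(\cdot)$ and $S_{\{\cdot\}}(\cdot)$ is built in, and increments over disjoint $u$-intervals are functions of disjoint portions of the noise, hence independent, which is (i). For (ii), using $|k(0)-k(w)|\lesssim\min(|w|^\varepsilon,1)$ (H\"older continuity together with $|k|\le k(0)=1$, the latter being Cauchy--Schwarz for the covariance) one obtains, on any compact subset of $\R^d\times(0,\infty)$,
\[
\E\big(L_t(x)-L_{t'}(x')\big)^2\lesssim |t-t'|+\int_0^{\max(t,t')}\min\big(e^{u\varepsilon}|x-x'|^\varepsilon,1\big)\,du\lesssim |t-t'|+|x-x'|^\varepsilon,
\]
and likewise for $S$; being centered Gaussian fields, they then admit a.s.\ locally $\varepsilon_0$-H\"older continuous versions for a suitable $\varepsilon_0>0$ (one may take $\varepsilon_0<\min(\varepsilon,\delta)$, interpolating the bound $\min(|w|^\varepsilon,1)$ where needed), by the Kolmogorov continuity theorem.

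Turning to the limit $t\to\infty$ for (iii)--(v): since $|k|\le1$, the integral $C_S(x,y)=\int_0^\infty k(e^u(x-y))e^{-\delta u}\,du$ converges absolutely and its tail from $t$ to $\infty$ tends to $0$ in H\"older norm, giving the continuous extension $S_\infty=:S$ and H\"older continuity of $C_S$; the almost sure convergence $\|S_t-S\|_{C^{\varepsilon_0}(B)}\to0$ follows either by reparametrising $t$ so that $\R^d\times[0,\infty]$ becomes compact, or by a chaining/Borel--Cantelli argument for the Gaussian tail field $S-S_t$, which is (iii). For (iv), by rotational symmetry $k(z)=k_{\mathrm{rad}}(|z|)$, and substituting $v=e^u|z|$ gives $K(z)=\int_{|z|}^\infty k_{\mathrm{rad}}(v)\,\frac{dv}{v}=\log(1/|z|)+g(z)$ with $g$ bounded and continuous (using $|k_{\mathrm{rad}}(v)-1|\lesssim v^\varepsilon$ near $0$ and the decay \eqref{eq:logno2} at infinity), so $C_L=K-C_S$ is log-correlated; moreover $(L_t)_{t\ge0}$ is an $L^2$-bounded, $H^s_{loc}(\R^d)$-valued martingale (independent mean-zero increments, with $C_{L_t}(x,y)\le\log(1/|x-y|)+C$ uniformly in $t$ on bounded sets), hence converges almost surely, and the convergence is in $H^s_{loc}$ for $s<0$. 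Verifying that $(L_t)_{t\ge0}$ --- and then, with $Y_t:=L_t+S_t$ and $Y:=L+S$, the family $(Y_t)_{t\ge0}$, which has covariance $\int_0^{t}k(e^u(x-y))\,du$ --- is a standard approximation in the sense of Definition~\ref{def:standard} amounts to re-running these one-line estimates with $c_t=e^{-t}$; since $C_Y(x,y)=\int_0^\infty k(e^u(x-y))\,du$ is of the form \eqref{eq:logno}, this yields (v), with $L$ ``almost $\star$-scale invariant'' in the sense of Remark~\ref{rem:0}.

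The Sobolev regularity (vi) is the one genuinely non-routine point. The trivial identity $K(e^{-t}z)-K(z)=\int_0^t k(e^{-w}z)\,dw$ (a shift of variable in the defining integral), combined with $K(w)=\log(1/|w|)+g(w)$, yields after letting $t\to\infty$ --- the $t$ and logarithmic terms cancel and $\int_0^\infty(k(e^{-w}z)-1)\,dw$ converges since $|k(e^{-w}z)-1|\lesssim\min(e^{-w\varepsilon}|z|^\varepsilon,1)$ --- the clean formula
\[
g(z)=g(0)-\int_0^\infty\big(k(e^{-w}z)-1\big)\,dw ,\qquad C_Y(x,y)=\log(1/|x-y|)+g(x-y).
\]
Fixing a bounded region, replacing $k$ by $\chi k$ for a cutoff $\chi\equiv1$ there, and subtracting a radial bump to remove the value $k(0)=1$, one reduces to showing that $h(z):=\int_0^1\psi(\lambda z)\,\frac{d\lambda}{\lambda}$ lies in $H^{s'}_{loc}(\R^d)$ for some $s'\in(d,s)$, where $\psi\in H^s(\R^d)$ is compactly supported with $\psi(0)=0$ (so $\psi\in L^\infty$ and $\widehat\psi\in L^1$, as $s>d$). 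Here $\widehat h(\xi)=\int_1^\infty\rho^{d-1}\widehat\psi(\rho\xi)\,d\rho$, and a Cauchy--Schwarz split that places half of the polar weight $\rho^{(d-1)/2}$ and the full decay factor $(1+\rho|\xi|)^{-s}$ in one factor --- kept finite because $\psi\in H^s$ --- and the remainder in the other gives $\big\| |\xi|^{s'}\widehat h\big\|_{L^2(|\xi|\ge1)}\lesssim\|\psi\|_{H^s}$ for every $s'<s$, while a Littlewood--Paley decomposition disposes of the harmless low frequencies. Finally a transfer lemma --- localise, observe that the Fourier transform of $(x,y)\mapsto F(x-y)$ is carried by the anti-diagonal, and convolve against the Schwartz transform of the cutoff, using a Young--Schur bound --- shows that $F\in H^{s'}_{loc}(\R^d)$ implies $(x,y)\mapsto F(x-y)\in H^{s'}_{loc}(\R^{2d})$ with the same exponent; applying this to $g$ gives $g\in H^{s'}_{loc}(\R^{2d})$ with $s'\in(d,s)$, which is (vi).

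The heart of the matter is (vi): the naive estimate gives only $g\in H^{s'}$ for $s'<d/2$, and pushing past the threshold $d$ forces one to exploit the full $H^s$-smoothness of $k$ (not merely its H\"older continuity) through a correctly balanced weighted Fourier estimate. A lesser technical nuisance is upgrading the $t\to\infty$ convergence of $S_t$ to hold almost surely in H\"older norm, as needed in (iii); everything else is bookkeeping.
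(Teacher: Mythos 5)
Your proposal is correct and follows the paper's architecture for (i)--(v) fairly closely, but it diverges in two genuinely different ways. First, for the standard-approximation bounds in (iv)--(v) you verify Definition \ref{def:standard} directly from the radial formula $K_t(z)=\int_{|z|}^{|z|e^t}k_{\mathrm{rad}}(v)\,v^{-1}dv$ together with $|k_{\mathrm{rad}}(v)-1|\lesssim v^{\varepsilon}$ and the decay \eqref{eq:logno2}, whereas the paper compares with the explicit seed $k^{(0)}(x)=(1+|x|^2)^{-(d+1)/2}$, for which $K^{(0)}_t$ is computable in closed form, and then transfers the estimates; your direct route is, if anything, more self-contained, while the paper's comparison field makes the bookkeeping near the diagonal transparent. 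Second, and more substantially, for (vi) the paper computes $\widehat K(\xi)$ exactly (using radiality of $\widehat k$ and $\int\widehat k=k(0)=1$), identifies the Fourier transform of $\log(1/|\cdot|)$, and bounds the remainder by a Cauchy--Schwarz tail estimate on $\int_{|z|>|\xi|}\widehat k$, whereas you cancel the logarithm on the physical side via the scaling identity $g(z)=g(0)-\int_0^1(k(\lambda z)-1)\lambda^{-1}d\lambda$, reduce by a cutoff and a smooth radial bump to $h(z)=\int_0^1\psi(\lambda z)\lambda^{-1}d\lambda$ with $\psi\in H^s$ compactly supported and $\psi(0)=0$, and estimate $\widehat h(\xi)=\int_1^\infty\rho^{d-1}\widehat\psi(\rho\xi)\,d\rho$ by a weighted Cauchy--Schwarz; I checked that your split does give $\||\xi|^{s'}\widehat h\|_{L^2(|\xi|\ge1)}\lesssim\|\psi\|_{H^s}$ for every $s'<s$, which matches the range the paper obtains, and your localization (the formula only sees $k$ on $B(0,R)$ for $|z|\le R$) plays the same role as the paper's reduction to compactly supported $k$. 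You also sketch a proof of the transfer step $F\in H^{s'}_{loc}(\R^d)\Rightarrow F(\cdot-\cdot)\in H^{s'}_{loc}(\R^{2d})$, which the paper simply asserts as readily checked. The remaining differences are cosmetic or harmlessly loose: the white-noise realisation of $(L_t,S_t)$ versus the paper's abstract existence from positive definiteness (the paper's Remark \ref{rem:0} records the same white-noise picture); your claimed H\"older exponent $\varepsilon_0<\min(\varepsilon,\delta)$ should really be of order $\varepsilon/2$ from the Kolmogorov/Gaussian criterion, which is immaterial since only some $\varepsilon_0>0$ is asserted; the step from the uniform bound $C_{L_t}\le\log(1/|x-y|)+C$ to $L^2$-boundedness of the $H^s$-valued martingale implicitly uses the Riesz-kernel inequality that the paper imports from \cite{JSW}; and the pointwise formula for $\widehat h$ needs the routine approximation $\int_{1/N}^1$ to be justified distributionally. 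None of these affects the validity of the argument.
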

	\begin{proof}
		We first verify that the expressions on the right hand side of \eqref{eq:covLR} are covariances on $\R^d\times (0,\infty)$.  In the case of $S$ we need to prove for any $x_1,\ldots, x_n\in\R^d$, any $ t_1, \ldots ,t_n\in (0,\infty)$, and arbitrary reals $a_1,\ldots a_n$  that one has
		$$
		\sum_{j,k=1}^na_ka_j\int_0^{t_k\wedge t_j}k(e^{u}(x_j-x_k))e^{-\delta u}du\geq 0.
		$$
		By symmetry we may assume that $0\leq t_1\leq t_2\leq \ldots \leq t_n<\infty$. Set $t_0=0$.
		The desired positivity is then seen directly from the covariance property of $k$ by writing the left hand side in the form
		$$
		\sum_{\ell=1}^n\int_{t_{\ell-1}}^{t_\ell}\Big(\sum_{j,k=\ell}^n a_ka_jk(e^{u}(x_j-x_k))\Big)e^{-\delta u}du.
		$$
		Moreover, the covariance is clearly locally $\varepsilon$-H\"older with respect to the space-variables and even locally Lipschitz with respect to the $t$-variables. In particular, it is jointly $\varepsilon$-H\"older over compacts, and the existence of an almost surely  locally  $\varepsilon_0$-H\"older field $S_{\{\mathbf{\cdot}\}}(\mathbf{\cdot})$  follows from the standard regularity theory of Gaussian fields, see e.g. \cite[Theorem 1.3.5]{AT}. The proof for the field $L_{\{\mathbf{\cdot}\}}(\mathbf{\cdot})$  is similar, and we may naturally construct both fields on a common probability space so that they will be independent of each other. This yields (i) and (ii) as the stated independence of increments  follows from the covariance structure.
		
		In order to verify (iii) it is convenient to  reparametrize by considering $\widetilde S_t(x):=S_{\log(1/t)}(x)$, $t\in (0,1]$.  Then the field $\widetilde S $ has the covariance structure
		\begin{equation}\label{eq:covR}
		\E \widetilde S_t(x)\widetilde S_{t'}(x')= \int^1_{t\vee t'}k\big(u^{-1}(x-{x'})\big)u^{\delta-1}du, 
		\end{equation}
		which yields an extension to a covariance on $\R^d\times [0,1]$. In order to estimate the continuity of the extended covariance, we set $\varepsilon'=1/2\min(\varepsilon, \delta).$  We clearly have $\delta$-H\"older continuity with respect to the $t$-variables, and given $|x-x_0|\leq r\leq1$ and $|y-y_0|\leq r\leq 1$ one obtains
		\begin{align*}
		|C_{\widetilde S}(x,y,t,t')-C_{\widetilde S}(x_0,y_0,t,t')|\; \leq& \; \int^1_0\big| k\big(u^{-1}(x-y)\big)-k\big(u^{-1}(x_0-y_0)\big)\big| u^{\delta-1}du\\
		\lesssim&\; \int^1_0(2r/u)^{\varepsilon'} u^{\delta-1}du \;\lesssim r^{\varepsilon'}.
		\end{align*}
		Thus $\widetilde S$ has a  $\varepsilon'$-H\"older continuous covariance, which again implies existence of a modification that is a.s. H\"older continuous on compacts, and we may assume that we obtained our initial construction of $S$ from such a $\widetilde S$. This  and decreasing the $\varepsilon_0$ obtained in (ii) if needed clearly yields (iii).
		
		Towards (iv) and (v),  we first  note that since {$Y_t=L_t+S_t$}, with independence of the summands, the uniform convergence properties of the field  $S_t$ and its covariance as $t\to\infty$ established in (iii) show that it is enough to treat {}$Y_t$. First of all, {$C_{Y_t}(x,y)=K_t(x-y)$}, where 
		$$
		K_t(x):=\int_0^tk(e^ux)du.
		$$
		We shall perform comparison with the special case\footnote{It is well known \cite[Theorem~1.14]{SteinWeiss} that $\widehat{k^{(0)}}(z)=\frac{c_d}{2} e^{-2\pi |z|}$, where $c_d > 0$ is the area of the unit $d$-sphere $S^d \subset \reals^{d+1}$. As $\widehat{k^{(0)}}(z) > 0$, $k^{(0)}$ gives rise to a translation invariant covariance.} $k= k^{(0)}$ where $ k^{(0)}(x):=(1+|x|^{2})^{-(d+1)/2}$.  Then it follows that
		$$
		K^{(0)}_t(x)= \int_{|x|}^{|x|e^t}\frac{dy}{y(1+y^2)^{(d+1)/2}} = F(|x|)-F(|x|e^t),
		$$
		where $F(y):=\int_y^\infty u^{-1}(1+u^2)^{-(d+1)/2}$ is positive, continuous and decreasing on $(0,\infty)$, and it  obviously satisfies $\lim_{y\to\infty} F(y)= 0$ and $\lim_{y\to 0^+} F(y)-\log(1/y)=0.$ Actually, we note that $F(y)-\log(1/y)$ has smooth continuation over $0$. From this one deduces that the covariances $K^{(0)}_t(x)$ satisfy the conditions of Definition \ref{def:standard}, and 
		together with  the martingale property of {$(Y^{(0)}_t)_{t\geq 0}$}  we deduce that the fields {}$(Y^{(0)}_{t_n})_{n\geq 1}$ yield a standard approximation   to the limit field {$Y^{(0)}$} with covariance kernel $C_{Y^{(0)}}(x,y)= F(| x-y|)$.  In order to treat the general case, we denote by $e_1$ the first unit vector and note that for any $k$ satisfying the conditions of the theorem, one has ${K}^{(0)}_t(0)={K}_t(0)$  and
		$$
		K(x)-K_t(x)={K}^{(0)}(x)-{K}^{(0)}_t(x)+r(e^t|x|), \qquad \textrm{for}\quad x\not=0
		$$
		where $r(y):=\int_{y}^\infty u^{-1}(k- k^{(0)})(ue_1)du$.  The function  $r$ is continuous and uniformly bounded on $[0,\infty)$ with $\lim_{y\to\infty}r(y)=0.$ To check these claims, one applies the H\"older continuity of $ k^{(0)}-k$, the fact that $ k^{(0)}(0)=k(0)=1$, and  the estimate \eqref{eq:logno2}. Since  we know the desired claim (namely being a standard approximation) for the fields {$Y^{(0)}_t,$}  the above equality implies it for the fields {}$Y_t$.  
		
		In order to prove the stated convergence in $H^s_{loc} (\R^d)$, we fix $s\in (-1/2,0)$ and pick a test function $\phi\in C^\infty_c(\R^d)$.  By definition and (ii), $t\mapsto \phi {Y_t}$ is a martingale that takes values in $C_c(\R^d)$, hence in $H^s(\R^d)$, whence (as the latter space is a separable Hilbert-space, see e.g. \cite[Theorem 3.61, Theorem 1.95]{HNVW}) the convergence follows as soon as we prove the $L^2(H^s(\R^d))$-boundedness of the martingale. By the proof of \cite[Proposition 2.3]{JSW}) we have for any fixed $s<0$ and all $t\geq 0$
		\begin{align*}
		\E \|\phi {Y_t}\|_{H^{s}(\reals^d)}^2 & \leq  c_s\int_{\R^d\times\R^d} \frac{\phi(x)\phi(y){C_{Y_t}}(x, y)}{|x - y|^{d +2s}} \, dx \, dy \; \\
		&\lesssim\int_{({\rm supp\; }\phi)^2} \frac{1+\log^+(|x-y|^{-1})}{|x - y|^{d +2s}} \, dx \, dy <\infty,
		\end{align*}
		which is the desired uniform bound. It is then clear that the limit field {$Y$} has the stated covariance structure \eqref{eq:logno}. The facts that {$L$} is log-correlated and {$Y$} is $\star$-scale invariant follow immediately from the covariance structures and the relevant definitions. This concludes the proof of (iv) and (v).
		
		Finally, we turn to the statement (vi) and examine the local smoothness of $g$ in the decomposition \eqref{eq:cov} for the field {$Y$}. Recall that we are now assuming that $k\in H_{loc}^s(\R^d)$ for some $s>d$. Let us assume first in addition that $k$ has compact support, whence $K$ also has, and then the Fourier transform $\widehat K$ is smooth.
		It is readily checked that $h(\cdot-\cdot)\in H^{s}_{loc}(\R^{2d})$ if $h(\cdot)\in H^{s}_{loc}(\R^{d})$ with $s\geq 0$, which means that, as everything is translation invariant, to get a hold of the regularity of $g(x,y)$ we only need to study the local smoothness of $K-\log(1/|\cdot|)$ as a function on $\R^d$.
		We may write for $\xi\not=0$
		\begin{align*}
		\widehat K(\xi)&= \int_0^1\widehat k(u\xi)u^{d-1}du= |\xi|^{-d} \int_0^{|\xi|}\widehat k(ue_1)u^{d-1}du\\ &=
		|S_{n-1}|^{-1}|\xi|^{-d} - |S_{n-1}|^{-1}|\xi|^{-d} \int_{|z|>|\xi|}\widehat k(z)dz,
		\end{align*}
		where  $e_1$ is the first unit vector,  $|S_{n-1}|$  stands for the $(n-1)$-measure of the unit $(n-1)$-sphere,  and we used that $k(0)=1=\int_{\R^d}\widehat k(z)dz.$ 
		Since outside the origin the Fourier-transform of $\log(1/|\cdot|)$ is given by the function $|S_{n-1}|^{-1}|\xi|^{-d}$, the above equality enables us to write
		$$
		K=\log(1/|\cdot|) +h+h_0,
		$$
		where the Fourier transform of $h_0\in \mathcal{S}'(\R^d)$  is supported in $\overline{B(0,1)}$,  so that $h_0$ is smooth, and
		$$
		\widehat h(\xi):=  \begin{cases}-|S_{n-1}|^{-1}|\xi|^{-d} \int_{|z|>|\xi|}\widehat k(z)dz, &\textrm{for}\;\; |\xi|>1,\\
		0 &\textrm{for}\;\; |\xi|\leq 1.
		\end{cases}
		$$
		It is then enough to check that  $\|h\|_{H^{s'}(\R^d)}<\infty$ for some $s'>d$, which in turn follows as soon as we verify that $|\widehat h(\xi)|\leq |\xi|^{-a}$ for some  $a>3d/2$. By Cauchy--Schwarz 
		\begin{align*}
		|\xi|^d|\widehat h(\xi)|\; &\leq\;  \Big(\int_{|z|>|\xi|}|\widehat k(z )|^2(1+|z|^2)^{s}dz\Big)^{1/2} \Big(\int_{|z|>|\xi|}(1+|z|^2)^{-s}dz\Big)^{1/2} \\
		&\leq C|\xi|^{d/2-s},
		\end{align*}
		which yields the claim (recall here that we are assuming that $s>d$, $k$ is compactly supported, and that $k\in H_{loc}^s(\R^d)$, which in the compactly supported case is of course equivalent to $k\in H^s(\R^d)$). To finish, our computations in this part did not use the fact that $k$ is a covariance. By writing 
		$K(x)=\int_{|x|}^{\infty} k(ue_1)u^{-1}du$ we see that the smoothness of $K-\log(1/|\cdot|)$ on a given ball $B(0,R)$ depends only on (say) values of $k$ on $B(0,2R)$, and  we infer that the general situation reduces to the case of compact $k$.
	\end{proof}
	\begin{remark}\label{rem:0} In case $k$ is compactly supported, the covariance structure \eqref{eq:covLR}  of the field {$L$} shows that the field admits a representation in terms of a  weighted hyperbolic white noise on $\R^d\times (0,1)$ such that formally {}$L(x)$ is given by integrating the white noise against the function $h(x-\cdot,\cdot)$, where
		$$
		h(x,t):= \begin{cases}\big(\mathcal{F}^{-1} \sqrt{\widehat k}\big)(x/t), & t\leq 1,\\
		0, & t>1.
		\end{cases}
		$$
		The {covariance structure of the white noise is given by} $\E W(K_1)W(K_2)=\int_{K_1\cap K_2}y^{-{(d+1)}}(1-y^\delta)dxdy$ for, say, compact subsets $K_1,K_2\subset \R^d\times (0,1)$.
		One may note that $h$ is supported in a cone $\{ (x,t)\in \R^d\times (0,\infty)\;: \;  |x|\leq ct,\; t\leq 1\}$. See in this connection also Lemma~\ref{le:cascadelike} below.  The \lq almost\rq\    in the notion of almost $\star$-scale invariant refers to this mild extra weight in the hyperbolic white noise and is visible in the covariance structure \eqref{eq:covLR} through the $1-e^{-\delta u}$-term.  \enddef
	\end{remark}
	\begin{remark}\label{rem:1} The reason why we restrict ourselves here to rotationally invariant functions is  that for the $\star$-scale invariant log-correlated field {$Y$} the function $g$ in the representation \eqref{eq:cov} is not in general  continuous at the diagonal, but it is so in case $k$ is rotationally invariant. 
		
		As suggested to us by an anonymous reviewer, we now provide an example of such a phenomenon. Let us consider the case $d=2$, and let $k:\R^2\to \R$ be given by $k(x,y)=k_0(x)k_0(2y)$, where we assume that $k_0:\R\to \R$ is H\"older continuous, $k_0(0)=1$, $k_0$ decays algebraically at infinity, and gives rise to a translation invariant covariance $k_0(x-x')$ on $\R^2$.	To prove that the function $g$ from \eqref{eq:cov} is not continuous up to the diagonal, it is sufficient to prove that the limit
		\[
		\lim_{r\to 0^+}g(r(e_1,e_2),0)=\lim_{r\to 0^+}\left[\int_0^\infty k(e^u r(e_1,e_2))du-\log r^{-1}\right]
		\]
		depends on the unit vector $(e_1,e_2)\in \R^2$. Comparing the cases $(e_1,e_2)=(1,0)$ and $(e_1,e_2)=(0,1)$, we find with our definitions
		\begin{align*}
		\int_0^{\infty}k(e^{u}r(1,0))du-\int_0^{\infty}k(e^{u}r(0,1))du&=\int_0^\infty \left(k_0(e^{u}r)- k_0(2e^{u}r)\right)du\\
		&=\int_0^{\log 2}k_0(e^u r)du\\
		&\to \log 2
		\end{align*}
		as $r\to 0$, and hence $g$ is not continuous up to the diagonal.
		\enddef
	\end{remark}
	
	Having in mind applications to analytic continuation of multiplicative chaos and to Onsager type inequalities,
	we record explicitly the following almost `cascade-like' independence  property.
	\begin{lemma}\label{le:cascadelike}
		If in Proposition \ref{pr:split} the covariance function $k$ is supported on $B(0,1)$, then the fields {$Y_t$} have the following independence of increments property: for $|x-y|\geq e^{-t}$, one has {$Y_u(x)-Y_t(x)\perp Y_s(y)$} for all $u\geq t$ and $s>0$. Especially,
		$$
		\E {Y_u(x)Y_{u'}(y)} = \E {Y_t(x)Y_{t}(y)}\quad\textrm{if}\quad u,u'\geq t \quad\textrm{and}\;\; |x-y|\geq e^{-t}.
		$$
		Similar statements hold  true for the fields {$L_t$}.
	\end{lemma}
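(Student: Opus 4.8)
The plan is to reduce everything to a one-line computation with the covariance of $Y_t$; there is essentially no obstacle here, the only point needing a word of care being the harmless boundary case $|x-y|=e^{-t}$.

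First I would record the covariance of the approximating field: adding the two identities in \eqref{eq:covLR} and using that $L_{\{\mathbf{\cdot}\}}(\mathbf{\cdot})$ and $S_{\{\mathbf{\cdot}\}}(\mathbf{\cdot})$ are independent yields, for all $x,x'\in\R^d$ and $t,t'>0$,
\[
\E Y_t(x)Y_{t'}(x') = \int_0^{t\wedge t'} k\big(e^v(x-x')\big)\,dv .
\]
Since all the fields in sight are jointly Gaussian, the asserted orthogonality $Y_u(x)-Y_t(x)\perp Y_s(y)$ is equivalent to the vanishing of the covariance $\E\big(Y_u(x)-Y_t(x)\big)Y_s(y)$, and the stated covariance identity is what has to be checked directly.

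Next, fix $x,y$ with $|x-y|\geq e^{-t}$ and take $u\geq t$, $s>0$. From the covariance formula and the inequality $u\wedge s\geq t\wedge s$ one gets
\[
\E\big(Y_u(x)-Y_t(x)\big)Y_s(y) = \int_{t\wedge s}^{u\wedge s} k\big(e^v(x-y)\big)\,dv .
\]
If $s\leq t$ the interval of integration is empty. If $s>t$, then $v$ runs over a subset of $[t,\infty)$, on which $e^v|x-y|\geq e^{t}e^{-t}=1$; since $\supp k\subset B(0,1)$ the integrand vanishes there (the single value $v$ with $e^v|x-y|=1$ being irrelevant to the integral), so the covariance is $0$. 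This proves the independence statement. For the displayed identity I would split, for $u,u'\geq t$ with $|x-y|\geq e^{-t}$,
\[
\E Y_u(x)Y_{u'}(y) = \int_0^{u\wedge u'} k\big(e^v(x-y)\big)\,dv = \int_0^{t} k\big(e^v(x-y)\big)\,dv + \int_t^{u\wedge u'} k\big(e^v(x-y)\big)\,dv ,
\]
where $u\wedge u'\geq t$, so the second term vanishes by the same support argument while the first equals $\E Y_t(x)Y_t(y)$.

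Finally, the assertions for $L_t$ follow by the identical argument applied to the first line of \eqref{eq:covLR}: the weight $1-e^{-\delta v}$ plays no role, since $k\big(e^v(x-y)\big)$ already vanishes for a.e.\ $v\geq t$ whenever $|x-y|\geq e^{-t}$. Hence $\E\big(L_u(x)-L_t(x)\big)L_s(y)=0$ and $\E L_u(x)L_{u'}(y)=\E L_t(x)L_t(y)$ for $u,u'\geq t$ and $|x-y|\geq e^{-t}$.
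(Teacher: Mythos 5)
Your proof is correct and is exactly the argument the paper has in mind: the paper simply states that the lemma is an immediate consequence of the covariance structure \eqref{eq:covLR}, and your computation (summing the two covariances to get $\E Y_t(x)Y_{t'}(x')=\int_0^{t\wedge t'}k(e^v(x-x'))\,dv$ and using $\supp k\subset B(0,1)$ to kill the integrand for $v\geq t$ when $|x-y|\geq e^{-t}$, with joint Gaussianity converting zero covariance into independence) is just that argument written out in full.
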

	\begin{proof}
		This is an immediate consequence of the covariance structure \eqref{eq:covLR} 
	\end{proof}
	\subsection{Decomposing log-correlated fields}
	
	The following theorem contains Theorem {B} from the introduction, and it can be used both in order to construct locally log-correlated fields with a given covariance where $g$ is almost arbitrary, or in order to split a given log-correlated field  into a sum of a regular field and an (almost) $\star$-scale invariant field.

	\begin{theorem}\label{th:deco} Let $C(x,y)=\log(1/|x-y|) +g(x,y)$, where $g \in H^{s}_{loc}(U\times U)$  for some $s>d$. Fix $\delta>0$ let $k:\R^d\to\R$ be any function satisfying the conditions of Proposition \ref{pr:split} {\rm(}also the assumption of $k\in H_{loc}^s(\R^d)$ from the statement {\rm(vi)}{\rm)} with the additional assumptions that $k\in L^1(\R^d)$ and $\int_{\R^d}k >0$. Then for all small enough $\varepsilon >0$  it holds that:
		
		\smallskip
		
		\noindent {\rm (i)}\quad The function $C_{|B(0,\varepsilon)\times B(0,\varepsilon)}$ is the covariance of a log-correlated field $X$ on the ball $B(0,\varepsilon)$. 
		
		\smallskip
		
		\noindent {\rm (ii)}\quad In the neighbourhood $B(0,\varepsilon)$ {\rm (}a copy of{\rm )} the field $X$ can be written as the sum of independent Gaussian fields
		$$
		X={ L + R},
		$$
		where {$R$} has almost surely H\"older-continuous realizations, and {$L$} is the almost $\star$-scale invariant field from Proposition \ref{pr:split} obtained by using a dilation of the seed covariance $k(\lambda_0\cdot)$ with a suitable dilation factor $\lambda_0 \geq 1.$
	\end{theorem}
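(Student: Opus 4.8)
The plan is to build the regular summand $R$ by hand as an \emph{independent} field with the correct covariance. After translating so that $x_0=0$ and, using $g\in H^{s}_{loc}$, replacing $g$ by a globally defined symmetric function lying in $H^{s}(\R^{2d})$ (hence Hölder continuous) and agreeing with the original $g$ near the origin, apply Proposition~\ref{pr:split} with the dilated seed $k(\lambda_0\,\cdot)$, where $\lambda_0\ge1$ is to be fixed later. This yields independent fields $L=L^{(\lambda_0)}$, $S=S^{(\lambda_0)}$ and the $\star$-scale invariant field $Y=L+S$, and from \eqref{eq:covLR}, \eqref{eq:logno} and part~(vi) of Proposition~\ref{pr:split} one records
$$
C_Y(x,y)=\log\tfrac1{|x-y|}-\log\lambda_0+g_k(\lambda_0(x-y)),\qquad C_L=C_Y-C_S,
$$
where (by rotation and translation invariance) $g_k(\lambda_0\,\cdot)\in H^{s'}_{loc}(\R^{2d})$ for some $s'>d$, and $C_S$ is Hölder continuous and is itself a genuine translation invariant covariance. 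The candidate decomposition on a small ball $B(0,\varepsilon)$ is $X:=L+R$, with $R$ Gaussian, independent of $L$, having covariance $C_R:=(C-C_L)\big|_{B(0,\varepsilon)\times B(0,\varepsilon)}$. If such an $R$ exists with Hölder realisations, then $C_X=C_L+C_R=C$ on $B(0,\varepsilon)^2$, which proves (i) and (ii) at once (for (i), $C$ is thereby exhibited as a covariance).

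The routine point is regularity of $R$. Since the logarithmic singularities of $C$ and $C_L$ cancel, $C_R=g(x,y)+\log\lambda_0-g_k(\lambda_0(x-y))+C_S(x,y)$ is bounded and, being assembled from the Hölder functions $g$, $g_k(\lambda_0\,\cdot)$ and $C_S$, is jointly Hölder continuous on $B(0,\varepsilon)^2$; a centered Gaussian field with Hölder sample paths and this covariance then exists by the standard regularity theory, e.g.\ \cite[Theorem~1.3.5]{AT}, exactly as in the proof of Lemma~\ref{le:absreg2}.

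The hard point — and the main obstacle — is positive semidefiniteness of $C_R$ on $B(0,\varepsilon)^2$. Writing $C_R=(C-C_Y)+C_S$ and using that $C_S$ is already a covariance, it suffices to show $C-C_Y$ is positive semidefinite on $B(0,\varepsilon)^2$. Here $C-C_Y=\psi_{\lambda_0}(x-y)+g(x,y)$ with $\psi_{\lambda_0}(h)=\log\tfrac1{|h|}-C_Y(h)=\log\lambda_0-g_k(\lambda_0h)$, which is bounded on $B(0,2\varepsilon)$. Two structural facts are available. First, the Fourier computation underlying part~(vi) of Proposition~\ref{pr:split}, together with $\widehat k\ge0$, gives away from the origin the nonnegative symbol $\widehat{\psi_{\lambda_0}}(\xi)=|\xi|^{-d}\int_{|\xi|/\lambda_0}^{\infty}\widehat k(we_1)\,w^{d-1}\,dw$; and dilating the seed decreases $C_Y$ in the positive-semidefinite order — the difference of two such $\star$-scale invariant covariances equals $\int k(e^{u}(\cdot-\cdot))\,du$ over a $u$-interval, hence is itself a covariance — so $\psi_{\lambda_0}$ is monotone increasing in $\lambda_0$ in that order. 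Second, for $\lambda_0$ large $C_Y$ becomes small, say in Hilbert--Schmidt norm, on $B(0,\varepsilon)$, whereas the log-correlated covariance $C$ retains a strong positive-definite core coming from the symbol $|S_{d-1}|^{-1}|\xi|^{-d}$ of $\log\tfrac1{|x-y|}$. The technical heart is to convert these into a genuine positivity bound: one should choose $\lambda_0$ large in terms of $\sup|g|$ and $k$, and then $\varepsilon$ small in terms of $\lambda_0$, so that the positive contributions of $\psi_{\lambda_0}$ and of $C_S$ dominate the perturbation $g$ on $L^{2}(B(0,\varepsilon))$. I expect this quantitative positivity step to be by far the most delicate part of the argument, the rest being bookkeeping. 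An alternative route is to establish (i) first, via the positivity of the logarithmic energy $\iint\log\tfrac1{|x-y|}\phi(x)\phi(y)\,dx\,dy$ restricted to small balls, and then bootstrap the comparison $C\succeq C_Y$ from there.
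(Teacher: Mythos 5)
Your skeleton is the same as the paper's: decompose $C-C_L=(C-C_Y)+C_S=C_S+(g-g_0)$, build $R$ as an independent Gaussian field with this covariance, handle the constant $g(0,0)-g_0(0,0)$ by dilating the seed (the paper does exactly this, and when the constant is nonnegative simply adds an independent variable $\sqrt{a}\,G$), and get H\"older realizations from Sobolev embedding and \cite[Theorem~1.3.5]{AT}. The regularity part of your argument is fine. But the one step you defer --- positive semidefiniteness --- is the entire content of the theorem, and the specific reduction you propose for it is not only unproven but wrong in general. You write that ``using that $C_S$ is already a covariance, it suffices to show $C-C_Y$ is positive semidefinite on $B(0,\varepsilon)^2$.'' This discards exactly the term the proof needs. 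The only positivity left in $C-C_Y=\log\lambda_0+g-g_0-\bigl(K-K_{\lambda_0}\bigr)$-type expressions comes from the zero mode ($\log\lambda_0$, which dies on mean-zero test functions) and from the kernel $\log(1/|\cdot|)-K_{\lambda_0}$, whose symbol is $|\xi|^{-d}\int_{|z|>|\xi|/\lambda_0}\widehat{k}(z)\,dz$; since $\widehat k$ has integrable, rapidly decaying tails, this decays strictly faster than $|\xi|^{-d-\delta}$, so it cannot absorb a generic perturbation $g-g_0$ that is only small in an $H^{d+\delta}$ sense. Concretely, in dimension $d\le 3$ take a smooth compactly supported admissible $k$, let $g_0$ be the $g$-part of its $\star$-scale covariance, and set $g=g_0+c|x-y|^{2\beta}$ with $d/4<\beta<1$; then $g\in H^{s}_{loc}$ with $s>d$, while for any fixed $\lambda_0$ and mean-zero $\varphi=\varepsilon^{-d}\phi(\cdot/\varepsilon)$ with first moments also vanishing one gets
\begin{equation*}
\iint (C-C_Y)\,\varphi\otimes\varphi \;\lesssim\; (\lambda_0\varepsilon)^{4}\;-\;c'\varepsilon^{2\beta}\;<\;0
\end{equation*}
for small $\varepsilon$, because $|x-y|^{2\beta}$ is conditionally negative definite with form $-c\int|\widehat\varphi(\xi)|^2|\xi|^{-d-2\beta}d\xi$. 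So $C-C_Y$ is \emph{not} positive semidefinite on small balls, even though the theorem is true.

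What rescues the argument in the paper is precisely the summand $C_S$ you dropped: the weight $e^{-\delta u}$ in Proposition~\ref{pr:split} was engineered so that $\widehat{C_S}(\xi)\gtrsim (1+|\xi|)^{-d-\delta}$, i.e.\ its quadratic form dominates $c\|\varphi\|^2_{H^{-(d+\delta)/2}}$ (Lemma~\ref{le:888}(ii)). On the other side, Lemma~\ref{le:888}(i) bounds the form of any kernel $h$ by $\|h\|_{H^{d+\delta}}\|\varphi\|^2_{H^{-(d+\delta)/2}}$, with the \emph{same} negative Sobolev norm of $\varphi$, and Lemma~\ref{le:444} (an interpolation argument) shows that after subtracting the value at the origin, the localized perturbation $\psi(\cdot/\varepsilon)\,(g-g_0-a)$ has $H^{d+\delta}$-norm tending to $0$ as $\varepsilon\to 0$. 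Hence $C_S+(g-g_0-a)$ is positive definite on a small ball, and the leftover constant $a$ is handled by $\sqrt a\,G$ (if $a\ge 0$) or by the dilation $k(\lambda_0\cdot)$, which lowers $g_0(0,0)$ by $\log\lambda_0$ (if $a<0$). Your proposal contains neither the matched-norm comparison (the analogue of Lemma~\ref{le:888}) nor the localization estimate (the analogue of Lemma~\ref{le:444}), and no tuning of $\lambda_0$ and $\varepsilon$ can compensate for throwing away $C_S$, since only that term has a symbol decaying slowly enough to control arbitrary admissible $g$. To repair the proof you must keep $C_S$ in the positivity step and quantify both bounds in the same dual norm, which is exactly the paper's route.
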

	\begin{proof} 
		We shall base the proof on a couple of auxiliary lemmas that will be used to show that one may add any (small enough and smooth enough) function to the covariance of the field $S$ in Proposition~\ref{pr:split} and still obtain a covariance.
		
		\begin{lemma}\label{le:888} {\rm (i)}\quad Assume that $h\in H^{s}(\R^{2d})$, where $s>0$. Then  for all real test functions $\varphi\in C_0^{\infty}(\R^d)$ it holds that
			$$
			\Big|\int_{\R^d\times \R^d} h(x,y)\varphi(x)\varphi(y)dxdy\Big| \leq \| h \|_{H^{s}(\R^{2d})}  \|\varphi\|_{H^{-s/2}(\R^{d})}^2.
			$$
			\noindent {\rm (ii)}\quad Let $\delta >0$ and the field $S$ be as in Proposition \ref{pr:split}. The covariance kernel $C_S(x,y)$  satisfies 
			$$
			\int_{\R^{2d}}C_S (x,y)\varphi(x)\varphi(y)\geq c\|\varphi \|_{H^{-d/2-\delta/2}(\R^d)}^2,
			$$
			where $c>0$ is a positive constant.
		\end{lemma}
		\begin{proof}
			(i)\quad The quantity on the left hand side  equals
			\begin{align*}
			&\Big|\int_{\R^d\times \R^d} \widehat h(\xi_1,\xi_2)\overline{\widehat\varphi(\xi_1)}\overline{\widehat\varphi(\xi_2)}d\xi_1d\xi_2\Big|\\
			&= \Big|\int_{\R^d\times \R^d} \widehat h(\xi_1,\xi_2)(|\xi_1|^2+1)^{s/4}(|\xi_2|^2+1)^{s/4}\frac{\overline{\widehat\varphi(\xi_1)}}{(1+|\xi_1|^2)^{s/4}}\frac{\overline{\widehat\varphi(\xi_2)}}{(1+|\xi_2|^2)^{s/4}}d\xi_1d\xi_2\Big|,
			\end{align*}
			and the result follows by Cauchy--Schwarz and by noting that $(|\xi_j|^2+1)\leq (1+|\xi_1|^2+|\xi_2|^2)$.

			(ii) The assumption that $k$ is integrable with $\int_{\R^d}k>0$  yields that $\widehat k$ is continuous with $\widehat k(0)>0$. We may thus pick $b>0$ so that $\widehat k(\xi)\geq b$ if $|\xi|\leq b$. By the covariance property of $k(x-y)$ we also have $\widehat k \geq 0$ everywhere. As we have $C_S(x,y)= H(x-y)$ with $H(x):=\int_0^\infty k(e^ux)e^{-\delta u}du$, it follows that 
			\begin{align*}
			\widehat H(\xi)= \int_0^\infty e^{-(d+\delta)u}\widehat k(e^{-u}\xi)du \geq b\int_{\log^+ (|\xi|/b)}^\infty e^{-(d+\delta)u}du \geq c(1+|\xi|)^{-d-\delta}
			\end{align*}
			with $c>0.$ We may then estimate 
			\begin{align*}
			\int_{\R^{2d}}C_S (x,y)\varphi(x)\varphi(y) \; &= \; \int_{\R^{d}}\overline{\varphi(x)}\left(\int_{\R^d}C_S (x,y)\varphi(y)dy\right)dx \\
			\;&=\; 
			\int_{\R^{d}}\widehat H(\xi)|\widehat \varphi(\xi)|^2d\xi \\
			&\gtrsim\int_{\R^{d}}(1+|\xi|)^{-d-\delta}|\widehat \varphi(\xi)|^2d\xi  \; \gtrsim\; \ \|\varphi \|_{H^{-d/2-\delta/2}(\R^d)}^2.
			\end{align*}
		\end{proof}

		\begin{lemma}\label{le:444}  Let $V\subset \R^{2d}$ be a neighbourhood of the origin and assume that $\delta\in (0,1/2)$. Assume that   $F\in H^{d+2\delta}(V)$ satisfies $F(0)=0$, and let $\psi\in C^\infty_c(\R^{2d})$ be supported in $B(0,1)$. 
			Then 
			\[ \lim_{\varepsilon\to 0^+}\| \psi(\cdot / \varepsilon)F\|_{H^{d+\delta}(\R^{2d})} = 0.\]
		\end{lemma}
		\begin{proof} 
			For a given $\varepsilon >0$ let  us denote by $T_\varepsilon$ the linear operator
			$$
			(T_\varepsilon f)(x) =\psi(\frac{x}{\varepsilon})\big(f(x)-f(0)\big).
			$$
			For fixed $\varepsilon >0$ it   is obviously bounded on any Sobolev space $H^s(\R^{2d})$ with $s>d$ since point evaluations are then continuous on  $H^s(\R^{2d})$ by the Sobolev embedding \eqref{eq:trois}. Towards our claim we first prove that there is a finite constant $C$, independent of $\varepsilon >0$, so that
			\begin{equation}\label{eq:une}
			\| T_\varepsilon f\|_{H^d(\R^{2d})}\leq C\|  f\|_{H^{d+\delta}(\R^{2d})}\qquad\textrm{for all}\quad 
			f\in H^{d+\delta}(\R^{2d}),
			\end{equation}
			and 
			\begin{equation}\label{eq:deux}
			\| T_\varepsilon f\|_{H^{d+1}(\R^{2d})}\leq C\|  f\|_{H^{d+1+\delta}(\R^{2d})} \qquad\textrm{for all}\quad 
			f\in H^{d+1+\delta}(\R^{2d}).
			\end{equation}

			We first consider  \eqref{eq:une}, for which it is enough to bound $\| T_\varepsilon f\|_{L^2(\R^{2d})}$ and $\| D^dT_\varepsilon f\|_{L^2(\R^{2d})}$. Again by  \eqref{eq:trois} we have the obvious bound $\| T_\varepsilon f\|_{L^2(\R^{2d})}\lesssim \|  f\|_{H^{d+\delta}(\R^{2d})}$, and the $d$:th derivative can be estimated by
			\begin{align}\label{eq:quatre}
			\int_{\R^{2d}}|D^d(T_\varepsilon f)|^2\lesssim&\;\; \varepsilon^{-2d}\int_{|x|\leq \varepsilon}|f(x)-f(0)|^2dx\; +\; 
			\sum_{k=1}^d \varepsilon^{2k-2d}\int_{|x|\leq \varepsilon}|D^kf(x)|^2dx\; \nonumber\\
			\lesssim&\;\; \|  f\|_{H^{d+\delta}(\R^{2d})}^2\;+ \;\sum_{k=1}^d \varepsilon^{2k-2d}\Big(\int_{|x|\leq \varepsilon}|D^kf(x)|^{q_k}dx\Big)^{2/q_k}
			\Big(\int_{|x|\leq \varepsilon}1dx\Big)^{1-2/q_k}\nonumber\\
			\lesssim&\;\;  \|  f\|_{H^{d+\delta}(\R^{2d})}^2+ \|  f\|_{H^{d}(\R^{2d})}^2\;\lesssim\; \|  f\|_{H^{d+\delta}(\R^{2d})}^2. 
			\end{align}
			Here we applied the Sobolev embedding  \eqref{eq:six} on  (components of) the function $D^kf$ with the exponent $q=q_k$ such that $\displaystyle\frac{1}{q_k}=\frac{1}{2}-\frac{(d-k)}{2d}$ so that $ 1-2/q_k=(d-k)/d.$ 
			This yields \eqref{eq:une}. In turn, we proceed similarly to obtain
			\begin{align}\label{eq:cinq}
			\notag \int_{\R^{2d}}|D^{d+1}(T_\varepsilon f)|^2&\lesssim \varepsilon^{-2d-2}\int_{|x|\leq \varepsilon}|f(x)-f(0)|^2dx\\
			&\qquad \; +\; 
			\sum_{k=1}^{d+1} \varepsilon^{2k-2d-2}\int_{|x|\leq \varepsilon}|D^kf(x)|^2dx\nonumber \\
			&\lesssim\|  f\|_{H^{d+1+\delta}(\R^{2d})}^2\\
			&\qquad \; { +} \; \sum_{k=1}^{d+1} \varepsilon^{2k-2d-2}\Big(\int_{|x|\leq \varepsilon}|D^kf(x)|^{\widetilde q_k}dx\Big)^{2/\widetilde q_k}
			\Big(\int_{|x|\leq \varepsilon}1dx\Big)^{1-2/\widetilde q_k}\nonumber\\
			\notag &\lesssim\;\; \|  f\|_{H^{d+1+\delta}(\R^{2d})}^2,
			\end{align}
			where the first part was handled by the fact that the latter embedding in \eqref{eq:trois} yields a uniform Lipschitz bound on $f$. This time we applied \eqref{eq:six} with the exponents
			$\displaystyle\frac{1}{\widetilde q_k}=\frac{1}{2}-\frac{(d+1-k)}{2d}$ so that $ 1-2/\widetilde q_k=(d+1-k)/d$ for $1\leq k\leq d+1.$
			
			By applying interpolation \eqref{eq:interpolation} on the inequalities \eqref{eq:une} and \eqref{eq:deux}  with respect to the smoothness index we obtain that $\| T_\varepsilon f\|_{H^{d+\delta}(\R^{2d})}\leq C\|  f\|_{H^{d+2\delta}(\R^{2d})}$ with a uniform bound with respect to $\varepsilon$. In order to apply this  to prove that the stated limit is zero, it is now enough to use the density of smooth functions  to be able to assume that $F\in C^\infty_c(\R^{2d})$ with $F(0)=0$. In that case we see immediately from \eqref{eq:quatre} that $\| T_\varepsilon F\|_{H^d(\R^{2d})}\to 0$ as $\varepsilon\to 0^+$. As we already know that at the same time $\| T_\varepsilon F\|_{H^{d+1}(\R^{2d})}$ stays bounded, again interpolation (now for a fixed single function, see \eqref{eq:interpolation2})  implies that   $\| T_\varepsilon F\|_{H^{d+\delta}(\R^{2d})}\to 0$ as $\varepsilon\to 0^+$.
		\end{proof}
		
		We now continue the proof of the Theorem~\ref{th:deco} and consider first part (ii). We write $C_X=\log(1/|x-y|)+g(x,y)$. By Proposition \ref{pr:split}, for our given function $k$, there exists a $\star$-scale invariant field {$Y$} in some neighbourhood of the origin for which we have ${C_Y(x,y)}=\log(1/|x-y|)+g_0(x,y)$, where $g_0\in H^s_{loc}(\R^{2d})$ with $s>d$. Let {$Y,S,L$} stand for the fields from Proposition \ref{pr:split}, where the construction is performed by using the dilated seed covariance function $k(\lambda_0\cdot )$, where the dilation factor $\lambda_0$ will be determined in the course of the proof. We may write
		\begin{align}\label{eq:write1}
		C_X(x,y)= {C_Y(x,y)}+ (g-g_0)(x,y)={C_{L}(x,y)} + \big(C_S(x,y)+(g-g_0)(x,y)\big).
		\end{align}
		Assume first that $a:=g(0,0)-g_0(0,0)\geq 0$. Then we set $F(x,y)= (g-g_0)(x,y)-a $ and pick a non-negative test function $\psi\in C_c^\infty(\R^{2d})$   such that $\psi_{|B(0,2)}(x)=1$  and $\psi=0$ outside $B(0,3)$. Since in a suitable neighbourhood $V$  of the origin $F\in H^s_{loc}(V)$ with $s>d$, with $F(0,0)=0$, by combining  Lemmas 
		\ref{le:888} and \ref{le:444} we see that $C_S +\psi_\varepsilon F$ is positive definite for small enough $\varepsilon >0$. Especially, then $C_S+F$ is a H\"older continuous covariance kernel  on $B(0,\varepsilon)$  by \eqref{eq:trois}, and we may construct on $B(0,\varepsilon)$ a centred Gaussian field {$R_0$} with ${C_{R_0}}=C_S+F$  with a.s. H\"older continuous realizations. Let $G\sim N(0,1)$ be a Gaussian variable independent from {$R_0$} and {$L$}. Set ${R:=R_0}+\sqrt{a}G$. It follows that on  $B(0,\varepsilon)\times B(0,\varepsilon)$ it holds that
		$$
		C_X={C_{L}+C_{R_0}+a =C_{L}+ C_{R}},
		$$
		which yields the claim in the case $a\geq 0$ (thus in this case we may take $\lambda_0=1$).
		
		The  case $a:=g(0,0)-g_0(0,0)<0$ can be reduced to the previous case if we show that by replacing $k$ by 
		the dilation $k(\lambda_0\cdot )$ we may decrease $g_0(0,0)$ as much as we want. Namely, denote by $\widetilde g_0$ the function $g_0$ after the replacement, and note that directly from formula \eqref{eq:logno} we obtain that
		\begin{align*}
		g_0(0,0)-\widetilde g_0(0,0) &= \lim_{x\to 0} \left( \int_0^\infty k(e^{u}x)du-  \int_0^\infty k(e^{u}\lambda_0x)du\right)\\ &= \lim_{x\to 0}\int_0^{\log \lambda_0}k(e^{u}x)du =\log\lambda_0,
		\end{align*}
		from which the claim follows since we may choose $\lambda_0\geq e^{-a}.$ We have thus completed the proof of (ii),  which  clearly implies  (i).
	\end{proof}
	
	\begin{remark}\label{rem:sileys} We observe that the condition on smoothness of $k$  and $g$  are satisfied  if these functions have $C^{d+\varepsilon}_{loc}$-smoothness. However, the actual requirement of  $H^{d+\varepsilon}_{loc}$-smoothness is in a certain sense much less restrictive as it e.g. allows for local behaviour of type $|x-x_0|^\varepsilon$.   \enddef
		
	\end{remark}

	\section{Critical multiplicative chaos}\label{se:critical}

	We recall that the definition of critical chaos was given in Section \ref{sec:intro}. Critical chaos  appears in many facets of the multiplicative chaos theory, even when dealing with non-critical chaos.  In particular, it encodes the location of maxima of the log-correlated Gaussian field and it appears as a building block for the super-critical chaos \cite{RhVa,MRV,BRV}.

	As noted in the introduction, existence of the correctly normalized critical chaos has been proven only in the setting of $\star$-scale invariant fields (again, we refer to \cite{DRSV1,DRSV2,JS,P} here), and our goal is to extend this to a more general class of log-correlated fields. The basic idea will be to use Theorem A to reduce to the $\star$-scale invariant case and apply the known results of \cite{DRSV1,DRSV2,JS,P}. Before going into this, we will introduce the notion of convergence of random measures which is relevant to the critical case.

	\medskip
	
	\begin{definition}\label{def:weak*probability}
		Let $K\subset\R^d$ be a compact subset, and let $\mu,\mu_1,\mu_2,\ldots$ be random Borel probability measures  on $K$ (defined on a common probability space). We say that $\mu_n\to \mu$ weak$\hbox{}^*$ in probability, denoted by 
		$\displaystyle\mu_n\weaklyprob\mu$, if for every $\phi\in C(K)$
		one has 
		\begin{equation}\label{eq:integrals}
		\displaystyle\int_K\phi(x)\mu_n(dx)\inprob\displaystyle\int_K\phi(x)\mu(dx). 
		\end{equation} \enddef
		
	\end{definition}
	
	We collect in the following lemma some basic properties of weak$\hbox{}^*$ convergence in probability. Below, when we speak of convergence of continuous random functions in probability, we refer to convergence in probability with respect to the sup-norm unless others stated. We note that as a compact set $K\subset\R^d$ is separable, also $C(K)$ is separable.
	\begin{lemma}\label{le:conv1} Assume that $\mu,\mu_1,\mu_2,\ldots$ are random Borel probability measures  on a compact subset $K\subset\R^d.$  
		
		\smallskip
		
		\noindent  {\rm (i)}\quad Assume that $\mu_n\weaklyprob\mu$ over a compact set $K$. Then the the total masses $\mu_n(K)$ stay uniformly bounded in probability, i.e.
		$$
		\Prob \{ \mu_n(K)>\lambda \;\;\textrm{\rm for some}\; n\} \to 0 \quad\textrm{as}\quad \lambda\to\infty.
		$$
		
		\noindent {\rm (ii)}\quad If \eqref{eq:integrals} holds true for elements in a countable dense set $\{\phi_j\}_{j=1}^\infty\subset C(K)$, then it follows that $\mu_n\weaklyprob\mu$.
		
		\smallskip
		
		\noindent {\rm (iii)}\quad Assume that $f, f_n\in C(K)$  $(n\geq 1)$ are random continuous functions on $K$ with the property that $f_n\to f$ in probability in $C(K)$, and assume that  $\mu_n\weaklyprob\mu$.  Then
		$$
		f_n\mu_n\weaklyprob f\mu\quad\textrm{as}\quad n\to\infty.
		$$
	\end{lemma}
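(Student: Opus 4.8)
The plan is to prove the three items of Lemma \ref{le:conv1} essentially by standard real-analysis arguments, exploiting separability of $C(K)$ and the probabilistic version of the Banach--Steinhaus / uniform boundedness heuristic. Let me sketch each part.

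\textbf{Part (i): Uniform tightness of the total masses.} Here I would use that $\mu_n(K)=\int_K \1\,d\mu_n$, but the constant function $\1$ need not be the limit of the test functions in a useful way directly — rather, $\mu_n(K)\to\mu(K)$ in probability by applying \eqref{eq:integrals} to $\phi\equiv 1\in C(K)$. A sequence of nonnegative random variables converging in probability to a finite random variable is automatically uniformly tight: given $\eta>0$, pick $\lambda_0$ with $\Prob\{\mu(K)>\lambda_0/2\}<\eta/2$; since $\mu_n(K)\to\mu(K)$ in probability, for $n\ge N$ one has $\Prob\{\mu_n(K)>\lambda_0\}<\eta$, and the finitely many remaining $\mu_n(K)$, $n<N$, are each finite a.s., so we may enlarge $\lambda_0$ to handle them. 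This gives $\sup_n\Prob\{\mu_n(K)>\lambda\}\to 0$; strengthening ``for some $n$'' requires a small extra step — one notes that $\{\mu_n(K)>\lambda\text{ for some }n\}=\bigcup_n\{\mu_n(K)>\lambda\}$ and splits this union at the index $N=N(\lambda)$ after which $\Prob\{\mu_n(K)>\lambda\}$ is uniformly small; the tail $\bigcup_{n\ge N}$ is controlled once we observe $\mu_n(K)\to\mu(K)$ means $\sup_{n\ge N}\mu_n(K)$ is close to $\mu(K)$ in a suitable sense, or more cleanly by a diagonal/Egorov-type argument. Actually the cleanest route: pass to a subsequence along which $\mu_n(K)\to\mu(K)$ almost surely; then $\sup_n\mu_n(K)<\infty$ a.s. along that subsequence, hence is tight, and since the original sequence converges in probability the whole sequence of suprema is tight. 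I expect this ``for some $n$'' uniformity to be the slightly delicate point of part (i).

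\textbf{Part (ii): Countable dense set suffices.} This is the routine density argument. Fix $\phi\in C(K)$ and $\varepsilon>0$; choose $\phi_j$ with $\|\phi-\phi_j\|_\infty<\varepsilon$. Then
\[
\Big|\int\phi\,d\mu_n-\int\phi\,d\mu\Big|\le \varepsilon\,\mu_n(K)+\Big|\int\phi_j\,d\mu_n-\int\phi_j\,d\mu\Big|+\varepsilon\,\mu(K).
\]
The middle term $\to 0$ in probability by hypothesis; the outer terms are small in probability provided $\mu_n(K),\mu(K)$ are uniformly tight — which is exactly part (i) (note part (i)'s proof used only \eqref{eq:integrals} for $\phi\equiv 1$, which is among the $\phi_j$ up to rescaling, or one simply includes constants in the dense set). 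So a standard ``$\varepsilon/3$'' estimate in probability closes it.

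\textbf{Part (iii): Multiplication by a convergent random continuous function.} Here I would write, for $\phi\in C(K)$,
\[
\int\phi f_n\,d\mu_n-\int\phi f\,d\mu=\int\phi(f_n-f)\,d\mu_n+\Big(\int (\phi f)\,d\mu_n-\int(\phi f)\,d\mu\Big).
\]
The first term is bounded by $\|\phi\|_\infty\|f_n-f\|_\infty\,\mu_n(K)$, which tends to $0$ in probability since $\|f_n-f\|_\infty\to 0$ in probability and $\mu_n(K)$ is tight (part (i)). For the second term the difficulty is that $\phi f$ is a \emph{random} element of $C(K)$, not a fixed test function, so \eqref{eq:integrals} does not apply directly. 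The standard fix: approximate $f$ by a fixed deterministic $g\in C(K)$ on the event $\{\|f-g\|_\infty<\varepsilon\}$ (which has probability close to $1$ by choosing $g$ suitably — using separability of $C(K)$ and that $f$ is a.s. in $C(K)$, so its law is tight on $C(K)$, hence $f$ lies within $\varepsilon$ of some finite subset of a countable dense family with probability $\ge 1-\eta$). On that event, $\int(\phi f)\,d\mu_n$ differs from $\int(\phi g)\,d\mu_n$ by at most $\varepsilon\|\phi\|_\infty\mu_n(K)$, and $\int(\phi g)\,d\mu_n\to\int(\phi g)\,d\mu$ in probability since $\phi g$ is now deterministic. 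Reassembling the $\varepsilon$-estimates gives the claim. \textbf{The main obstacle} across the lemma is this recurring theme that one must upgrade convergence statements for fixed test functions to statements involving random functions; the tool is always a tightness argument (part (i) for the measures, tightness of the law on $C(K)$ for the random functions) combined with density, and I expect getting the ``uniform over $n$'' tightness in part (i) to require the most care.
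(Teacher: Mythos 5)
Your parts (ii) and (iii) are correct and are essentially what the paper intends: its own proof is only a sketch ((i) from the choice $\phi\equiv 1$, (ii) by $\varepsilon$-approximation combined with (i), and (iii) ``by a similar argument''), and your $\varepsilon/3$ estimate for (ii) and your reduction of the random factor $f$ to a deterministic approximant on an event of probability at least $1-\eta$ for (iii) fill in that sketch correctly. Note also that in those two parts you only ever need the weak form of (i) that you do establish, namely $\lim_{\lambda\to\infty}\limsup_{n\to\infty}\Prob\{\mu_n(K)>\lambda\}=0$, together with tightness of $\mu(K)$.

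The genuine problem is the last step of your part (i), precisely the point you flagged as delicate. The asserted conclusion $\Prob\{\mu_n(K)>\lambda \text{ for some } n\}\to 0$ as $\lambda\to\infty$ is equivalent to $\sup_n \mu_n(K)<\infty$ almost surely, and this does \emph{not} follow from $\mu_n(K)\to\mu(K)$ in probability: if $Z_n\geq 0$ are independent with $\Prob(Z_n=n)=1/n$ and $Z_n=0$ otherwise, then $Z_n\to 0$ in probability while, by the second Borel--Cantelli lemma, $\sup_n Z_n=\infty$ almost surely (and such $Z_n$ are easily realized as total masses of random finite measures converging weak$^*$ in probability). Your ``cleanest route'' founders exactly here: passing to an almost surely convergent subsequence controls the supremum only along that subsequence, and convergence in probability of the full sequence gives no way back; uniform-in-$n$ tightness of the individual masses is strictly weaker than an almost sure bound on $\sup_n\mu_n(K)$. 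What rescues statement (i) as literally written is the hypothesis your argument never uses: the $\mu_n$ are random \emph{probability} measures, so $\mu_n(K)\equiv 1$ and (i) is immediate (which is presumably why the paper dismisses it in one line). So either invoke that hypothesis, or content yourself with the $\limsup$-tightness you proved correctly, which is all that (ii), (iii), and the paper's later applications of the lemma actually require.
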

	\begin{proof} (i)\quad This follows easily by  applying   \eqref{eq:integrals} with the choice $\phi\equiv 1$. In turn (ii) follows by approximating given $\phi\in C(K)$ up to $\varepsilon$ by suitable $\phi_j$ and invoking part (i) of the lemma. Finally, (iii) follows  by a similar argument.
	\end{proof}
	
	The following theorem generalizes the result on the existence of critical chaos \cite{DRSV2} for a large class of log-correlated fields, and in particular verifies that $\star$-scale invariance is not needed a priori.  The mollification $X_\varepsilon:=\psi_\varepsilon*X$ in the stated result below may be performed by using any compactly supported and non-negative  smooth test function with integral 1. 
	
	\begin{theorem}\label{th:critical}
		Let $X$ be a log-correlated Gaussian field with a covariance given by the decomposition \eqref{eq:cov} with $g\in H^{s}_{loc}(U\times U)$. Then the corresponding critical chaos exists, i.e. there is a locally finite random Borel measure  $ \mu_{\sqrt{2d}}$ on $U$  such that for a  mollification $X_{\varepsilon}$ of the field $X$ we have the convergence 
		\begin{equation}\label{eq:crit_convergence}
		(\log(1/\varepsilon))^{1/2}\exp \Big(\sqrt{2d}X_\varepsilon (x)-d\, \E X_\varepsilon (x)^2\Big)dx\weaklyprob  \mu_{\sqrt{2d}}
		\end{equation}
		over any compact subset $K \subset U$. The random limit measure $\mu_{\sqrt{2d}}$ is independent of the mollification used.
	\end{theorem}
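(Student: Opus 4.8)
The plan is to deduce the statement from the already-known existence of critical chaos for genuinely $\star$-scale invariant fields, using Theorem A to strip away the term $g$ at the cost of a bounded continuous multiplicative factor. Concretely, fix a compactly supported seed covariance $k$ satisfying all hypotheses of Proposition \ref{pr:split}, including $k\in H^s_{loc}(\R^d)$ for some $s>d$, and let $Y$ be the associated $\star$-scale invariant log-correlated field on $\R^d$ from Proposition \ref{pr:split}(v). By part (vi), $C_Y(x,y)=\log(1/|x-y|)+g_0(x,y)$ with $g_0\in H^{s'}_{loc}(\R^{2d})$, $s'>d$. Since $g$ and $g_0$ both lie in $H^{d+\varepsilon}_{loc}$ for a suitable $\varepsilon>0$, while $\log(1/|\cdot|)\in L^1_{loc}$, Theorem A applies to $C_X,C_Y$ on $U\times U$ with $C_X-C_Y=g-g_0\in H^{d+\varepsilon}_{loc}(U\times U)$; fixing a bounded subdomain $U'$ with $K\subset U'$ and $\overline{U'}\subset U$, we obtain copies of $X$ and $Y$ on one probability space with $X=Y+G$ on $U'$, $G$ an a.s.\ Hölder continuous Gaussian field. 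Because the critical measure in \eqref{eq:crit_convergence} is a measurable functional of the field, it suffices to prove the convergence for this coupled copy.

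Next, compare the approximations. Put $X_\varepsilon=\psi_\varepsilon*X$, $Y_\varepsilon=\psi_\varepsilon*Y$, $G_\varepsilon=\psi_\varepsilon*G$, so that $X_\varepsilon=Y_\varepsilon+G_\varepsilon$ on $K$ once $\varepsilon$ is small. Since $g$ and $g_0$ are continuous up to the diagonal (Sobolev embedding, $s,s'>d>d/2$), expanding the logarithmic singularity gives, uniformly on $K$, $\E X_\varepsilon(x)^2=\log(1/\varepsilon)+c_\psi+g(x,x)+o(1)$ and $\E Y_\varepsilon(x)^2=\log(1/\varepsilon)+c_\psi+g_0(x,x)+o(1)$ with the same constant $c_\psi=\int\!\int\log(1/|u-v|)\psi(u)\psi(v)\,du\,dv$, whence $\E X_\varepsilon(\cdot)^2-\E Y_\varepsilon(\cdot)^2\to g(\cdot,\cdot)-g_0(\cdot,\cdot)$ uniformly on $K$; also $G_\varepsilon\to G$ uniformly on $K$ a.s. Therefore
\[
(\log(1/\varepsilon))^{1/2}\exp\!\big(\sqrt{2d}\,X_\varepsilon-d\,\E X_\varepsilon^2\big)\,dx=F_\varepsilon\cdot(\log(1/\varepsilon))^{1/2}\exp\!\big(\sqrt{2d}\,Y_\varepsilon-d\,\E Y_\varepsilon^2\big)\,dx,
\]
with $F_\varepsilon(x):=\exp\!\big(\sqrt{2d}\,G_\varepsilon(x)-d(\E X_\varepsilon(x)^2-\E Y_\varepsilon(x)^2)\big)\to F(x):=\exp\!\big(\sqrt{2d}\,G(x)-d(g(x,x)-g_0(x,x))\big)$ uniformly on $K$ in probability, $F$ being a.s.\ positive and continuous.

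For the $\star$-scale invariant field $Y$, the existence of critical chaos \cite{DRSV1,DRSV2} (together with \cite{JS,P}, to accommodate mollified approximations and for uniqueness of the limit) gives $(\log(1/\varepsilon))^{1/2}\exp(\sqrt{2d}\,Y_\varepsilon-d\,\E Y_\varepsilon^2)\,dx\weaklyprob\mu^Y$ over $K$ for the critical chaos $\mu^Y$ of $Y$; in particular $\sup_\varepsilon\mu^Y_\varepsilon(K)$ is bounded in probability. Combined with $F_\varepsilon\to F$ uniformly in probability, the argument of Lemma \ref{le:conv1}(iii) — which extends verbatim from random probability measures to locally finite measures with tight total mass — yields $F_\varepsilon\mu^Y_\varepsilon\weaklyprob F\mu^Y$ over $K$. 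This establishes \eqref{eq:crit_convergence} with $\mu_{\sqrt{2d}}:=F\mu^Y$; the measure is locally finite on $U$ since $F$ is a.s.\ locally bounded, $\mu^Y$ is locally finite, and $U'$ was arbitrary. Since $F$ depends only on $G=X-Y$ and on $g-g_0$ restricted to the diagonal, not on $\psi$, while $\mu^Y$ is mollifier-independent by \cite{JS,P}, the limit $\mu_{\sqrt{2d}}$ does not depend on the mollification; uniqueness of limits in probability further shows independence of the auxiliary choices of $k$ and of the coupling.

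The main obstacle is the exact bookkeeping of the Seneta–Heyde renormalization in the reduction: one must check that the $\psi$-dependent divergent part of $\E X_\varepsilon^2$ is precisely that of $\E Y_\varepsilon^2$, so that the ratio $F_\varepsilon$ of the two approximating densities converges to a genuine positive continuous function rather than to $0$ or $\infty$ — this is exactly what the hypothesis $g\in H^{d+\varepsilon}_{loc}$ secures, through the continuity of $g$ and $g_0$ up to the diagonal. The remaining issues — upgrading Lemma \ref{le:conv1}(iii) to unnormalized measures and checking that the two limit statements hold jointly on the coupled probability space — are routine.
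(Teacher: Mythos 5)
Your proposal is correct and follows essentially the same route as the paper: couple $X$ with a $\star$-scale invariant field via Theorem A, transfer the known critical-chaos convergence for that field to mollified approximations via the uniqueness results of \cite{JS} (and \cite{P}), and absorb the Hölder-continuous difference field together with the uniformly convergent variance discrepancy into a continuous positive prefactor handled by Lemma \ref{le:conv1}(iii). The paper's proof records the variance bookkeeping in the compact form $\E (Y)_\varepsilon(x)^2-\E X_\varepsilon(x)^2=((\psi_\varepsilon\otimes\psi_\varepsilon)*(g^Y-g^X))(x,x)$, which is the same cancellation of the divergent $\psi$-dependent part that you verify.
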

	\begin{proof}
		{ According to Theorem A we may write 
			$$
			X=Y+R,
			$$
			where $Y$ is a $\star$-scale invariant field for which \cite[Theorem 5]{DRSV2} holds and $R$ is a Gaussian field with a.s. locally H\"older continuous realizations.} Hence the critical chaos constructed from the approximations {$Y_t$} of {}$Y$ converges to a limit measure on $U$ as $t\to\infty$, let us call it $\nu_{\sqrt{2d}}$. Proposition \ref{pr:split} verifies that for any sequence $t_n\nearrow \infty$ the approximations {$Y_{t_n}$} of the field $Y$ satisfy the conditions of \cite[Theorem 1.1. and {Theorem} 4.4]{JS}, whence we deduce  the convergence in probability for the standard convolution approximations (we write {$(Y)_\varepsilon$} to denote a convolution approximation of {$Y$} -- this is to avoid confusion with {$Y_t$} which referred to the approximation of {$Y$} from Proposition \ref{pr:split})
		\begin{equation}\label{eq:convY}
		(\log(1/\varepsilon))^{1/2}\exp \Big(\sqrt{2d}({Y})_\varepsilon (x)-d\,\E ({Y})_\varepsilon (x)^2\Big)dx\weaklyprob \nu_{\sqrt{2d}} \qquad \textrm{on}\quad { U}.
		\end{equation}
		We may then write
		\begin{align*}
		&(\log(1/\varepsilon))^{1/2}\exp \Big(\sqrt{2d}X_\varepsilon (x)-d\,\E X_\varepsilon (x)^2\Big)dx
		\\
		&= f_\varepsilon(x)(\log(1/\varepsilon))^{1/2}\exp \Big(\sqrt{2d}({Y})_\varepsilon (x)-d\,\E ({Y})_\varepsilon (x)^2\Big)dx,
		\end{align*}
		where the random continuous function $f_\varepsilon$ on { $U$} is given by the expression
		$$
		f_\varepsilon(x):=\exp\big(d(\E ({Y})_\varepsilon (x)^2-\E X_\varepsilon (x)^2)\big)\exp\big(\sqrt{2d} (R)_\varepsilon(x)\big).
		$$
		{ Let $K \subset U$ be compact.} Obviously { $(R)_\varepsilon \to R$ in probability in $C(K)$}, so in view of Lemma \ref{le:conv1} (iii), to prove the stated convergence on { $K$} it only remains to prove that $\E ({Y})_\varepsilon (x)^2-\E X_\varepsilon (x)^2$ converges uniformly on { $K$} as $\varepsilon\to 0.$ 
		{ This however follows simply because both ${Y}$ and $X$ have covariances of the form $\log \frac{1}{|\cdot - \cdot|}$ \textit{plus} some H\"older continuous functions $g^Y$ and $g^X$, respectively. Therefore
			\[\E (Y)_\varepsilon (x)^2 - \E X_\varepsilon(x)^2 = ((\psi_\varepsilon \otimes \psi_\varepsilon)*(g^Y - g^X))(x,x),\]
			which clearly converges uniformly to $g^Y - g^X$. This finishes the proof.}
	\end{proof}
	
	We will next show that one can also construct the critical chaos via the so called derivative normalization, which is obtained by taking the derivative $-\frac{d}{d\beta}|_{\beta = \sqrt{2d}} e^{\beta X_\varepsilon(x) - \frac{\beta^2}{2} \E X_\varepsilon(x)^2}$ {and letting $\varepsilon\to 0$}.
	
	\begin{theorem}\label{th:derivative_martingale}
		Let $X$ be a log-correlated Gaussian field with a covariance given by the decomposition \eqref{eq:cov} with $g \in H^s_{loc}(U \times U)$ for some $s > d$. Then {for any compact $K\subset U$,} the derivative renormalization measures
		\[(-X_\varepsilon(x) + \sqrt{2d} \E X_\varepsilon(x)^2) e^{\sqrt{2d} X_\varepsilon(x) - d \E X_\varepsilon(x)^2}\]
		for the mollifications $X_\varepsilon$ of the field $X$ converge weak$^*$ in probability {over $K$} as $\varepsilon \to 0$ to $\sqrt{\pi/2} \mu_{\sqrt{2d}}$, where $\mu_{\sqrt{2d}}$ is the critical chaos measure given in Theorem~\ref{th:critical}.
	\end{theorem}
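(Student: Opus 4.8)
The plan is to follow the proof of Theorem~\ref{th:critical} almost word for word, with the Seneta--Heyde normalization of the auxiliary $\star$-scale invariant field replaced by its derivative normalization. Concretely, by Theorem~A one writes $X = Y + R$ on a bounded neighbourhood of the given compact $K\subset U$, where $Y$ is a $\star$-scale invariant field (so that the results of \cite{DRSV1,DRSV2,JS} apply to it) and $R$ is an a.s.\ locally Hölder continuous Gaussian field. The single new input needed beyond the ingredients of Theorem~\ref{th:critical} is the classical relation, for $\star$-scale invariant fields, between the derivative normalization and the Seneta--Heyde normalization: for a convolution approximation $(Y)_\varepsilon$ of $Y$, the derivative normalization measures $\nu'_\varepsilon$, given by
\[
d\nu'_\varepsilon(x) := \bigl(-(Y)_\varepsilon(x) + \sqrt{2d}\,\E (Y)_\varepsilon(x)^2\bigr)\, e^{\sqrt{2d}(Y)_\varepsilon(x) - d\,\E (Y)_\varepsilon(x)^2}\, dx,
\]
converge weak$^*$ in probability over $K$ to $\sqrt{\pi/2}\,\nu_{\sqrt{2d}}$, where $\nu_{\sqrt{2d}}$ is the Seneta--Heyde limit of $Y$ appearing in \eqref{eq:convY}; the value $\sqrt{\pi/2}$ of the constant is the content of \cite{DRSV1,DRSV2}, and the passage from the white-noise cone approximation used there to a general mollification follows from \cite[Theorem 1.1 and Theorem 4.4]{JS}.

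The next step is a purely algebraic rearrangement. Writing $X_\varepsilon = (Y)_\varepsilon + (R)_\varepsilon$, adding and subtracting $\sqrt{2d}\,\E (Y)_\varepsilon(x)^2$ in the linear prefactor, and pulling $e^{\sqrt{2d}(Y)_\varepsilon - d\,\E (Y)_\varepsilon^2}$ out of the exponential, one obtains
\[
\bigl(-X_\varepsilon(x) + \sqrt{2d}\,\E X_\varepsilon(x)^2\bigr)\, e^{\sqrt{2d}X_\varepsilon(x) - d\,\E X_\varepsilon(x)^2}\, dx
= h_\varepsilon(x)\, d\nu'_\varepsilon(x) + h_\varepsilon(x)\,e_\varepsilon(x)\, e^{\sqrt{2d}(Y)_\varepsilon(x) - d\,\E (Y)_\varepsilon(x)^2}\, dx,
\]
where $h_\varepsilon(x) := \exp\!\bigl(\sqrt{2d}\,(R)_\varepsilon(x) + d(\E (Y)_\varepsilon(x)^2 - \E X_\varepsilon(x)^2)\bigr)$ is exactly the prefactor $f_\varepsilon$ from the proof of Theorem~\ref{th:critical}, and $e_\varepsilon(x) := -(R)_\varepsilon(x) + \sqrt{2d}\,(\E X_\varepsilon(x)^2 - \E (Y)_\varepsilon(x)^2)$. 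Exactly as in that proof, since $g^X$ and $g^Y$ are Hölder continuous (Sobolev embedding and Proposition~\ref{pr:split}(vi)) and $(R)_\varepsilon \to R$ in probability in $C(K)$, both $h_\varepsilon$ and $e_\varepsilon$ converge, uniformly on $K$ and in probability, to continuous limits; in particular $\|h_\varepsilon\|_{C(K)}$ and $\|e_\varepsilon\|_{C(K)}$ are bounded in probability.

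It then remains to take $\varepsilon\to0$ in the two terms on the right. For the first, $\nu'_\varepsilon \weaklyprob \sqrt{\pi/2}\,\nu_{\sqrt{2d}}$ together with Lemma~\ref{le:conv1}(iii) yields $h_\varepsilon\, d\nu'_\varepsilon \weaklyprob \sqrt{\pi/2}\; h\cdot \nu_{\sqrt{2d}}$, where $h = \lim h_\varepsilon = f$ is the continuous function from the proof of Theorem~\ref{th:critical}. For the second, its total variation over $K$ is bounded by $\|h_\varepsilon e_\varepsilon\|_{C(K)}\,(\log(1/\varepsilon))^{-1/2}\cdot \int_K (\log(1/\varepsilon))^{1/2} e^{\sqrt{2d}(Y)_\varepsilon(x) - d\,\E (Y)_\varepsilon(x)^2}\, dx$, and the last integral is bounded in probability by \eqref{eq:convY}, so the whole bound tends to $0$ in probability and the second term converges weak$^*$ in probability to $0$. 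Hence the derivative renormalization measures converge weak$^*$ in probability over $K$ to $\sqrt{\pi/2}\; h\cdot \nu_{\sqrt{2d}}$. Finally, the proof of Theorem~\ref{th:critical} shows precisely that $h\cdot\nu_{\sqrt{2d}} = f\cdot\nu_{\sqrt{2d}} = \mu_{\sqrt{2d}}$, so the limit is $\sqrt{\pi/2}\,\mu_{\sqrt{2d}}$, as claimed.

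The only genuinely non-routine step I anticipate is the first one, namely having at hand the convergence of the derivative normalization of a $\star$-scale invariant field \emph{for a convolution approximation} and with the \emph{precise} constant $\sqrt{\pi/2}$; this is where one must lean on \cite{DRSV1,DRSV2} and \cite{JS}. Everything after that is the same soft bookkeeping (linear splitting plus Lemma~\ref{le:conv1}) as in the proof of Theorem~\ref{th:critical}. A minor point worth a sentence is that Lemma~\ref{le:conv1}(iii) and the vanishing-term estimate above are applied to finite — and, in the case of $\nu'_\varepsilon$, signed — measures rather than probability measures, but this causes no difficulty since all total masses and total variations in play are bounded in probability.
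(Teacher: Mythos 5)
Your proposal is correct and follows essentially the same route as the paper's proof: decompose $X = Y + R$ via Theorem~A, import the convergence of the derivative normalization of the mollified $\star$-scale invariant field to $\sqrt{\pi/2}\,\nu_{\sqrt{2d}}$, and transfer it to $X$ by the same algebraic splitting, uniform convergence of the regular prefactor, and Lemma~\ref{le:conv1}-type bookkeeping used for Theorem~\ref{th:critical}, with the error term killed by the missing $(\log(1/\varepsilon))^{1/2}$ factor. The only discrepancy is bibliographic: the paper obtains your one non-routine input (derivative normalization for convolution approximations of $\star$-scale invariant fields, with the constant $\sqrt{\pi/2}$) directly from \cite[Theorem~1.2]{P}, rather than from \cite{DRSV1,DRSV2} combined with \cite{JS}, since the latter uniqueness results concern the Seneta--Heyde normalization rather than the derivative one.
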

	
	\begin{proof}
		{ We again use Theorem~A and write $X = Y + R$ with $Y$ a $\star$-scale invariant field which this time satisfies the assumptions of \cite[Theorem~1.2.]{P}.}
		Thus, \cite[Theorem~1.2.]{P} yields that  the derivative renormalization measures constructed from $({Y})_\varepsilon$ converge weak$^*$ in probability to $\sqrt{\pi/2}\nu_{\sqrt{2d}}$, where $\nu$ is as in the proof of Theorem~\ref{th:critical}. Next we split the renormalization in two parts by writing
		\begin{align*}
		& (-X_\varepsilon(x) + \sqrt{2d} \E X_\varepsilon(x)^2) e^{\sqrt{2d} X_\varepsilon(x) - d \E X_\varepsilon(x)^2} \\
		= & (-({Y})_\varepsilon(x) + \sqrt{2d} \E ({Y})_\varepsilon(x)^2) e^{\sqrt{2d} ({Y})_\varepsilon(x) - d \E ({Y})_\varepsilon(x)^2} e^{\sqrt{2d} (R)_\varepsilon(x) - d ( \E X_\varepsilon(x)^2 - \E ({Y})_\varepsilon(x)^2 )} \\
		& + (-(R)_\varepsilon(x) + \sqrt{2d} (\E X_\varepsilon(x)^2 - \E ({Y})_\varepsilon(x)^2)) e^{\sqrt{2d} X_\varepsilon(x) - d \E X_\varepsilon(x)^2}.
		\end{align*}
		{ On any compact set $K \subset U$} the second term goes to $0$ weak$^*$ in probability (by e.g. Theorem~\ref{th:critical}) since the factor $(-(R)_\varepsilon(x) + \sqrt{2d} (\E X_\varepsilon(x)^2 - \E ({Y})_\varepsilon(x)^2))$ is almost surely uniformly bounded  and converges uniformly {in probability} as $\varepsilon\to 0$, a fact which can be deduced as in the proof of Theorem~\ref{th:critical}. Similarly, the first term converges to $\sqrt{\pi/2} \mu_{\sqrt{2d}}$ as we wanted.
	\end{proof}
	
	{
		Finally we note\footnote{We thank A. Sep\'ulveda for asking us a question that led to  this application.}  that in two dimensions the critical chaos can also be seen as a suitable normalized limit of subcritical chaoses as $\beta \to 2$ along the real axis. As opposed to the two previous results, we don't choose now as a reference field a $\star$-scale invariant one, but the Gaussian free field (GFF) with Dirichlet boundary conditions in a suitable planar domain -- more precisely, it is the centered log-correlated Gaussian field whose covariance is given by the Dirichlet Green's function of the domain. As first conjectured in \cite[Conjecture 9]{DRSV1}, it is natural to expect a version of the following result to be true in all dimensions, but we lack a suitable reference field for which the result would have been proven.

		\begin{theorem}\label{thm:subcritical_to_critical}
			Let $X$ be a log-correlated Gaussian field on a planar domain $U$ with a covariance given by the decomposition \eqref{eq:cov} with $g \in H^s_{loc}(U \times U)$ for some $s > 2$. Then {over any compact $K\subset U$} we have the convergence in probability
			\[\lim_{\beta \nearrow 2} \frac{\mu_\beta}{2 - \beta} = \sqrt{2\pi} \mu_{2}\]
			in the space of non-negative Radon measures under the weak$^*$-topology,
			where $\mu_{2}$ is the critical chaos measure given in Theorem~\ref{th:critical} and 
			\[
			\mu_\beta(dx) = \lim_{\varepsilon \to 0} e^{\beta X_\varepsilon(x) - \frac{\beta^2}{2} \E X_\varepsilon(x)^2} \, dx\]
			is the subcritical chaos measure with parameter $\beta < 2$.
		\end{theorem}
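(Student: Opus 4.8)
The plan is to reduce to the $\star$-scale invariant / cascade-type case by invoking Theorem~A, but now with the Gaussian free field on a slightly larger domain as the reference field, and to transfer along the decomposition the corresponding statement for the GFF. The first step is to fix a compact $K\subset U$ and choose a bounded subdomain $U'$ with $K\subset U'$ and $\overline{U'}\subset U$; since the covariance of the GFF with Dirichlet boundary conditions in a planar domain $V\supset \overline{U'}$ is $\log\frac{1}{|x-y|}$ plus a harmonic (hence $C^\infty$, in particular $H^{d+\varepsilon}_{loc}$) function, the difference $C_X-C_{\mathrm{GFF}}$ lies in $H^{d+\varepsilon}_{loc}(U'\times U')$ under our hypothesis $g\in H^s_{loc}$, $s>2=d$. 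Hence Theorem~A lets us realize, on a common probability space, $X = Z + R$ on $U'$, where $Z$ is (a copy of) the Dirichlet GFF restricted to $U'$ and $R$ is an a.s. H\"older continuous Gaussian process on $U'$ (not independent of $Z$, but this is immaterial here since the argument only uses uniform convergence of the relevant correction terms).

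The second step is to record, or cite, the statement for the GFF itself: for the two-dimensional Dirichlet GFF $Z$, the mollified subcritical measures $\mu_\beta^Z$ satisfy $\lim_{\beta\nearrow 2}\mu_\beta^Z/(2-\beta)=\sqrt{2\pi}\,\mu_2^Z$ weak$^*$ in probability, where $\mu_2^Z$ is the critical measure of $Z$. This is where one uses that the GFF is a model for which the subcritical-to-critical convergence along the real axis is available in the literature (it is proven in the GFF / Liouville setting, e.g. via the relation to the derivative martingale and Seneta--Heyde norming). The third step is the transfer: writing $X_\varepsilon = Z_\varepsilon + R_\varepsilon$ for the common mollification, one has
\[
e^{\beta X_\varepsilon(x)-\frac{\beta^2}{2}\E X_\varepsilon(x)^2}
= e^{\beta R_\varepsilon(x)}\,e^{-\frac{\beta^2}{2}\big(\E X_\varepsilon(x)^2-\E Z_\varepsilon(x)^2\big)}\cdot e^{-\beta\,\E Z_\varepsilon(x)R_\varepsilon(x)}\cdot e^{\beta Z_\varepsilon(x)-\frac{\beta^2}{2}\E Z_\varepsilon(x)^2},
\]
and each of the first three factors is a continuous random function on $K$ that, by the same reasoning as in the proofs of Theorems~\ref{th:critical} and \ref{th:derivative_martingale} (covariances of $X$ and $Z$ differ by a H\"older continuous function, and the cross term $\E Z_\varepsilon R_\varepsilon$ likewise stabilizes), converges uniformly in probability as $\varepsilon\to 0$, uniformly in $\beta$ near $2$, to the continuous function $e^{\beta R(x)}e^{-2\,\Gamma(x,x)}$ where $\Gamma$ is the H\"older continuous covariance-correction; at $\beta=2$ this is exactly the Radon--Nikodym factor $h(x)$ relating $\mu_2$ to $\mu_2^Z$. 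Applying Lemma~\ref{le:conv1}(iii) one gets $\mu_\beta = h_\varepsilon^{(\beta)}\cdot\mu_\beta^Z \to $ (the appropriate limit), and dividing by $2-\beta$ and sending $\beta\nearrow 2$ through the GFF statement yields $\mu_\beta/(2-\beta)\to \sqrt{2\pi}\,h\cdot\mu_2^Z = \sqrt{2\pi}\,\mu_2$ weak$^*$ in probability over $K$.

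The main obstacle is the \textbf{double limit}: the statement involves $\varepsilon\to 0$ (defining each $\mu_\beta$) followed by $\beta\nearrow 2$, and one must ensure these interact well with the weak$^*$-in-probability convergence. The clean route is to import the $\beta\nearrow 2$ convergence for the reference field $Z$ as a black box (so that the $\varepsilon$-limit is already taken there), and then only ever take an $\varepsilon$-limit of the \emph{correction factors}, which is uniform in $\beta$ on a neighborhood of $2$ because the correction factors depend on $\beta$ only through the continuous prefactors $e^{\beta R_\varepsilon}$, $e^{-\frac{\beta^2}{2}(\cdots)}$, $e^{-\beta(\cdots)}$ whose $\varepsilon$-convergence is manifestly locally uniform in $\beta$. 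One must also check the harmless point that the normalization constant $\sqrt{2\pi}$ is unaffected by the H\"older correction (it is, since the correction multiplies $\mu_2^Z$ and $\mu_2$ by the \emph{same} factor $h$, which is precisely the content of the proof of Theorem~\ref{th:critical}). Finally, a small bookkeeping step: since $K$ was arbitrary with $K\subset U$, and the limit measure does not depend on $K$ or on the choice of $V$, one obtains convergence in the space of non-negative Radon measures on $U$ under the weak$^*$-topology.
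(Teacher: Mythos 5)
Your proposal is correct and follows essentially the same route as the paper: decompose $X$ via Theorem A with a Dirichlet GFF as the reference field (the paper does this on balls $B(x_0,r)$ with the GFF on $B(x_0,2r)$ and then patches, while you work directly on a subdomain $U'\supset K$ with the GFF on a slightly larger domain --- an immaterial difference), import the $\beta\nearrow 2$ statement for the GFF from the literature (the paper cites Aru--Powell--Sep\'ulveda), and transfer it through the H\"older-continuous correction factor via Lemma~\ref{le:conv1}(iii). One small algebraic slip: the factor $e^{-\beta\,\E Z_\varepsilon(x)R_\varepsilon(x)}$ in your factorization is spurious, since the cross term $-\beta^{2}\,\E Z_\varepsilon(x)R_\varepsilon(x)$ is already contained in $-\tfrac{\beta^{2}}{2}\bigl(\E X_\varepsilon(x)^{2}-\E Z_\varepsilon(x)^{2}\bigr)$; dropping it gives the exact identity, matching the paper's correction factor $e^{\beta R(x)-\frac{\beta^{2}}{2}\E R(x)^{2}-\beta^{2}\E R(x)L(x)}$.
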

		
		\begin{proof}[Proof of Theorem~\ref{thm:subcritical_to_critical}]
			{If we have a finite number of random variables, each converging in probability, their sum also converges in probability, so using regularity of the critical measure} it is enough to consider convergence in an open ball $B := B(x_0,r) \subset U$ such that $B(x_0,2r) \subset U$. Then by Theorem~A we may inside $B$ write $X = L + R$, where $L$ is a GFF on $B(x_0,2r)$ with Dirichlet boundary conditions, and $R$ is a H\"older-regular field. Let
			\[d\mu_\beta^{GFF}(x) = \lim_{\varepsilon \to 0} e^{\beta L_\varepsilon(x) - \frac{\beta^2}{2} \E L_\varepsilon(x)^2} \, dx\]
			and let $\mu_2^{GFF}(x)$ be the corresponding critical chaos. Then by \cite[Theorem~1.1]{APS} we have
			\[\frac{\mu_\beta^{GFF}}{2 - \beta} \to \sqrt{2\pi} \mu_2^{GFF}\]
			as $\beta \nearrow 2$, with convergence weak$^*$ in probability {(over $\overline{B}$)}. Clearly also, as $\beta\nearrow 2$,
			\[e^{\beta R(x) - \frac{\beta^2}{2} \E R(x)^2 - \beta^2 \E R(x) L(x)} \to e^{2 R(x) - 2 \E R(x)^2 - 4 \E R(x)L(x)}\]
			in probability in $C({\overline{B}})$. Here $\E R(x) L(x)$ can be understood as the limit as $\varepsilon \to 0$ of
			\[\frac{1}{2} (\E X_\varepsilon(x)^2 - \E L_\varepsilon(x)^2 - \E R_\varepsilon(x)^2).\]
			The claim now follows from Lemma~\ref{le:conv1}.
		\end{proof}
	}
	\begin{remark}\label{rem:scmax}
		We note that based on our result in the critical case, a natural question a reader familiar with multiplicative chaos literature might have is can we prove something similar in the supercritical case and what can we say about the maximum of the field. While there are indeed things that can be proven about supercritical multiplicative chaos and the maximum of the field using our decomposition results, we fear that obtaining as precise results as in the critical case would require a significant amount of work. Nevertheless, we sketch here a few arguments concerning supercritical chaos utilizing our Theorem A.
		
		\medskip
		
		As proven in \cite[Corollary 2.3]{MRV}, given a $\star$-scale invariant log-correlated field $Y$ on some domain $U\subset \R^d$, with $Y_t$ being the $\star$-scale invariant cut-off parametrized such that $\E Y_t(x)^2=t$, and under suitable regularity assumptions on the associated seed covariance $k$, for $\beta>\sqrt{2d}$, the family of random measures
		\[
		t^{\frac{3\beta}{2\sqrt{2d}}}e^{t(\frac{\beta}{\sqrt{2}}-\sqrt{d})^2}e^{\beta Y_t(x)-\frac{\beta^2}{2}t}dx
		\]
		converges in law, as $t\to \infty$,  with respect to the weak convergence of measures to a non-trivial purely atomic limiting measure, whose law can be characterised explicitly in terms of the law of the critical measure -- a property known as freezing. We refer readers interested in further details to \cite{MRV}.
		
		Now given an arbitrary log-correlated field $X$ on $U$, we wish to construct a random measure which would give a precise definition of $e^{\beta X(x)}dx$ for $\beta>\sqrt{2d}$ in the same sense as for $Y$. To do this, we use Theorem A to write $X=Y+G$ for some H\"older-continuous Gaussian field $G$. We then introduce the following, rather non-canonical, approximation of $X$: for $t>0$, let $X_t(x):=Y_t(x)+G(x)$. With some effort, which we choose not to document here as we suspect it to be of little use to the reader, one can then convince oneself (using of course \cite[Corollary 2.3]{MRV}) that 
		\[
		t^{\frac{3\beta}{2\sqrt{2d}}}e^{t(\frac{\beta}{\sqrt{2}}-\sqrt{d})^2}e^{\beta X_t(x)-\frac{\beta^2}{2}t}dx
		\]
		converges in law (with respect to the weak topology) as $t\to \infty$ to something.
		
		Thus what Theorem A can be used for in this setting is constructing a candidate for what $e^{\beta  X(x)}dx$ should mean for $\beta>\sqrt{2d}$. The drawback being that the construction is rather non-canonical in that it involves coupling to an arbitrary $\star$-scale invariant field, and it does not seem to us that it is obvious that the law of the limiting measure is the same for all $\star$-scale invariant fields. It is also not clear how easily one can get a hold of basic properties of the measure such as freezing. From the point of view of applications, from say random matrix theory, where the associated field is Gaussian only asymptotically in some sense, it would be more satisfying if one had a construction for the supercritical measure say in terms of convolution approximations instead of a coupling to $\star$-scale invariant ones as these might be impossible to realize in the non-Gaussian setting. We suspect that resolving such uniqueness and freezing questions requires a non-trivial amount of further work.
		
		The difficulties one runs into when trying to use our decomposition results to relate the maximum of a general log-correlated field to the maximum of say a $\star$-scale invariant one are rather similar in nature. We omit further discussion on this and leave formulating precise statements to the reader.  \enddef
	\end{remark}
	
	\section{Analytic dependence on $\beta$}
	
	J. Barral \cite{B} made the important observation that evaluations of subcritical cascade measures against test functions continue analytically in the intermittency parameter $\beta$ to the the domain
	\begin{equation}\label{eq:holdomain}
	A\; :=\; \textrm {span}\big(\pm\sqrt{2d}\cup B(0,\sqrt{d})\big),
	\end{equation}
	i.e., to the open domain that is the {union} of the ball $B(0,\sqrt{d})$ and the quadrilateral domain defined by  the four lines passing through points $\pm\sqrt{2d}$ at angles $\pm \pi/4$. This is illustrated in Figure~\ref{fig:the_eye}.
	
	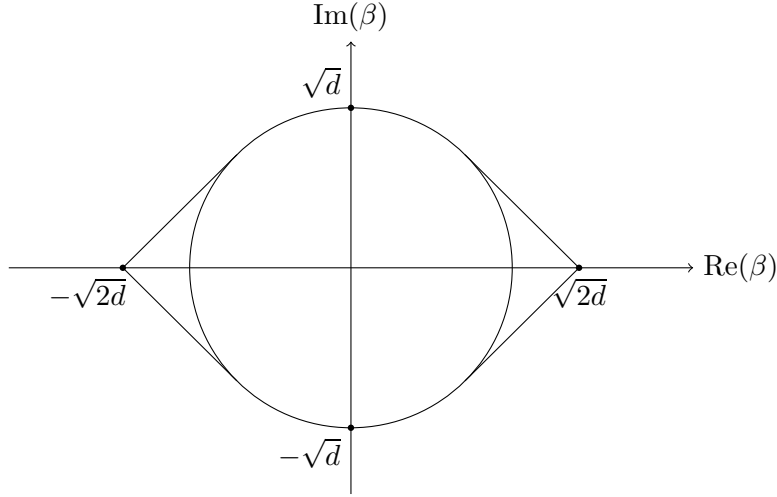
\begin{figure}[h]
		\centering
		\begin{tikzpicture}[x=1.5cm,y=1.5cm]
		\def\sqrttwo{1.4142135623730951}
		
		\draw[->] (-3,0) -- (3,0) node[right] {$\Re(\beta)$};
		\draw[->] (0,-2) -- (0,2) node[above] {$\Im(\beta)$};
		
		\draw (0,0) circle (\sqrttwo);
		\draw (1,1) -- (2,0) -- (1,-1);
		\draw (-1,1) -- (-2,0) -- (-1,-1);
		\fill (0,\sqrttwo) circle (1.2pt) node[anchor=south east] {$\sqrt{d}$};
		\fill (0,-\sqrttwo) circle (1.2pt) node[anchor=north east] {$-\sqrt{d}$};
		\fill (-2,0) circle (1.2pt) node[below left=0 and -5pt] {$-\sqrt{2d}$};
		\fill (2,0) circle (1.2pt) node[anchor=north] {$\sqrt{2d}$};
		\end{tikzpicture}
		
		\caption{The extended subcritical regime for complex $\beta$, namely the set $A$ from \eqref{eq:holdomain}. The circle corresponds to the $L^2$-phase.}\label{fig:the_eye}
	\end{figure}
	
	In the case of multiplicative chaos, this is easy to check in the $L^2$-range $|\beta|<\sqrt{d}$. In \cite[Appendix 1]{AJKS} it was noted, that the analytic dependence also holds for the Gaussian chaos constructed from certain $1$-dimensional essentially $\star$-scale invariant fields. The case of more general fields has remained an open question. 
	
	Our next result resolves positively this problem with the aid of the decomposition of Theorem \ref{th:deco}. For the reader's  convenience we shall give all the details of the argument here, although after Theorem \ref{th:deco} a considerable portion of the proof basically repeats the ideas in  \cite{B,AJKS} and \cite[Lemma 15]{J}.

	\begin{theorem}\label{th:holomorphic} Let $X$ be a log-correlated field in a domain $U\subset\R^d$  with the covariance structure \eqref{eq:cov}, where $g\in H^s_{loc}(U\times U)$   with $s>d$. Then for every $\beta\in A$ there exist  $H^{-d}_{loc}(U)$-valued  random variables $\mu_\beta$, all on the same probability space, which for  any  fixed $\beta\in (0,\sqrt{2d})$ almost surely agrees  with the standard definition of the chaos $``\exp(\beta X)"$. Moreover, for any $\psi\in C_c^\infty (U)$, almost surely  the map $\beta\mapsto\psi\mu_\beta$ is holomorphic in $A$ with values in $H^{-d}(\R^d)$.
	\end{theorem}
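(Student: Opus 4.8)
The plan is to reduce the theorem to the local decomposition $X=L+R$ of Theorem~\ref{th:deco} and then run a vector-valued martingale argument inside the Hilbert-space valued Hardy spaces $\mathcal{H}^p(B',H^{-d}(\R^d))$ introduced in Section~\ref{sec:spaces}, following Barral~\cite{B}, \cite[Appendix~1]{AJKS} and \cite[Lemma~15]{J}. By a partition of unity it suffices to construct $\psi\mu_\beta\in H^{-d}(\R^d)$ and prove its holomorphy in $\beta$ for $\psi\in C_c^\infty(B)$ with $B$ a small ball around an arbitrary point $x_0\in U$; the local pieces will then be glued using uniqueness of analytic continuation in the connected domain $A$ together with a.s.\ agreement with the standard chaos on $(0,\sqrt{2d})$, which has an accumulation point in $A$. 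All objects are built on one fixed extension of the underlying probability space.

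On $B$ apply Theorem~\ref{th:deco} to write $X=L+R$ with $L,R$ independent, $R$ a.s.\ H\"older continuous, and $L$ almost $\star$-scale invariant with compactly supported seed covariance; thus $L$ comes with the standard approximation $(L_t)_{t\ge 0}$ of Proposition~\ref{pr:split} and the cascade-like independence of Lemma~\ref{le:cascadelike}. Put $\mu_{\beta,t}(dx):=e^{\beta R(x)-\frac{\beta^2}{2}\E R(x)^2}\,e^{\beta L_t(x)-\frac{\beta^2}{2}\E L_t(x)^2}\,dx$; since $R\perp L$ this is the renormalized exponential of the standard approximation $R+L_t$ of $X$. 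For each fixed $\beta\in\C$ the independence of increments of $L_t$ and the identity $\E e^{\beta(L_{t'}-L_t)(x)}=e^{\frac{\beta^2}{2}\E(L_{t'}-L_t)(x)^2}$ show that $t\mapsto\psi\mu_{\beta,t}$ is an $H^{-d}(\R^d)$-valued martingale, while for each fixed $t$ the map $\beta\mapsto\psi\mu_{\beta,t}$ is entire with values in $H^{-d}(\R^d)$. Hence, for a ball $B'$ with $\overline{B'}\subset A$ in the $\beta$-plane, $M_t:=(\beta\mapsto\psi\mu_{\beta,t})$ is a martingale with values in the Banach space $\mathcal{H}^p(B',H^{-d}(\R^d))$.

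The crucial estimate — and the main obstacle — is that for every such $B'$ there is $p=p(B')\in(1,2]$ with $\sup_t\E\|M_t\|_{\mathcal{H}^p(B',H^{-d}(\R^d))}^p<\infty$, equivalently $\sup_t\sup_{\beta\in B'}\E\|\psi\mu_{\beta,t}\|_{H^{-d}(\R^d)}^p<\infty$. On the $L^2$-phase $\overline{B'}\subset B(0,\sqrt d)$ one takes $p=2$: expressing the $H^{-d}$ inner product of two finite measures through the positive bounded kernel $G_{2d}=\mathcal{F}^{-1}\big((1+|\cdot|^2)^{-d}\big)$ and computing the Gaussian expectation gives
\[
\E\|\psi\mu_{\beta,t}\|_{H^{-d}}^2\;\lesssim\;\E\big[e^{2|\Re\beta|\,\|R\|_{L^\infty(\mathrm{supp}\,\psi)}}\big]\int\!\!\int\big|G_{2d}(x-y)\,\psi(x)\,\psi(y)\big|\,e^{|\beta|^2 C_{L_t}(x,y)}\,dx\,dy,
\]
which is bounded uniformly in $t$ since $|\beta|^2<d$, $C_{L_t}(x,y)=\log\frac{1}{|x-y|}+O(1)$, and the Gaussian prefactor coming from the H\"older field has all moments. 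For the remaining part of $A$, i.e.\ $|\Re\beta|+|\Im\beta|<\sqrt{2d}$, one must take $p\in(1,2)$ and invoke the cascade-type moment computations of \cite{B} (see also \cite[Lemma~15]{J}): the cone structure of $L$ and Lemma~\ref{le:cascadelike} permit, exactly as for multiplicative cascades, a dyadic decomposition of $\mu^L_{\beta,t}$ into conditionally independent blocks and an application of a $p$-th moment inequality, and $A$ is precisely the set of $\beta$ for which some $p>1$ makes the resulting bound uniform in $t$; the factor $e^{\beta R}$ is again absorbed under the integral as a random prefactor with all moments. Carrying out this estimate cleanly — keeping track of $\beta$ rather than $|\beta|$, so that the oscillation of the complex measure is not discarded — is the technical heart of the argument.

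Granting the bound, $(M_t)$ is an $L^p$-bounded martingale valued in $\mathcal{H}^p(B',H^{-d}(\R^d))$, a reflexive separable space and hence one with the Radon--Nikodym property; by the vector-valued martingale convergence theorem (\cite{HNVW}) $M_t$ converges a.s.\ and in $L^p$ in $\mathcal{H}^p(B',H^{-d}(\R^d))$. Its limit is a.s.\ a holomorphic map $\beta\mapsto\psi\mu_\beta\colon B'\to H^{-d}(\R^d)$, and by \eqref{eq:hardy bound} the convergence $\psi\mu_{\beta,t}\to\psi\mu_\beta$ also holds in $H^{-d}(\R^d)$ on the concentric half-ball. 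Covering $A$ by such balls $B'$ yields one a.s.\ holomorphic map on $A$. For real $\beta\in(0,\sqrt{2d})$, $R+L_t$ is a standard approximation of $X$ in the sense of Definition~\ref{def:standard}, so by uniqueness of subcritical multiplicative chaos the limit $\psi\mu_\beta$ agrees a.s.\ with $\psi$ times the standard chaos; this identifies our $\mu_\beta$ with the usual one and, applied on overlapping balls, gives the consistency of the local constructions needed for the partition-of-unity patching, which finishes the proof.
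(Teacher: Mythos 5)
Your overall architecture is the same as the paper's: decompose $X=L+R$ locally via Theorem~\ref{th:deco}, regard $\beta\mapsto\psi\mu_{\beta,t}$ as a martingale with values in the Hilbert-space valued Hardy space $\mathcal{H}^p(B',H^{-d}(\R^d))$, prove a uniform $L^p$ bound, invoke the Radon--Nikodym property and vector-valued martingale convergence, use \eqref{eq:hardy bound} to upgrade to locally uniform convergence of analytic functions, identify the limit with the standard chaos for real subcritical $\beta$ through the standard approximation property, and glue with a partition of unity (your gluing via analytic continuation from a dense set of real $\beta$'s is a harmless variant of the paper's, which instead sums the local pieces directly and separately checks $\sigma(X)$-measurability).

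There is, however, a genuine gap exactly where you yourself point: the uniform-in-$t$ $L^p$ bound for complex $\beta$ outside the ball $B(0,\sqrt d)$ is asserted, not proved, and this is the step that actually produces the domain $A$. Appealing to Barral's cascade computations is not enough, because $L$ is not a cascade: at a fixed time $t$ the restrictions of $L_t$ (hence of the measure $e^{\beta L_t-\frac{\beta^2}{2}\E L_t^2}\,dx$) to disjoint dyadic blocks are \emph{not} conditionally independent -- they share every scale coarser than the block size -- so the ``dyadic decomposition of $\mu^L_{\beta,t}$ into conditionally independent blocks'' you invoke does not literally exist. The paper instead telescopes in $t$: it estimates $\E\big|\int\psi\,(\nu_{n+1,\beta}-\nu_{n,\beta})\big|^p$, using that the increment $\Delta_n=L_{n+1}-L_n$ is independent of $L_n$ and, by Lemma~\ref{le:cascadelike}, independent across cubes of side $\sim e^{-n}$ grouped into $2^d$ separated families; the von Bahr--Esseen inequality plus an explicit Gaussian moment computation then yield geometric decay with exponent $c_\beta=(p^2-p)(\Re\beta)^2/2+p(\Im\beta)^2/2-d(p-1)$, which can be made negative for some $p\in(1,2)$ precisely when $(\Re\beta)^2+\tfrac{1}{p-1}(\Im\beta)^2<2d/p$, i.e.\ locally on $A$. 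Without this computation (or an equivalent one, including the conversion of the scalar bound into the $H^{-d}$-norm moment bound via Fourier coefficients and concavity of $x\mapsto x^{p/2}$), your argument establishes holomorphy only on the $L^2$ ball, where your $p=2$ estimate is indeed correct; the claim that ``$A$ is precisely the set of $\beta$ for which some $p>1$ works'' is the statement that needed proof, not a citation.
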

	\begin{proof} 
		We pick an arbitrary $x_0\in U$ and claim first that there is a neighbourhood $U_{x_0}:=B(x_0,r_{x_0})$ with $r_{x_0}\in (0,1/3)$ such that  for any fixed $\psi_{x_0}\in C^\infty_c(U_{x_0})$ there  exists a  random holomorphic $H^{-d}(\R^d)$-valued map 
		$$
		A\ni \beta\; \mapsto\; \eta_{x_0}(\beta)\in H^{-d}(\R^d),
		$$
		where for any fixed real value  of the inverse temperature $\beta\in \R\cap A$ there is  an almost sure agreement with the standard chaos measure:
		\begin{equation}\label{eq:coinc}
		\eta_{x_0}(\beta)="\psi_{x_0}\exp(\beta X)"\qquad \textrm{almost surely as elements in}\quad H^{-d}(\R^d). 
		\end{equation}
		
		Let $X= {L+R}$ (a.s.) be a decomposition\footnote{We may move originally to a copy of $X$ in a probability space where there is room for producing countably many such a.s. decompositions of $X$ simultaneously, as this will be required  for in the proof.} of $X$ on $U_{x_0}$ as in Theorem \ref{th:deco} (ii), where we choose $k$ to be smooth and supported on $B(0,1)$. Let $\beta_0\in A$ be arbitrary. We will first indicate how the claim can be deduced from the fact that for arbitrary $\beta_0\in A$ there exists an exponent $p=p(\beta_0)\in(1,2)$  and a radius $\delta:=\delta_{\beta_0}>0$ so that by denoting $B_{\beta_0}:=B(\beta_0,\delta)$ we have $\overline{B_{\beta_0}}\subset A$ and the uniform estimate
		\begin{equation}\label{eq:yks}
		\E \Big| \int_{U_{x_0}} \psi(x)\nu_{n,\beta}(x)dx \Big|^p \leq  C\|\psi\|_{L^\infty(U_{x_0})}^p, 
		\end{equation}
		holds true for all integers $n\geq 1$, $\beta\in \overline{B_{\beta_0}} $ and $\psi\in L^\infty(U_{x_0}),$ {with the constant $C$ independent of these quantities.} {Above we wrote}
		$$
		\nu_{n,\beta}(x):=  \exp\big( \beta {L_n}(x)-\frac{\beta^2}{2}\E {L_n}(x)^2\big), \qquad n\in \N,
		$$
		where ${L_n}$ was defined Proposition \ref{pr:split} (now $t=n$).
		We will postpone the proof of \eqref{eq:yks} and first show how it implies our claim.
		
		The inequality \eqref{eq:yks} transfers to
		\begin{equation}\label{eq:kaks}
		\E \Big| \int_{U_{x_0}} \psi(x)\mu_{n,\beta}(x)dx \Big|^p \leq  C\|\psi\|_{L^\infty(U_{x_0})}^p,
		\end{equation}
		with 
		\begin{align*}
		\mu_{n,\beta}(x)&:= \ \exp\Big( \beta \big({L_n}(x)+{R}(x)-\frac{\beta^2}{2}\E\big( { L_n}(x)+ {R}(x)\big)^2\Big)\\
		&= 
		\exp\big( \beta {R}(x)-\frac{\beta^2}{2}\E {R}(x)^2\big)\nu_{n,\beta}(x),
		\end{align*}
		which yields on $U_{x_0}$  an approximation   of the actual chaos  we are after. Namely, \eqref{eq:kaks} follows as one applies \eqref{eq:yks} to  $\exp\big( \beta {R}(x)-\frac{\beta^2}{2}\E {R}(x)^2\big)\psi(x)$ instead of $\psi$, conditions on ${L}$, and the fact that by Fernique's theorem  $\E \| \exp\big( \beta  {R}(x)-\frac{\beta^2}{2}\E {R}(x)^2\big)\|^p_{L^\infty(U_{x_0})}<\infty $ for all $p<\infty$. Clearly  $B_{\beta_0}\ni \beta\mapsto \psi_{x_0}\mu_{n, \beta}$ takes values in the ${H}^{-d}(\R^d)$ valued Hardy-space $\mathcal{H}^{p}(B_{\beta_0},H^{-d}(\R^d))$ (recall here the definition and basic properties of Hilbert space valued Hardy spaces from Section \ref{sec:spaces}). Thus, by construction, $(\psi_{x_0}\mu_{n, \beta})_{n\geq 1}$ is a $\mathcal{H}^{p}(B_{\beta_0}, H^{-d}(\R^d))$-valued  martingale, and its $p$-boundedness obviously  follows as soon as we establish the uniform pointwise estimate
		\begin{equation}\label{eq:kol}
		\E \|\psi_{x_0}\mu_{n, \beta}\|^p_{ H^{-d}(\R^d)}\leq C'\qquad\textrm{for all}\quad n\geq 1\quad \textrm{and}\quad \beta\in\partial B_{\beta_0}.
		\end{equation}
		\newcommand{\half}{\frac{1}{2}}
		In order to obtain this bound, by translation invariance we may assume that $x_0=0$. Since  then supp$(\psi_0)\subset [-1/3,1/3]^d,$  we may compute the Sobolev norm in terms of the  the standard Fourier coefficients\footnote{This is a well-known  fact, which follows easily from the definition for integer values of the smoothness index $s$, and generalizes  by interpolation to all $s\geq 0.$}.  In particular,  the concavity of $x\mapsto x^{p/2}$ for $x\geq 0$ (recall that $1<p<2$) and   \eqref{eq:kaks} yield
		\begin{align*} 
		&\E \|\psi_{x_0}\mu_{n, \beta}\|^p_{ H^{-d}(\R^d)}\lesssim \E\big( \sum_{k\in\Z^d}(1+|k|^2)^{-d}|\widehat{(\psi_{x_0}  \mu_{n, \beta})}(k)|^2\big)^{p/2}\\
		&\leq \sum_{k\in\Z^d}(1+|k|^2)^{-dp/2}\E|\widehat{(\psi_{x_0} \mu_{n, \beta})}(k)|^{p} < C.
		\end{align*}
		Hence we have a bounded martingale with values in the Hardy space, so again by \cite[Theorem 3.61, Theorem 1.95]{HNVW} we have almost sure convergence   in the Hardy space $\mathcal{H}^{p}(B(\beta_0,\delta),H^{-d}(\R^d))$. Finally, one observes by \eqref{eq:hardy bound}  that due to the convergence in the Hardy space we have almost sure uniform pointwise convergence in $B(\beta_0,\half \delta)$  of the sequence $\psi_{x_0}\mu_{\beta,n}$.  
		
		We may then cover $A$ by countably many such discs $B(\beta_0,\half \delta)$. It follows that almost surely the sequence $\psi_{x_0} \mu_{n, \beta}$ of analytic $H^{-d}(\R^d)$-valued functions on $A$ converges locally uniformly. We denote the almost sure limit, that is then an analytic $H^{-d}(\R^d)$-valued random function on the domain $A$ by
		$
		\eta_{x_0}.
		$
		The construction is completed as soon as we check \eqref{eq:coinc}. Thus, let $\beta\in (-\sqrt{2d},\sqrt{2d})=A\cap\R$. By Proposition \ref{pr:split}, $({L_n})_{n\geq1}$ yields a standard approximation of ${L}$, which implies that $({L_n + R})_{n\geq 1}$  provides one for $X$. It then follows from standard multiplicative chaos theory that $\eta_{x_0}(\beta)$ coincides with $"\psi_{x_0}\exp(\beta X)"$.

		In order to complete the proof of the claim we made in the beginning of the proof, it remains to prove \eqref{eq:yks}. To that end, by translation invariance we may obviously replace $U_{x_0}$  by the unit cube $[0,1)^d\subset\R^d$ and assume that $\psi\in L^\infty ((0,1)^d)$. Set $k_n:=2\lceil e^n/2\rceil$ so that $k_n\sim e^n$ is even and larger than $e^n$, and divide $[0,1)^d$ into  $(k_n)^d$ copies  of the small cube $[0,1/k_n)^d$, call them $Q_j$, $j=(j_1,\ldots j_d)\in \{1,\ldots ,k_n\}^d.$ Let $\mathcal{A}_n\subset \{1,\ldots ,k_n\}^d$ consist of those $d$-tuples whose all components are odd, so that $\{1,\ldots ,k_n\}^d=\bigcup_{r\in \{0,1\}^d}\bigcup_{j\in \mathcal{A}_n}\{j+r\}$. Our aim is to prove exponential decay for the quantity
		\begin{align*}
		M_n&:= \E \Big| \int_{[0,1)^d} \psi(x)\big(\nu_{n+1,\beta}(x)-\nu_{n,\beta}(x)\big)dx \Big|^p.
		\end{align*}
		We note that by Proposition \ref{pr:split} (i) the fields {$\Delta_n(\cdot)=L_{n+1}-L_n$}  and $\nu_{n,\beta}(\cdot)$ are independent. In order to simplify notation, we assume that our probability space is of product form  $\Omega=\Omega'\times \Omega'', $ $\P=\P'\times\P''$, and the fields $\nu_{n,\beta}$ depend only on $\omega'$, and {$\Delta_n$}  on $\omega''$. We also note that {$\E(\Delta_n(x)^2)=1$} for any $x$. Moreover, by Lemma \ref{le:cascadelike}, the restrictions of the field {$\Delta_n$}  to different cubes $Q_j$ and $Q_{j'},$ $j,j'\in \mathcal{A}_n$ are independent. Since $[0,1)^d$ may be expressed as the disjoint union of $2^d$  translates of the set $\bigcup_{j\in \mathcal{A}_n} Q_j$ , we may use the von Bahr--Esseen inequality \cite{BE} to estimate 
		\begin{align*}
		M_n&\leq  \E 2^{d(p-1)}\sum_{r\in\{0,1\}^d}\Big| \int_{\bigcup_{j\in \mathcal{A}_n} Q_{j+r}} \psi(x)\big(\nu_{n+1,\beta}(x)-\nu_{n,\beta}(x)\big)dx \Big|^p \\
		&= 2^{d(p-1)}\sum_{r\in\{0,1\}^d}\E_{\omega'}\left(\E_{\omega''} \Big| \int_{\bigcup_{j\in \mathcal{A}_n} Q_j} \psi(x)\nu_{n,\beta}(\omega',x)\big(e^{\beta \Delta_{n}(\omega'',x)-\beta^2/2}-1\big)dx \Big|^p\right) \\
		&\le C_p2^{d(p-1)}\sum_{j\in\{1,\dots,k_n\}^d}\E_{\omega'}\left(\E_{\omega''} \Big| \int_{Q_j} \psi(x)\nu_{n,\beta}(\omega',x)\big(e^{\beta \Delta_{n}(\omega'',x)-\beta^2/2}-1\big)dx \Big|^p\right)\\
		&=: C_p2^{d(p-1)}\sum_{j\in\{1,\dots,k_n\}^d} M'_n(j).
		\end{align*}
		We may use H\"older's inequality and translation invariance to bound 
		\begin{align*}
		M'_n(j) &\leq  \|\psi\|_{L^\infty([0,1)^d)}^p(k_n)^{-dp}\E_{\omega'}|\nu_{n,\beta}(\omega',0)|^p\E_{\omega''}\big| e^{\beta \Delta_{n}(\omega'',0)-\beta^2/2}-1\big|^p\\
		&\lesssim  \|\psi\|_{L^\infty([0,1)^d)}^pe^{-ndp}\exp (n(p^2-p)({\rm Re\, }\beta)^2/2+np({\rm Im\, }\beta)^2/2).
		\end{align*} 
		Putting the above estimates together, it follows that $M_n\lesssim \exp( c_\beta n)$, where $c_\beta :=(p^2-p)({\rm Re\, }\beta)^2/2+p({\rm Im\, }\beta)^2/2-d(p-1)$. We may choose $p>1$ so that this quantity is negative assuming that 
		$$
		({\rm Re\, }\beta)^2+\frac{1}{p-1}({\rm Im\, }\beta)^2 <\frac{2d}{p}.
		$$
		One easily checks for each $\beta_0 \in A$ we may choose $p>1$ so that above inequality is satisfied in a neighbourhood $B(\beta_0,\delta)$ of the point $\beta_0$. Finally, the obtained exponential decay of the increments clearly yields \eqref{eq:yks}.
		
		In order to finish the proof of the theorem, we may pick a cover of  $U$ by  neighbourhoods $U_j:=B(x_j,\varepsilon_{x_j})$, $j=1,\ldots$ (here $x_j$ replaces $x_0$ above) so that each compact subset of $U$ intersects only finitely many of the neighbourhoods $U_j$. We choose the related elements $\psi_{x_j}\in C^\infty_c(U_j)$  so that they form a partition of unity in $U$, and form the $H^{-d}(\R^d)$ valued random variables $\eta_{x_j}$ as above. By construction, the random $H^{-d}_{loc}(U)$-valued analytic function on $A$
		$$
		\mu_\beta:=\sum_j\eta_{x_j}(\beta)
		$$
		has all the properties stated in the theorem. Only one thing perhaps needs to be discussed: that  for each fixed $j\geq 1$ the analytic continuation $\eta_{x_j}(\beta)$ is a.s. $\sigma (X)$-measurable, i.e.  a.s. it is a function of the original field $X$. Let $h:A\to\D$ be the Riemann map that fixes origin and maps $\R\cap A$ onto $\R\cap \D$. Set
		$\widetilde \eta_{x_j}(\beta):=\eta_{x_j}(h^{-1}(\beta)).$ Then we have $\eta_{x_j}(\beta) =\widetilde \eta_{x_j}(\widetilde \beta)$ with $\widetilde \beta:=h(\beta)$, and we may instead consider the map $\widetilde\beta \mapsto \widetilde \eta_{x_j}(\widetilde \beta)$, which is analytic on the unit disc. Hence we have the power series expansion
		$$
		\widetilde\eta_{x_j}(\widetilde\beta)=\sum_{k=0}^\infty A_k\widetilde \beta^k,
		$$
		which converges locally uniformly in $H^{-d}(\R^d)$. The coefficients $A_k$ are $H^{-d}(\R^d)$-valued random variables, which a.s. can be computed as 
		\begin{eqnarray*}
			A_k&=&(k!)^{-1}\left[\big(\frac{d}{d\widetilde \beta}\big)^k \widetilde\eta_{x_j}(\widetilde \beta)\right]_{\widetilde\beta =0}
			\:=\;(k!)^{-1}\lim_{m\to\infty} m^k\Big(\sum_{j=0}^k (-1)^{k-j}{ k\choose j} \widetilde\eta_{x_j}(j/m)\Big)\\
			&=&(k!)^{-1}\lim_{m\to\infty} m^k\Big(\sum_{j=0}^k (-1)^{k-j}{ k\choose j}\eta_{x_j}(h^{-1}(j/m))\Big).
		\end{eqnarray*}
		This yields what we wanted, since a.s. all the values  $\eta_{x_j}(h^{-1}(j/m))$ are obtained as in the standard definition of the chaos, and hence are functions of $X$.
	\end{proof}

	\begin{remark}\label{rem:complex}
		The above theorem does not take any stand on what is the optimal Sobolev regularity of the complex chaos $\mu_\beta$, and this will be one of the topics of a sequel to the present paper. \enddef
	\end{remark}
	
	\section{Generalized Onsager inequalities}\label{sec:ichaos}
	
	As the last application of Theorem \ref{th:deco} we shall prove a local  Onsager-type inequality for general log-correlated  fields. As mentioned in the introduction, this result is crucial (see  \cite{JSW}) in order to obtain obtain good enough moment  bounds  for imaginary chaos in general dimensions,  so that the moments determine the chaos uniquely. The idea of the proof is again that it is simple for $\star$-scale invariant fields and extends to general log-correlated fields through Theorem {B}.
	
	\begin{theorem}\label{th:onsager} Assume that $X$ is a log-correlated  field on a domain $U\subset\R^d$ with $0\in U$ and with the same conditions on the covariance as in Theorem \ref{th:deco}. Then, there is a neighbourhood $B_\varepsilon(0)\subset U$  of the origin so that $X$ satisfies an Onsager type inequality in $B_\varepsilon (0)$:\;
		for any
		$n \ge 1$, $q_1, \dots, q_n \in \{-1,1\}$ and distinct $x_1, \dots, x_n \in B_\varepsilon (0)$  it holds that
		\begin{equation}\label{eq:osager}
		-\sum_{1 \le j < k \le n} q_j q_k \E X(x_j) X(x_k) \le \frac{1}{2} \sum_{j=1}^n  \log \frac{1}{\frac{1}{2} \min_{k \neq j} |x_j - x_k|} + C n,
		\end{equation}     
		where $C$ is independent of the points $x_j$ or $n$, but may depend on the neighbourhood $B(0,\varepsilon).$
	\end{theorem}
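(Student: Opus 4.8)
The plan is to reduce \eqref{eq:osager} to the analogous inequality for an almost $\star$-scale invariant field via Theorem~\ref{th:deco}, and then to run on that structured field the classical hierarchical (cascade) proof of Onsager's inequality, using the almost-cascade independence recorded in Lemma~\ref{le:cascadelike}. Concretely, I would first fix a seed covariance $k\colon\R^d\to\R$ that is smooth, rotation invariant, supported in $B(0,1)$, equal to $1$ at the origin, positive definite, and with $\int_{\R^d}k>0$ (e.g.\ a normalised autocorrelation of a nonnegative radial bump supported in $B(0,1/2)$); such a $k$ meets all the hypotheses of Theorem~\ref{th:deco}. Applying that theorem, for all sufficiently small $\varepsilon\in(0,1/2)$ with $\overline{B(0,\varepsilon)}\subset U$ we may write $X=L+R$ on $B(0,\varepsilon)$ with $L$ and $R$ independent, $R$ almost surely Hölder continuous, and $L$ the field of Proposition~\ref{pr:split} built from the seed $k(\lambda_0\,\cdot)$ for some $\lambda_0\ge1$. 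Since \eqref{eq:osager} concerns only the (off-diagonal continuous) covariance kernel, and $C_X=C_L+C_R$ on $B(0,\varepsilon)\times B(0,\varepsilon)$, it then suffices to establish, for distinct $x_1,\dots,x_n\in B(0,\varepsilon)$ and $q_j\in\{-1,1\}$, writing $m_j:=\min_{k\neq j}|x_j-x_k|$, the two bounds
\[
-\sum_{j<k}q_jq_k\,C_R(x_j,x_k)\le Cn
\qquad\text{and}\qquad
-\sum_{j<k}q_jq_k\,C_L(x_j,x_k)\le\tfrac12\sum_{j=1}^n\log\tfrac{1}{m_j}+Cn,
\]
since $\log\frac{1}{m_j}\le\log\frac{1}{\frac12 m_j}$ and $C$ is allowed to depend on $\varepsilon$.

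The first bound is immediate from positive definiteness of $C_R$: one has $\sum_{j,k}q_jq_k C_R(x_j,x_k)=\E(\sum_j q_j R(x_j))^2\ge0$, so $-\sum_{j<k}q_jq_k C_R(x_j,x_k)\le\tfrac12\sum_j C_R(x_j,x_j)$, which is $O(n)$ because $C_R$ is continuous on the compact $\overline{B(0,\varepsilon)}$. For the second bound I would use the increment decomposition $L=\sum_{\ell\ge0}\Delta_\ell$ with $\Delta_\ell:=L_{\ell+1}-L_\ell$, which consists of independent summands by Proposition~\ref{pr:split}(i), and whose covariance follows from \eqref{eq:covLR} (applied to the seed $k(\lambda_0\,\cdot)$) by telescoping: $\E\Delta_\ell(x)\Delta_\ell(y)=\int_\ell^{\ell+1}k\big(\lambda_0 e^u(x-y)\big)(1-e^{-\delta u})\,du$. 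Hence $\E\Delta_\ell(x)^2=\int_\ell^{\ell+1}(1-e^{-\delta u})\,du\le1$; since $k$ is supported in $B(0,1)$, Lemma~\ref{le:cascadelike} gives $\E\Delta_\ell(x)\Delta_\ell(y)=0$ whenever $|x-y|\ge e^{-\ell}$; and $C_L(x,y)=\sum_{\ell\ge0}\E\Delta_\ell(x)\Delta_\ell(y)$ for $x\neq y$, a sum with only finitely many nonzero terms.

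The heart of the argument is a scale-by-scale electrostatic estimate. Fixing $\ell$, I would form the graph on $\{1,\dots,n\}$ having an edge $jk$ whenever $\E\Delta_\ell(x_j)\Delta_\ell(x_k)\neq0$; every such edge has $|x_j-x_k|<e^{-\ell}$, and covariances between distinct connected components vanish, so summing the elementary identity $\sum_{j<k,\,j,k\in V}q_jq_k\E\Delta_\ell(x_j)\Delta_\ell(x_k)=\tfrac12\E\big(\sum_{j\in V}q_j\Delta_\ell(x_j)\big)^2-\tfrac12\sum_{j\in V}\E\Delta_\ell(x_j)^2$ over the connected components $V$ and dropping the (vanishing) singletons yields $-\sum_{j<k}q_jq_k\E\Delta_\ell(x_j)\Delta_\ell(x_k)\le\tfrac12\sum_{j:\,m_j<e^{-\ell}}\E\Delta_\ell(x_j)^2$. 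Summing over $\ell\ge0$ and interchanging the (effectively finite) sums gives
\[
-\sum_{j<k}q_jq_k\,C_L(x_j,x_k)\le\tfrac12\sum_{j=1}^n\ \sum_{\ell\ge0,\ \ell<\log(1/m_j)}\int_\ell^{\ell+1}(1-e^{-\delta u})\,du\le\tfrac12\sum_{j=1}^n\Big(\log\tfrac{1}{m_j}+1\Big),
\]
which is the required second bound. Combining the two estimates proves \eqref{eq:osager}.

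I expect the one genuinely delicate point to be the bookkeeping that produces the sharp constant $\tfrac12$ in front of $\sum_j\log(1/m_j)$: one must honestly discard singleton clusters at every scale---so that the number of points ``active'' at scale $\ell$ is $\#\{j:m_j<e^{-\ell}\}$, not $n$---and one must use $\E\Delta_\ell(x)^2\le1$ rather than merely an $O(1)$ bound. This is precisely the combinatorial mechanism underlying Onsager's inequality for multiplicative cascades; the role of Theorem~\ref{th:deco} is to transplant it to an arbitrary log-correlated field, the Hölder error field $R$ contributing only the harmless additive $O(n)$ term.
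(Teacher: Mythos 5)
Your proposal is correct, and its overall architecture coincides with the paper's: decompose $X=L+R$ on a small ball via Theorem~\ref{th:deco} with a seed covariance supported in $B(0,1)$, dispose of $R$ by positive definiteness and local boundedness of $C_R$ (an $O(n)$ term), and then prove the logarithmic bound for the almost $\star$-scale invariant part $L$. Where you differ is in the estimate for $L$: the paper does it in one stroke by replacing each $L(x_j)$ with the truncated variable $G_j:=L_{\log(1/r_j)}(x_j)$, $r_j=\tfrac12\min_{k\neq j}|x_j-x_k|$, noting via Lemma~\ref{le:cascadelike} that this leaves all off-diagonal covariances unchanged, and then using the single inequality $\E\bigl(\sum_j q_jG_j\bigr)^2\ge 0$ together with $\E G_j^2\le\log(1/r_j)$ from \eqref{eq:covLR}. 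You instead decompose $L$ into unit-scale increments $\Delta_\ell=L_{\ell+1}-L_\ell$, use the support of the (dilated) seed to see that $\E\Delta_\ell(x_j)\Delta_\ell(x_k)$ vanishes at distances $\ge e^{-\ell}$, and run a connected-component positivity argument at each scale, counting for each point the scales at which it is non-isolated; summing over scales reproduces the $\tfrac12\sum_j\log(1/m_j)+O(n)$ bound (your constant differs from the paper's only in the harmless $O(n)$ term, since you work with $m_j$ rather than $\tfrac12 m_j$). Both arguments are sound and rest on the same structural inputs (Theorem~\ref{th:deco}, the covariance formula \eqref{eq:covLR}, and the finite-range property behind Lemma~\ref{le:cascadelike}); the paper's point-dependent truncation is shorter and avoids the cluster bookkeeping, while your scale-by-scale version makes the cascade mechanism explicit and would adapt directly to increment-based or hierarchical approximations.
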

	\begin{proof} Let $B(0,\varepsilon)$ be a neighbourhood for which  a decomposition $X={L+R}$, given by Theorem \ref{th:deco}(ii), is valid, obtained by some allowed seed covariance function $k$ that is supported on $B(0,1)$ (observe that  any dilatation $k(\lambda_0\cdot)$ is then also supported in $B(0,1)$ for $\lambda_0\geq 1)$. Especially, Lemma \ref{le:cascadelike} applies to the field ${L}$. By independence, it is obviously enough to prove the result separately for both of the fields ${L}$ and ${R}$. Since $C_{{R}}$ is locally bounded, say $|C_{{R}}(x,y)|\leq A$  for $x,y\in B(0,\varepsilon),$ we obtain 
		$$
		-\sum_{1 \le j < k \le n} q_j q_k \E {R}(x_j) {R}(x_k) =-\frac{1}{2}\E \big|\sum_{j=1}^nq_j {R}(x_j)\big|^2+\frac{1}{2}\sum_{j=1}^n\E {R}(x_j)^2\leq nA/2.
		$$
		In turn, to treat the contribution of ${L}$ we may assume that $\varepsilon <1/2$ and denote for each $j\in\{1,\ldots, n\}$ half of the shortest distance to the neighbours by $r_j:=\frac{1}{2}\min_{k\not=j} |x_k-x_j|.$ Define the variables
		$G_j$ for $j=1\ldots n$ by setting $G_j= {L}_{\log(1/r_j)}$, Lemma~\ref{le:cascadelike} implies for distinct $j,k$ that  
		$$
		\E {L}(x_j) {L}(x_k)= \E G_jG_k.
		$$
		By recalling \eqref{eq:covLR}  it thus follows that 
		$$
		-\sum_{1 \le j < k \le n} q_j q_k \E {L}(x_j){L}(x_k) =-\frac{1}{2}\E \big|\sum_{j=1}^nq_jG_j\big|^2+\frac{1}{2}\sum_{j=1}^n\E G_j^2\leq \frac{1}{2}\sum_{j=1}^n\log(1/r_j).
		$$
		Put together, the claim follows with $C=A/2.$
	\end{proof}

\end{document}